\documentclass[11pt]{article}
\usepackage[a4paper, total={6in, 8in}]{geometry}
\usepackage{enumerate}
\usepackage{amsmath}
\usepackage{amssymb,latexsym}
\usepackage{amsthm}
\usepackage{color}
\usepackage{cancel}
\usepackage{graphicx}
\usepackage{cite}
\usepackage{changes}
\usepackage{lineno}
\usepackage{hyperref}
\usepackage{comment}
\usepackage{amscd}
\usepackage{cleveref}
\usepackage{enumitem}

\DeclareMathOperator{\C}{\mathcal{C}}

\newtheorem{theorem}{Theorem}[section]

\newtheorem{lemma}[theorem]{Lemma}
\newtheorem{corollary}[theorem]{Corollary}
\newtheorem{definition}[theorem]{Definition}
\newtheorem{proposition}[theorem]{Proposition}

\newtheorem{remark}[theorem]{Remark}

\newcommand{\cA}{{\mathcal A}}

\newcommand{\cC}{{\mathcal C}}

\newcommand{\cN}{{\mathcal N}}

\newcommand{\cG}{{\mathcal G}}

\newcommand{\cF}{{\mathcal F}}
\newcommand{\cP}{{\mathcal P}}

\newcommand{\cD}{{\mathcal D}}

\newcommand{\cU}{{\mathcal U}}

\newcommand{\cQ}{{\mathcal Q}}
\newcommand{\cZ}{{\mathcal Z}}

\newcommand{\F}{{\mathbb F}}

\newcommand{\GL}{\hbox{{\rm GL}}}

\newcommand{\fq}{{\mathbb F}_{q}}

\newcommand{\PG}{\mathrm{PG}}

\newcommand{\Fix}{\mathrm{Fix}}

\newcommand{\cc}{\mathbf{c}}
\newcommand{\xx}{\mathbf{x}}
\newcommand{\yy}{\mathbf{y}}

\newcommand{\0}{\mathbf{0}}
\newcommand{\PGL}{\mathrm{PGL}}

\newcommand{\dH}{\mathrm{d}_{\mathrm{H}}}
\newcommand{\wH}{\mathrm{w}_{\mathrm{H}}}
\newcommand{\Osc}{\mathrm{Osc}}

\title{On pseudo-arcs from normal rational curve and additive MDS codes}
\author{Francesco Pavese and Paolo Santonastaso}
\date{}

\begin{document}

\maketitle

\begin{abstract}
Let $\PG(k-1,q)$ be the $(k-1)$-dimensional projective space over the finite field $\F_q$. An {\em arc} in $\PG(k-1,q)$ is a set of points with the property that any $k$ of them span the entire space. The notion of a {\em pseudo-arc} generalizes that of an arc by
replacing points with higher-dimensional subspaces.
Constructions of pseudo-arcs can be obtained from arcs defined
over extension fields; such pseudo-arcs are necessarily \emph{Desarguesian}, in the sense
that all their elements belong to a Desarguesian spread.
In contrast, genuinely non-Desarguesian pseudo-arcs are far less understood and have
previously been known only in a few sporadic cases.

In this paper, we introduce a new infinite family of non-Desarguesian pseudo-arcs consisting of $(h-1)$-dimensional projective subspaces of
$\PG(hk-1,q)$ based on the imaginary spaces of a normal rational curve.
We determine the size of the constructed pseudo-arcs explicitly and show that, by adding
suitable osculating spaces of a normal rational curve defined over a subgeometry, we obtain pseudo-arcs of size $O(q^h)$.
As $q$ grows, these sizes asymptotically attain the classical upper bound for pseudo-arcs
established by J.~A.~Thas in \cite{Thas1971}, thereby showing that this bound is essentially sharp also in the
non-Desarguesian setting. We further investigate the interaction between these new pseudo-arcs and quadrics.
While Desarguesian pseudo-arcs from normal rational curve are complete intersections of quadrics, we prove that the
new pseudo-arcs are not contained in any quadric of the
ambient projective space. Finally, we translate our geometric results into coding theory.
We show that the new pseudo-arcs correspond precisely to recent families of additive MDS
codes introduced via a polynomial framework in \cite{neri2025skew}. As a consequence of their non-Desarguesian nature, we prove that these codes are not
equivalent to linear MDS codes.
Moreover, the extension of pseudo-arcs via osculating spaces yields longer families of
additive MDS codes.

\end{abstract}


\noindent
\textbf{Keywords:} pseudo-arc; normal rational curve; MDS code; additive code.\\
\textbf{MSC2020:}  51E20; 94B27; 11T71.
\section{Introduction}


Let $\F_{q}$ be the finite field of order $q$, where $q$ is a prime power. We denote by $\PG(k-1, q)$ the $(k-1)$-dimensional projective space over $\F_{q}$. 
An {\em arc} in $\PG(k-1,q)$ is a set of points with the property that any $k$ of them span the entire space.  
These objects have a long and rich history, dating back to the foundational works of B. Segre
\cite{segre1955curve,segre1955ovals,segre1959geometrie,segre1967introduction}
and they have played a central role in the development of finite geometry. Also, one of the main reasons for the long-standing interest in arcs is their deep connection with coding theory: arcs can be regarded as the geometric counterparts of linear maximum distance separable (MDS) codes. We refer to
\cite{ball2020arcs} for a comprehensive survey of classical and modern results on arcs.

The notion of arc was later generalized to that of a \emph{pseudo-arc}.
The systematic study of pseudo-arcs goes back to the seminal paper of J. A. Thas \cite{Thas1971}, where these objects were introduced and investigated in terms of arcs in projective spaces over matrix algebras. A \emph{pseudo-arc} in $\PG(hk-1,q)$ is a family of $(h-1)$-dimensional projective spaces
with the property that any $k$ of them span the entire space.
When $h=1$, this definition reduces to the classical notion of an arc in a projective space.
Over the years, pseudo-arcs have attracted growing attention, both for their intrinsic geometric interest and for their connections with other combinatorial and geometric structures. A particularly well-studied case is that of $k=3$.
When a pseudo-arc in $\PG(3h-1,q)$ has size $q^h+1$, it is called a \emph{pseudo-oval}. In this context, pseudo-arcs arise in the study of generalized quadrangles, Laguerre planes and orthogonal arrays, see, for instance,
\cite{penttila2013extending,thas2006translation,VR,ThasSurvey}.

More recently, pseudo-arcs have attracted renewed attention due to their role in coding theory, and in particular in the study of additive codes in the Hamming metric. Additive codes extend the class of linear codes by weakening the linearity assumption on the alphabet. Additive codes in the Hamming metric can also be interpreted as linear codes equipped with the so-called \emph{folded Hamming distance}, see e.g. \cite{martinez2025linear}. From a computer science perspective, codes of this type have been investigated in several contexts, including byte error correction \cite{etzion1998perfect} and the construction of low-density MDS codes \cite{blaum2002lowest,xu1999low}. In recent years, additive codes have also received considerable attention due to their applications in the construction of quantum error-correcting codes; see, for instance,
\cite{dastbasteh2024new,dastbasteh2025polynomial,grassl2021algebraic,ball2023additive,ball2023quantum}.

Additive codes can be studied from a geometric perspective. In \cite{blokhuis2004small}, it was shown that additive codes over $\F_4$ admit a geometric description in terms of multisets of lines in a projective space over $\F_2$. More generally, in \cite{adriaensen2023additive} and \cite{ball2023additive}, the authors proved that additive codes are in one-to-one correspondence with multisets of subspaces in finite projective spaces, thus extending the classical correspondence between linear codes and multisets of points. Further properties of additive codes have been recently investigated from a geometric point of view in \cite{d2025generalized,bartoli2025long,martinez2025linear,krotov2025generalized}.

In this work, we focus on pseudo-arcs in $\PG(hk-1,q)$ and their related additive MDS codes. 


\subsection{Our contribution}

A way to construct pseudo-arcs in $\PG(hk-1,q)$ is by using arcs defined over an extension field. For instance, the normal rational curve in $\PG(k-1,q^h)$ gives rise to a
pseudo-arc of size $q^h+1$ in $\PG(hk-1,q)$.
However, pseudo-arcs obtained in this way have the property that their elements belong to a Desarguesian $(h-1)$-spread of $\PG(hk-1,q)$; we will refer to these types of pseudo-arcs as {\em Desarguesian}. For the definition and properties of a Desarguesian spread, see Section~\ref{sec:Des}.
In this paper, we are primarily interested in pseudo-arcs in $\PG(hk-1, q)$ that are 
\emph{non-Desarguesian}, that is, not contained in any Desarguesian $(h-1)$-spread of $\PG(hk-1, q)$.

Non-Desarguesian pseudo-arcs have so far been studied only in a few small cases, in connection with non-Desarguesian spreads and partial spreads, or
 for $(q,h)\in\{(2,2),(2,3),(3,2),(4,2)\}$  and specific values of $k$ via coding-theoretic
approaches; see \cite{ball2023additive}.
A fundamental result in this area, due to J. A. Thas, provides an upper bound on the size
of a pseudo-arc $\mathcal{A}$ in $\PG(hk-1,q)$:
\begin{equation} \label{eq:boundthas}
    |\mathcal{A}| \le \begin{cases} q^h + k & \mbox{ if } q \mbox{ is even,} \\ q^h + k - 1 & \mbox{ if } q \mbox{ is odd,} \end{cases}
\end{equation}
see \cite[Theorem 4.11]{Thas1971}.
This bound serves as a natural benchmark for the construction of large pseudo-arcs. The first main contribution of this paper is the introduction of a new family of
pseudo-arcs in $\PG(hk-1,q)$ constructed from the imaginary points of a normal rational
curve $\cN_{hk, q^h}$ defined over the extension field $\F_{q^h}$, see \Cref{sec:explicitconstruction}.
Although the construction originates from a normal rational curve, we show that the
resulting pseudo-arc, namely $\mathcal{P}_{h,k,q}$, behaves in a fundamentally different way from the classical
Desarguesian ones.
More precisely, by studying the action of a subgroup $H$ of $\PGL(hk,q)$ acting
$3$-transitively on the normal rational curve in a suitable subgeometry, and by using
recent results connecting Desarguesian spreads and Segre varieties \cite{Sheekeyetal}, we prove that this pseudo-arc is not contained in any Desarguesian $(h-1)$-spread of $\PG(hk-1, q)$.
Consequently, it is not equivalent to a Desarguesian pseudo-arc 
in $\PG(hk-1,q)$, see Section~\ref{sec:nondesarguesian}.

In Section \ref{sec:extending}, we prove that $\mathcal{P}_{h,k,q}$ can be extended by adding the
$(h-1)$-osculating spaces of the normal rational curve $\cN_{hk, q} = \cN_{hk, q^h} \cap \PG(hk-1, q)$. 
This yields a pseudo-arc of size
\[
|\Lambda_{h,q}| + q + 1 = \mathcal{O}(q^h),
\]
where $\Lambda_{h,q}$ denotes a set of representatives of the orbits of size $h$ of $\F_{q^h}$ under the $q$-Frobenius automorphism; as $q$ grows, this size asymptotically attains the upper bound in \eqref{eq:boundthas}. As a consequence, we obtain the largest known family of non-Desarguesian pseudo-arcs
and show that the bound in \eqref{eq:boundthas} is essentially sharp also in the non-Desarguesian setting.

A further contribution concerns the interaction between pseudo-arcs and quadrics, see \Cref{sec:quadrics}.
A normal rational curve is known to be a complete intersection of quadrics, and the related 
Desarguesian pseudo-arc inherits this property. On the other hand, we prove that $\mathcal{P}_{h,k,q}$ is not contained in any quadric of the ambient projective
space.
In particular, it cannot arise as intersections of quadrics.

Finally, in Section \ref{sec:coding}, we translate our geometric constructions into coding-theoretic results.
We prove that the new pseudo-arcs are the geometric counterparts of additive MDS codes
recently constructed via a polynomial framework \cite{neri2025skew}.
Since the pseudo-arcs are non-Desarguesian, it follows that the associated additive MDS
codes are not equivalent to linear MDS, see Section \ref{sec:correspondenceadditive}. 
Moreover, by extending the pseudo-arcs via osculating spaces, we obtain larger families of additive MDS codes, see Section \ref{sec:codeextension}.

\section{Preliminaries}

In this section, we recall the notions and results that will be used throughout the paper. In particular, we review the basic properties of the normal rational curve. Moreover, we recall the notion of Desarguesian spread in a projective space.

\subsection{Normal rational curve and osculating spaces}

An \textbf{arc} in $\PG(k-1,q)$ is a set of points with the property that any $k$ of them span the entire space. 
 One of the most prominent examples of an arc in a projective space is the \emph{normal rational curve}.

\begin{definition}
A \textbf{normal rational curve} $\mathcal{N}_{k,q}$ over $\F_q$ is the set of points in $\PG(k-1,q)$ defined as
\begin{equation} \label{eq:generalrationalcurve}
\bigl\{\,\langle (g_0(u,t),g_1(u,t),\ldots,g_{k-1}(u,t)) \rangle_{\F_q} : u,t \in \F_q, \ (u,t) \neq (0,0) \,\bigr\},
\end{equation}
where $g_0(x,y),g_1(x,y),\ldots,g_{k-1}(x,y) \in \F_q[x,y]$ form a basis for the space of homogeneous polynomials in $\F_q[x,y]$ of degree $k-1$. 
\end{definition}


\noindent A normal rational curve in $\PG(k-1,q)$ consists of $q+1$ points and, when $q \geq k-1$, it forms an arc of $\PG(k-1,q)$, see, e.g., \cite[Theorem 6.30]{HT}. If the condition $q \geq k-1$ is not satisfied, then the normal rational curve contains fewer than $k$ points. Consequently, $\mathcal{N}_{k,q}$ spans at most a hyperplane of $\PG(k-1,q)$ and in this case, $\mathcal{N}_{k,q}$ defines an arc in the subspace spanned by $\mathcal{N}_{k,q}$ itself. For further basic properties of the normal rational curve, we refer the reader to \cite{ball2020arcs,harris2013algebraic,HT}.

In view of \eqref{eq:generalrationalcurve}, the normal rational curve $\mathcal{N}_{k,q}$, can be regarded as the image in
$\PG(k-1,q)$ of the projective line $\PG(1,q)$ under the Veronese
embedding
\begin{equation} \label{eq:polynomialstandardNRC}
    \nu_{k,q}\colon \langle (u,t) \rangle_{\F_q}
    \longmapsto
    \big\langle \big(g_0(u,t),g_1(u,t),\ldots,g_{k-1}(u,t)\big) \big\rangle_{\F_q},
\end{equation}
where \[g_i(x,y)=x^{k-1-i}y^{i} \in \F_q[x,y],\] for every $i\in\{0,\ldots,k-1\}$. We will also denote the image
$
\nu_{k,q}\big(\langle(u,t)\rangle_{\F_q}\big)
$
simply by $\nu_{k,q}(u,t)$ whenever no confusion can arise.
For $j\ge 1$, let
\[
\frac{\partial^{\,j} g_i}{\partial x^{\,j}}
\quad\text{and}\quad
\frac{\partial^{\,j} g_i}{\partial y^{\,j}}
\]
denote the usual $j$--th partial derivatives of the polynomials
$g_i(x,y)$. When $\mathrm{char}(\F_q)=p>j$ and $k-1 > j$, not all coefficients of the partial derivatives are zero.  We can then define the $j$-th derivative of the
parametrization with respect to $x$ and $y$ by
\[
\frac{\partial^{\,j}\nu_{k,q}}{\partial x^{\,j}}(x,y)
:=
\big\langle
\big(\tfrac{\partial^{\,j}}{\partial x^{\,j}} g_0(x,y),
\dots,
\tfrac{\partial^{\,j}}{\partial x^{\,j}} g_{k-1}(x,y)\big)
\big\rangle_{\F_q},
\]
and 
\[
\frac{\partial^{\,j}\nu_{k,q}}{\partial y^{\,j}}(x,y)
:=
\big\langle
\big(\tfrac{\partial^{\,j}}{\partial y^{\,j}} g_0(x,y),
\dots,
\tfrac{\partial^{\,j}}{\partial y^{\,j}} g_{k-1}(x,y)\big)
\big\rangle_{\F_q},
\]
respectively. Each point $\langle (u,t) \rangle_{\F_q} = \langle (1, \frac{t}{u}) \rangle_{\F_q}$, $u \ne 0$, is determined by $\frac{t}{u} \in \F_q$. Therefore, each point of $\PG(1, q)$ is represented by a single element of $\F_q \cup \{\infty\}$, where $\langle (0,1) \rangle_{\F_q}$ corresponds to $\infty$. 
Hence the coordinate functions become $g_i(1,t)=t^i$, and 
the curve $\mathcal{N}_{k,q}$ can also be written as
\begin{equation} \label{eq:standardnormalrationalcurve}
\bigl\{\,\langle (1,t,\ldots,t^{k-1}) \rangle_{\F_q} : t \in \F_q \,\bigr\} \cup \{\,\langle (0,\ldots,0,1)\rangle_{\F_q}\,\}.
\end{equation}
Note that the $j$--th derivative with respect to $t$ is
\[ \frac{\mathrm d^{\,j}}{\mathrm d t^{\,j}} t^i
=
i(i-1)\cdots(i-j+1)\,t^{\,i-j} =\prod_{\ell=0}^{j-1}(i-\ell)\,t^{\,i-j}.
\]
Since $p>j$, no coefficient in this product vanishes in $\F_q$ whenever
$i\ge j$. Hence the $j$--th derivative of the Veronese map
$\nu_{k,q}$ at the point $\langle(1,t)\rangle_{\F_q}$ is
\begin{equation} \label{eq:generatoroscatt}
\frac{\partial^{\,j}}{\partial t^{\,j}}\nu_{k,q}(1,t)
=
\big\langle
\big(
\underbrace{0,\ldots,0}_{j\ \text{zeros}},
\prod_{\ell=0}^{j-1}(j-\ell),
\prod_{\ell=0}^{j-1}(j+1-\ell)\,t,
\prod_{\ell=0}^{j-1}(j+2-\ell)\,t^{2},
\ldots,
\prod_{\ell=0}^{j-1}(k-1-\ell)\,t^{\,k-1-j}
\big)
\big\rangle_{\F_q},
\end{equation}
where the first $j$ entries vanish because the $j$--th derivative of
$t^i$ is zero for $i<j$. Therefore, the formulas above describe the derivatives of the Veronese map at every point of the normal rational curve, except at the point $\langle (0,\ldots,0,1) \rangle_{\F_q}$, that is described below.
The $j$--th derivative of the Veronese map at the point $\langle (u, 1) \rangle_{\F_q}$ 
is
\[
\frac{\partial^{\,j}}{\partial u^{\,j}}\nu_{k,q}(u,1)
=
\langle \bigr(
\prod_{\ell=0}^{j-1}(k-1-\ell)\,u^{\,k-1-j},
\;
\prod_{\ell=0}^{j-1}(k-2-\ell)\,u^{\,k-2-j},
\;
\dots,
\;
\prod_{\ell=0}^{j-1}(j-\ell)\,u^{\,0},
\;
\underbrace{0,\ldots,0}_{j}
\bigl)\rangle_{\F_q}.
\]
Evaluating at $u=0$, the first $k-1-j$ entries vanish. Therefore,
\begin{equation} \label{eq:generatoroscatinfty}
\frac{\partial^{\,j}}{\partial u^{\,j}}\nu_{k,q}(0,1)
=
\langle \bigl(\underbrace{0,\ldots,0}_{k-j-1},1,
\underbrace{0,\ldots,0}_{j}
\bigr) \rangle_{\F_q}.
\end{equation}
Since $p>j$, in both cases each of the sets of points
\[ 
\left\{\nu_{k,q}(1,t),\;
\frac{\partial}{\partial t}\nu_{k,q}(1,t),\;
\dots,\;
\frac{\partial^{\,j}}{\partial t^{\,j}}\nu_{k,q}(1,t) \right\}, \quad \mbox{ with }t \in \F_q,\]
and\[\hskip - 2.4 cm \left\{
\nu_{k,q}(0,1),\;
\frac{\partial}{\partial u}\nu_{k,q}(0,1),\;
\dots,\;
\frac{\partial^{\,j}}{\partial u^{\,j}}\nu_{k,q}(0,1) \right\}
\]
span a $(j-1)$-dimensional projective space of $\PG(hk-1, q)$. So, in view of the discussion above, we are now ready to define the
osculating spaces of a normal rational curve.

\begin{definition}
Assume that the characteristic of the field is $\mathrm{char}(\F_q)=p>j$, for $j\in\{1,\ldots,k-2\}$
and consider the normal rational curve
\[
\mathcal{N}_{k,q}
=\bigl\{\nu_{k,q}(u,t)\;:\;
\langle(u,t)\rangle_{\F_q}\in\PG(1,q)\bigr\} \subseteq \PG(k-1,q).
\]
For every point
$\nu_{k,q}(1,t)$ with $t\in\F_q$, the \textbf{osculating space of order $j$ at}
$\nu_{k,q}(1,t)$ is the $j$-dimensional projective space
\[
\mathrm{Osc}_j\bigl(\nu_{k,q}(1,t)\bigr)
:=
\big\langle
\nu_{k,q}(1,t),
\frac{\partial}{\partial t}\nu_{k,q}(1,t),
\dots,
\frac{\partial^{\,j}}{\partial t^{\,j}}\nu_{k,q}(1,t)
\big\rangle,
\]
whereas the \textbf{osculating space of order $j$ at} $\nu_{k,q}(0,1)$ is the $j$-dimensional projective space
\[
\mathrm{Osc}_j\bigl(\nu_{k,q}(0,1)\bigr)
:=
\big\langle
\nu_{k,q}(0,1),
\frac{\partial}{\partial u}\nu_{k,q}(0,1),
\dots,
\frac{\partial^{\,j}}{\partial u^{\,j}}\nu_{k,q}(0,1)
\big\rangle.
\]
\end{definition}

\noindent When $j=1$, the osculating space of order 1 at a point $P$ is simply the tangent line to the curve at $P$. Also, for every point $P$ one
obtains a natural flag of osculating spaces
\[
P \;\subset\; \mathrm{Osc}_1(P) \;\subset\; \mathrm{Osc}_2(P)
\;\subset\; \cdots \;\subset\; \mathrm{Osc}_{k-2}(P)
.
\]

\subsection{Desarguesian spreads} \label{sec:Des}

An {\em $(h-1)$-spread} of the projective space $\PG(hk-1,q)$ is a family $\mathcal{S}$ of mutually disjoint $(h-1)$-dimensional projective spaces such that every point of $\PG(hk-1,q)$ lies in exactly one element of $\mathcal{S}$. An important class of spreads, called \emph{Desarguesian spreads}, has been described in \cite{segre1964teoria} and can be constructed as follows. Embed $\Sigma \simeq \PG(hk-1,q)$ as a $q$-order subgeometry of $\PG(hk-1,q^h)$ in such a way that $\Sigma$ is the set of fixed points 
\[
\Sigma = \Fix(\Psi)
\] 
of a semilinear collineation $\Psi$ of $\PG(hk-1,q^h)$ of order $h$. For each point $P \in \PG(hk-1,q^h)$, define 
\begin{equation} \label{eq:spaceconjugate}
X^*(P) := \langle P, P^{\Psi}, \ldots, P^{\Psi^{h-1}} \rangle_{q^h}.
\end{equation} Note that for any point $P \in \PG(hk-1,q^h)$, the subspace $X^*(P)$ has dimension at most $h-1$. 
\begin{definition} \label{def:imaginarypoints}
We say that $P$ is an \textbf{imaginary point} of $\PG(hk-1,q^h)$ (with respect to $\Sigma$ and $\Psi$) whenever $X^*(P)$ is an $(h-1)$-dimensional projective space. 
\end{definition}
\noindent Let $\Theta$ be a $(k-1)$-dimensional subspace of $\PG(hk-1,q^h)$ such that 
\begin{equation*} 
    \langle \Theta, \Theta^{\Psi}, \ldots, \Theta^{\Psi^{h-1}} \rangle_{q^h} = \PG(hk-1,q^h).
\end{equation*}
It follows that the space $X^*(P)$ is an $(h-1)$-dimensional projective space of $\PG(hk-1,q^h)$, for every point $P \in \Theta$; in other words, $P$ is an imaginary point of $\PG(hk-1,q^h)$.

\begin{lemma}[see \textnormal{\cite[Lemma 1]{lunardon1999normal}}]\label{lm:subspacesubgeo}
Let $\Pi$ be a subspace of $\PG(hk-1,q^h)$. Then $\langle \Pi \cap \Sigma \rangle_{\F_{q^h}} = \langle \Pi \rangle_{\F_{q^h}}$ if and only if $\Pi^{\Psi} = \Pi$.
\end{lemma}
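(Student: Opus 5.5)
We prove the two implications separately, working in coordinates in which $\Sigma$ is the set of points admitting a representative with all coordinates in $\F_q$. After a change of frame we may take $\Psi$ to be $\langle \mathbf{x}\rangle_{q^h} \mapsto \langle \mathbf{x}^{\sigma}\rangle_{q^h}$, where $\sigma\colon a\mapsto a^q$ is applied coordinatewise. Indeed, any semilinear collineation of order $h$ whose fixed points form a $q$-subgeometry is projectively equivalent to this one, and the statement is invariant under such a change of frame. Write $\Pi=\PG(W)$ for an $\F_{q^h}$-subspace $W\le \F_{q^h}^{hk}$, and let $\Psi$ also denote the induced $\sigma$-semilinear bijection of $\F_{q^h}^{hk}$. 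Then $\Pi^\Psi=\PG(W^\sigma)$.

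\emph{($\Rightarrow$)} Suppose $\Pi$ is spanned by $\Pi\cap\Sigma$. Then $W$ has a basis of vectors $v_1,\dots,v_m$ with $v_i\in\F_q^{hk}$. Each $v_i$ is fixed by $\sigma$. Since $\sigma$ is semilinear, $W^\sigma$ is the $\F_{q^h}$-span of $v_1^\sigma,\dots,v_m^\sigma$, which is again $W$. Hence $\Pi^\Psi=\Pi$.

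\emph{($\Leftarrow$)} This is the substantive direction, and it is essentially Galois descent for vector spaces. Assume $W^\sigma=W$ and set $W_0:=W\cap\F_q^{hk}$. Clearly $\langle W_0\rangle_{q^h}\subseteq W$, so we must show $W$ is spanned by $W_0$.

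\begin{enumerate}
\item Consider the trace map $T(w)=w+w^\sigma+\dots+w^{\sigma^{h-1}}$. It maps $W$ into $W_0$, because $W$ is $\sigma$-stable and $T(w)^\sigma=T(w)$.
\item Suppose some linear functional $f$ on $\F_{q^h}^{hk}$ with $\F_q$-coefficients vanishes on $W_0$ but not on $W$. Take $w\in W$ with $f(w)\neq 0$. Since the trace $\F_{q^h}\to\F_q$ is nonzero, we can choose $\lambda\in\F_{q^h}$ with $\mathrm{Tr}(\lambda f(w))\neq0$. Then $\lambda w\in W$, so $T(\lambda w)\in W_0$ by step 1, and $f(T(\lambda w))=\mathrm{Tr}(\lambda f(w))\neq 0$. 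This contradicts $f|_{W_0}=0$.
\item It remains to check that functionals with $\F_q$-coefficients suffice to separate $\langle W_0\rangle_{q^h}$ from $W$. The subspace $\langle W_0\rangle_{q^h}$ is defined over $\F_q$, so its annihilator in $(\F_{q^h}^{hk})^*$ is spanned by $\F_q$-rational functionals. If $W\neq\langle W_0\rangle_{q^h}$, one of these rational functionals is nonzero on $W$ while vanishing on $W_0$, which step 2 rules out. Therefore $W=\langle W_0\rangle_{q^h}$, that is, $\langle\Pi\cap\Sigma\rangle_{q^h}=\Pi$.
\end{enumerate}

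An alternative route to ($\Leftarrow$) is a dimension argument. The solution space $W_0$ of the semilinear fixed-point equation $w^\sigma=w$ in $W$ is an $\F_q$-space satisfying $\dim_{\F_q}W_0=\dim_{\F_{q^h}}W$, and $\F_q$-independent vectors of $\F_q^{hk}$ remain $\F_{q^h}$-independent. The descent argument above avoids invoking Hilbert 90.

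The main obstacle is exactly the reverse implication. The crux is the surjectivity of the trace $\F_{q^h}\to\F_q$ combined with the $\sigma$-stability of $W$, which together produce enough rational vectors inside $W$.
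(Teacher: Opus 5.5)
Your proof is correct and complete. The paper gives no proof of this lemma: it quotes it from \cite{lunardon1999normal} and uses it as a black box, so there is no internal argument to compare with. What you supply is the standard Galois-descent argument, and it makes the statement self-contained. The forward direction is immediate because rational vectors are $\sigma$-invariant. For the converse, the operator $T$ manufactures rational vectors inside a $\sigma$-stable $W$, and the fact that the annihilator of $\langle W_0\rangle_{q^h}$ is spanned by rational functionals rules out $W\neq\langle W_0\rangle_{q^h}$. Your write-up also shows that only two facts are needed: the $\sigma$-fixed vectors of $\F_{q^h}^{hk}$ are exactly those of $\F_q^{hk}$, and $\mathrm{Tr}_{q^h/q}$ is not identically zero.

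Two small remarks.

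\emph{Normalisation of $\Psi$.} Your claim here is slightly overstated. The field automorphism attached to a semilinear map is unchanged under conjugation by projectivities. Hence an order-$h$ semilinear collineation fixing a $q$-subgeometry pointwise is conjugate to $\langle \mathbf{x}\rangle\mapsto\langle \mathbf{x}^{\sigma^j}\rangle$ for some $j$ coprime to $h$, and not necessarily to the case $j=1$. Nothing breaks, because $\sigma^j$ and $\sigma$ generate the same cyclic group, so $\Pi^{\sigma^j}=\Pi$ if and only if $\Pi^{\sigma}=\Pi$. Moreover, for the maps $\Psi$ and $\Psi'$ actually used in the paper one has $j=1$.

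\emph{Steps 2 and 3.} These can be bypassed entirely. Let $\lambda_1,\dots,\lambda_h$ be an $\F_q$-basis of $\F_{q^h}$ and $\mu_1,\dots,\mu_h$ its dual basis with respect to the trace form. Then $x=\sum_j \mu_j\,\mathrm{Tr}_{q^h/q}(\lambda_j x)$ for every $x\in\F_{q^h}$. Applied coordinatewise, this gives $w=\sum_j \mu_j\,T(\lambda_j w)$ for every $w\in W$. Since each $T(\lambda_j w)\in W_0$ by your Step 1, this exhibits $w$ directly as an $\F_{q^h}$-combination of vectors of $W_0$, with no appeal to annihilators.
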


\noindent By construction, for every $P \in \Theta$, the subspace $X^*(P)$ is fixed by $\Psi$, and hence, by Lemma~\ref{lm:subspacesubgeo}, 
\[
X(P) := X^*(P) \cap \Sigma
\]
is an $(h-1)$-dimensional projective space of $\Sigma$. As $P$ varies in $\Theta$, we obtain the set
\[
\mathcal{D}(\Theta) := \{\, X(P) : P \in \Theta \,\},
\]
which consists of $q^{h(k-1)} + q^{h(k-2)} + \cdots + q^h + 1$ mutually disjoint $(h-1)$-dimensional projective spaces of $\Sigma$, and hence forms an $(h-1)$-spread of $\Sigma$. Any $(h-1)$-spread of the projective space $\PG(hk-1,q)$ that is $\PGL$-equivalent to the construction $\mathcal{D}(\Theta)$ is called a \textbf{Desarguesian spread} of $\PG(hk-1,q)$.  The $(h-1)$-dimensional projective spaces 
$
\Theta, \Theta^{\Psi}, \ldots, \Theta^{\Psi^{h-1}}
$
are uniquely determined by the spread $\mathcal{D}(\Theta)$. More precisely, if $\mathcal{D}(\Theta) = \mathcal{D}(\Theta')$ for some $(k-1)$-dimensional subspace $\Theta'$ of $\PG(hk-1,q^h)$, then necessarily $\Theta' = \Theta^{\Psi^i}$ for some $i \in \{0,\ldots,h-1\}$.  
For this reason, the subspaces $\Theta, \Theta^{\Psi}, \ldots, \Theta^{\Psi^{h-1}}$ are referred to as the \textbf{director spaces} of $\mathcal{D}(\Theta)$.  For further details, see \cite{bader2011desarguesian,lunardon1999normal,segre1964teoria}. 

Let $\Sigma'$ be the subgeometry of $\PG(hk-1, q^h)$ isomorphic to $\PG(hk-1, q)$ consisting of the points represented by
\begin{align*}
    & \left(x_1, x_1^q, \dots, x_1^{q^{h-1}}, x_2,x_2^q, \dots, x_2^{q^{h-1}}, \dots, x_k, x_k^q, \dots, x_k^{q^{h-1}}\right), 
\end{align*}
where $x_i \in \F_{q^h}$, for $i = 1, \dots, k$, with $(x_1, \dots, x_k) \ne (0, \dots, 0)$. Let $\Theta$ be the $(k-1)$-dimensional subspace of $\PG(hk-1,q^h)$ given by 
\begin{align*}
    & X_{2+i} = \ldots = X_{h+i} = 0, & i = 0,h,2h, \dots, h(k-1).
\end{align*}
In this case, $\Sigma'$ is the set of fixed points of the semilinear collineation 
$\Psi'$ of $\PG(hk-1,q^h)$ of order $h$, defined by
\[
\Psi': 
\langle (y_1,\ldots,y_h,y_{h+1},\ldots,y_{2h},\ldots,y_{h(k-1)+1},\ldots,y_{hk}) \rangle_{\F_{q^h}} 
\longmapsto \]
\[
\langle (y_{h}^q,y_1^q,\ldots,y_{h-1}^q,y_{2h}^q,y_{h+1}^q,\ldots,y_{2h-1}^q,\ldots,y_{hk}^q,y_{h(k-1)+1}^q\ldots,y_{hk-1}^q) \rangle_{\F_{q^h}}.
\]

\subsubsection{Automorphism group} \label{sec:autdesarg}

Here we briefly recall the subgroup of $\PGL(hk, q)$ leaving invariant the $(h-1)$-Desarguesian spread $\cD(\Theta)$. The projectivities induced by the matrix 
\begin{align*}
\begin{pmatrix}
    A_{11} & A_{12} & \dots & A_{1k} \\
    A_{21} & A_{22} & \dots & A_{2k} \\
    \vdots & \vdots & \ddots & \vdots \\
    A_{k1} & A_{k2} & \dots & A_{kk}
\end{pmatrix},
\end{align*}
where
\begin{align*}
A_{ij} =
\begin{pmatrix}
    a_{ij} & 0 & \dots & 0 \\
    0 & a_{ij}^q & \dots & 0 \\
    \vdots & \vdots &\ddots & \vdots \\
    0 & 0 & \dots & a_{ij}^{q^{h-1}} 
\end{pmatrix},
\end{align*}
for some $A = (a_{ij}) \in \GL(k, q^h)$, fix $\Sigma'$, $\cD(\Theta)$ and each of its director spaces. On the other hand, the projectivity induced by \begin{align*}
\begin{pmatrix}
    B & \0 & \dots & \0 \\
    \0 & B & \dots & \0 \\
    \vdots & \vdots & \ddots & \vdots \\
    \0 & \0 & \dots & B
\end{pmatrix},
\end{align*}
where
\begin{align*}
B =
\begin{pmatrix}
    \0 & I_{h-1} \\
    1 & \0 
\end{pmatrix}
\end{align*}
generates a cyclic group $C_h$ of order $h$ fixing both $\Sigma'$, $\cD(\Theta)$ and permuting the director spaces of $\cD(\Theta)$ in a single orbit. They give rise to a group of projectivities $\cG$ with structure
\begin{align*}
    \frac{\GL(k, q^h)}{Z(\GL(k, q))} \rtimes C_h.
\end{align*}
Actually, it turns out that $\cG$ contains all the projectivities of $\Sigma'$ leaving $\cD(\Theta)$ invariant, see \cite{Dye, VdV}. In particular, the orbit of a director space of $\cD(\Theta)$ under $\cG$ has size $h$.

\subsubsection{Quadrics}

Let $\F$ be a finite field and let $V$ be an $\F$-vector space.  
A quadratic form over $\F$ is a map $\mathcal{Q}\colon V\to\F$ such that
\begin{itemize}
    \item $\mathcal{Q}(\alpha v)=\alpha^2\mathcal{Q}(v)$, for all $\alpha\in\F,\ v\in V$;
\item 
the map $B(u,v)=\mathcal{Q}(u+v)-\mathcal{Q}(u)-\mathcal{Q}(v)$ is bilinear.

\end{itemize}

The set of points $P=\langle v \rangle_{\F}$ of the projective space $\PG(V,\F)$ that satisfy $\mathcal{Q}(v)=0$ is called \emph{quadric} (or quadratic variety) of $\PG(V, \F)$
associated with $\mathcal{Q}$.  
By abuse of notation, we denote both the quadratic form and the corresponding quadric by $\mathcal{Q}$.

Let $\Sigma'$ be the subgeometry of $\PG(hk-1, q^h)$ isomorphic to $\PG(hk-1, q)$ consisting of the points represented by
\begin{align*}
    & \left(x_1, x_1^q, \dots, x_1^{q^{h-1}}, x_2,x_2^q, \dots, x_2^{q^{h-1}}, \dots, x_k, x_k^q, \dots, x_k^{q^{h-1}}\right), 
\end{align*}
where $x_i \in \F_{q^h}$, for $i = 1, \dots, k$, with $(x_1, \dots, x_k) \ne (0, \dots, 0)$. Let $\Theta$ be the $(k-1)$-dimensional subspace of $\PG(hk-1,q^h)$ given by 
\begin{align*}
    & X_{2+i} = \ldots = X_{h+i} = 0, & i = 0,h,2h, \dots, h(k-1).
\end{align*} 
Starting from a quadric  $\mathcal{Q}$ of $\Theta$, it is possible to construct a family of
quadrics of $\Sigma'$ by means of the so--called \emph{trace trick}.  
More precisely, let $\mathrm{Tr}_{q^h/q}$ denote the trace map from $\F_{q^h}$ to $\F_q$ and, for $\alpha\in\F_{q^h}$, define
\[
L_\alpha\colon \F_{q^h} \to \F_{q}, \qquad x\mapsto \mathrm{Tr}_{q^h/q}(\alpha x).
\]
Then, it is easy to check that the composition
$
L_\alpha\circ\mathcal{Q}\colon V\to\F
$
is a quadratic form over $\F_q$. Therefore, to any quadric $\mathcal{Q}$ of $\Theta$ one can naturally associate a system of quadrics $L_\alpha\circ\mathcal{Q}, \alpha\in\F_{q^h}$ of $\Sigma'$. For further details on this construction, see for instance \cite{gill2007polar,lavrauw2015field}. 

A set of projective subspaces is said to be an \emph{intersection of quadrics} if there exists a family of quadrics whose intersection contains each of its subspaces.  
It is called a \emph{complete intersection of quadrics} if that intersection contains no further points. Here, we note that if a point set $\cZ \subset \Theta \simeq \PG(k-1, q^h)$ is intersection of a family of
quadrics, then the members of
\[X(\cZ) = \{X(P) \;:\; P \in \cZ\}\]
are still contained in the intersection of a suitable
family of quadrics. Moreover, $X(\cZ)$ is a complete intersection of quadrics if $\cZ$ does.

\begin{proposition} \label{prop:pseudoarcquadric}
Let $\cZ \subseteq \Theta \simeq \PG(k-1,q^h)$ be an intersection of quadrics of $\Theta$.  
Then $X(\mathcal{Z})$ is an intersection of quadrics of  $\Sigma' \simeq \PG(hk-1,q)$. Moreover, if $\mathcal{Z}$ is a complete intersection of quadrics, then $X(\mathcal{Z})$ is a complete intersection of quadrics as well.
\end{proposition}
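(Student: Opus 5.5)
We prove the statement with the trace trick: each quadric of $\Theta$ gives quadrics of $\Sigma'$ via the maps $L_\alpha$, and points of $\Sigma'$ are identified with their $\F_{q^h}$-coordinate vectors. First we make the correspondence explicit. A point of $\Sigma'$ has coordinates $(x_1,x_1^q,\dots,x_1^{q^{h-1}},\dots,x_k,\dots,x_k^{q^{h-1}})$, so it is determined by $\xx=(x_1,\dots,x_k)\in\F_{q^h}^k\setminus\{\0\}$. Projecting from $\Theta^{\Psi'},\dots,\Theta^{\Psi'^{h-1}}$ onto $\Theta$, i.e.\ keeping the coordinates $X_{1+i}$, $i\in\{0,h,\dots,h(k-1)\}$, sends this point to $P=\langle \xx\rangle_{\F_{q^h}}\in\Theta$. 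We then check that the point lies in $X(P)$. Indeed, $X^*(P)$ is spanned by $P,P^{\Psi'},\dots,P^{\Psi'^{h-1}}$, which have $\xx^{q^j}$ in the $(j+1)$-th slot of each block. Hence $X^*(P)$ consists of the vectors whose $(j+1)$-th slots form $\lambda_j\xx^{q^j}$, and its points in $\Sigma'$ are exactly those with $\F_{q^h}$-coordinate vector $\lambda\xx$, $\lambda\in\F_{q^h}^*$. Thus, as an $\F_q$-vector space, $V=\F_{q^h}^k$ underlies $\Sigma'$, and $X(P)$ is the projectivization of the $\F_q$-subspace $\F_{q^h}\xx$.

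Next, let $\cQ$ be a quadratic form on $\F_{q^h}^k$ defining a quadric of $\Theta$ that contains $\cZ$. For each $\alpha\in\F_{q^h}$, $L_\alpha\circ\cQ$ is a quadratic form over $\F_q$ on $V$, as recalled before the statement. It is a polynomial in the $\F_q$-coordinates, equivalently in the coordinates $X_i$ of $\Sigma'$ after expanding the trace. If $P=\langle\xx\rangle\in\cZ$, then $\cQ(\lambda\xx)=\lambda^2\cQ(\xx)=0$ for all $\lambda$, so $L_\alpha(\cQ(\lambda\xx))=0$. Hence every point of $X(P)$ lies on every quadric $L_\alpha\circ\cQ$. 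Taking all $\cQ$ in the family and all $\alpha$ shows that $X(\cZ)$ is an intersection of quadrics of $\Sigma'$.

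For completeness, suppose $\cZ$ is exactly the common zero set in $\Theta$ of a family $\{\cQ_s\}$. Let a point $\langle \yy\rangle_{\F_q}$ of $\Sigma'$, $\yy\in V$, lie on every $L_\alpha\circ\cQ_s$. Then $\mathrm{Tr}_{q^h/q}(\alpha\cQ_s(\yy))=0$ for all $\alpha\in\F_{q^h}$. Since the trace form is nondegenerate, this forces $\cQ_s(\yy)=0$ for all $s$. Therefore $\langle\yy\rangle_{\F_{q^h}}\in\cZ$, so the point lies in $X(\langle\yy\rangle_{\F_{q^h}})\in X(\cZ)$, and the intersection contains no further points.

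The main obstacle is bookkeeping rather than an idea: we must rigorously identify $X(P)$ with $\F_{q^h}\xx$ in the specific model $\Sigma',\Psi'$. We must also check that $L_\alpha\circ\cQ$, written as a function of the $\F_{q^h}$-coordinates $X_1,\dots,X_{hk}$ restricted to $\Sigma'$, is a genuine quadric of $\Sigma'$ with $\F_q$-coefficients. The planned way to do this is to write $\cQ(\xx)=\sum a_{ij}x_ix_j$ and expand $\mathrm{Tr}(\alpha a_{ij}x_ix_j)=\sum_{l}(\alpha a_{ij})^{q^l}x_i^{q^l}x_j^{q^l}$. This expression is quadratic in the coordinates $X_{(i-1)h+l+1}=x_i^{q^l}$. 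It is $\F_q$-valued on $\Sigma'$, and after the standard base change from $\Sigma'$ to canonical $\F_q$-coordinates it becomes a quadratic form over $\F_q$.
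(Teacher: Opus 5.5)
Your proposal is correct and follows essentially the same route as the paper. The trace-trick quadrics $L_\alpha\circ\cQ$ contain every $X(P)$ with $P\in\cZ$, and nondegeneracy of the trace form gives completeness. Your explicit identification of $X(P)$ with the $\F_q$-subspace $\F_{q^h}\xx$, combined with $\cQ(\lambda\xx)=\lambda^2\cQ(\xx)$, spells out a step the paper leaves implicit when it passes from $(L_\alpha\circ\cQ_i)(v)=0$ to $X(P)\subseteq L_\alpha\circ\cQ_i$.
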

\begin{proof}
Let $\mathcal{Q}_1,\ldots,\mathcal{Q}_{\ell}$ be quadrics of $\Theta \simeq \PG(k-1,q^h)$ such that
$
\mathcal{Z} \subseteq \bigcap_{i=1}^{\ell} \mathcal{Q}_i$. This means that for every point
$
P=\langle v \rangle_{\F_{q^h}} \in \mathcal{Z}$, we have
$
\mathcal{Q}_i(v)=0$, for all  $i=1,\ldots,\ell$. For each $\alpha \in \F_{q^h}^*$, consider the quadratic form $L_\alpha \circ \mathcal{Q}_i$
obtained via the trace $\mathrm{Tr}_{q^h/q}$,  
which defines a quadric of $\Sigma' \simeq \PG(hk-1,q)$.
Since $\mathcal{Q}_i(v)=0$, we also have
\[
(L_\alpha \circ \mathcal{Q}_i)(v) = 0
\qquad \text{for all } \alpha \in \F_{q^h}^*.
\]
Therefore, the $(h-1)$-dimensional projective space $X(P)$ associated with $P$ is contained in
every quadric $L_\alpha \circ \mathcal{Q}_i$, and hence
\begin{equation} \label{eq:containfieldreduction}
X(P) \subseteq \bigcap_{i=1}^{\ell} \bigcap_{\alpha \in \F_{q^h}^*} \bigl( L_\alpha \circ \mathcal{Q}_i \bigr).
\end{equation}
Since this holds for every $P \in \mathcal{Z}$, it follows that $X(\mathcal{Z})$ is an intersection of quadrics of
$\Sigma' \simeq \PG(hk-1,q)$.

Assume now that $\mathcal{Z}$ is a complete intersection of quadrics, that is,
$
\mathcal{Z} = \bigcap_{i=1}^{\ell} \mathcal{Q}_i$.
By the first part of the proof, we have that \eqref{eq:containfieldreduction} holds, for every $P \in \mathcal{Z}$. Let $R=\langle w\rangle_{\F_q}$ be a point of $\Sigma' \simeq \PG(hk-1,q)$ belonging to the right-hand side of \eqref{eq:containfieldreduction}.  
Then $R$ lies on every quadric $L_{\alpha}\circ \mathcal{Q}_i$, for all $\alpha\in\F_{q^h}^*$ and for all $i$. By definition of $L_\alpha$, this yields
\[
\mathrm{Tr}_{{q^h}/q}\!\bigl(\alpha\,\mathcal{Q}_i(w)\bigr)=0
\qquad \text{for all } \alpha\in\F_{q^h}^*.
\]
Since the trace form is non-degenerate, it follows that
$
\mathcal{Q}_i(w)=0,$ and this holds for every $i=1,\ldots,\ell$. Hence the point
$R^*=\langle w \rangle_{\F_{q^h}} \in \Theta \simeq \PG(k-1,q^h)$
belongs to $\bigcap_{i=1}^{\ell} \mathcal{Q}_i=\mathcal{Z}$.  
Consequently, $R \in X(R^*)$, with $R^* \in \mathcal{Z}$, proving that $X(\mathcal{Z})$ is complete intersection of the quadrics $L_{\alpha} \circ \mathcal{Q}_i$, $\alpha \in \F_{q^h}^*$, $i=1,\ldots,\ell$, in $\Sigma' \simeq \PG(hk-1,q)$.
\end{proof}

\section{New constructions of pseudo-arcs} \label{sec:newconstruction}

Recall that a \textbf{pseudo-arc} in $\PG(hk-1,q)$ is a set of $(h-1)$-dimensional projective spaces such that every subset of $k$ of them spans the entire space. In this section, we first present a construction of a pseudo-arc in $\PG(hk-1, q)$, namely $\cP_{h, k, q}$, obtained from the subspaces generated by the imaginary points of a normal rational curve of $\PG(hk-1, q^h)$. 
Next, we prove that $\cP_{h,k, q}$ is not contained in any Desarguesian $(h-1)$-spread of $\PG(hk-1, q)$ and therefore is not equivalent to a Desarguesian pseudo-arc in $\PG(hk-1, q)$. 
Finally, we show how $\cP_{h,k, q}$ can be further extended and investigate its connection with quadrics.


Let $\Sigma \simeq \PG(hk-1,q)$ be a subgeometry of $\PG(hk-1,q^h)$ such that $\Sigma$ is the set of fixed points
$
\Sigma = \Fix(\Psi),
$
where $\Psi$ is a semilinear collineation of $\PG(hk-1,q^h)$ of order $h$. Throughout the remaining part of the paper, we assume that $q \ge hk+1$ and we set $\Sigma$ to be the canonical subgeometry $\PG(hk-1,q)$, i.e., 
\[
\Sigma = 
\Bigl\{
\langle (x_1,\ldots,x_{hk}) \rangle_{\F_{q^h}} 
: (x_1,\ldots,x_{hk}) \in \F_q^{hk},\ (x_1,\ldots,x_{hk}) \neq (0,\ldots,0)
\Bigr\}.
\]
Clearly, $\Sigma$ is the set of fixed points of the semilinear collineation 
$\Psi$ of $\PG(hk-1,q^h)$ of order $h$, defined by
\[
\Psi: 
\langle (x_1,\ldots,x_{hk}) \rangle_{\F_{q^h}} 
\longmapsto 
\langle (x_1^q,\ldots,x_{hk}^q) \rangle_{\F_{q^h}}.
\]

\subsection{Construction} \label{sec:explicitconstruction}

Consider a normal rational curve $\mathcal{N}_{hk,q^h}$ in $\PG(hk-1,q^h)$.  
For an imaginary point $P$ of $\mathcal{N}_{hk,q^h}$, the subspace $X^*(P) = \langle P, P^{\Psi}, \ldots, P^{\Psi^{h-1}} \rangle_{\F_{q^h}}$ is an $(h-1)$-dimensional projective subspace of $\PG(hk-1,q^h)$. We now consider the projective subspaces $X(P)$ obtained by intersecting $X^*(P)$ with the subgeometry $\Sigma$.  
We show that these subspaces form a pseudo-arc in $\PG(hk-1,q)$.

\begin{proposition} \label{prop:constructionofpseudo-arc}
Let $P_1, \ldots, P_{\ell}$ be imaginary points of $\mathcal{N}_{hk,q^h}$ in $\PG(hk-1,q^h)$, no two of which lie in the same orbit under the cyclic group generated by $\Psi$.  
Assume that $\Psi$ stabilizes $\mathcal{N}_{hk,q^h}$ and that $\ell \geq k$.  
Then the set
\[
\{X(P_i), \ i = 1, \ldots, \ell\}
\]
is a pseudo-arc in $\Sigma$.
\end{proposition}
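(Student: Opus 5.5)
Fix any $k$ of the subspaces, say $X(P_1),\ldots,X(P_k)$ after relabelling. We must show that they span $\Sigma\simeq\PG(hk-1,q)$. The plan is to lift everything to $\PG(hk-1,q^h)$ and reduce to the arc property of the normal rational curve $\mathcal{N}_{hk,q^h}$.

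\textbf{Step 1: lifting.} By construction, $X^*(P_i)=\langle P_i,P_i^{\Psi},\ldots,P_i^{\Psi^{h-1}}\rangle_{\F_{q^h}}$ is $\Psi$-invariant and has dimension $h-1$, because $P_i$ is imaginary. By Lemma~\ref{lm:subspacesubgeo}, $\langle X(P_i)\rangle_{\F_{q^h}}=X^*(P_i)$. Hence the $\F_{q^h}$-span of $X(P_1),\ldots,X(P_k)$ equals
\[
\langle P_i^{\Psi^j} : 1\le i\le k,\ 0\le j\le h-1\rangle_{\F_{q^h}}.
\]
A subspace of $\Sigma$ of projective dimension $d$ extends to a subspace of $\PG(hk-1,q^h)$ of the same dimension $d$ (again Lemma~\ref{lm:subspacesubgeo}, or simply that $\F_q$-linearly independent vectors of $\F_q^{hk}$ remain $\F_{q^h}$-linearly independent). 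So it suffices to show that these $hk$ points span $\PG(hk-1,q^h)$.

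\textbf{Step 2: distinctness.} Since $\Psi$ stabilizes $\mathcal{N}_{hk,q^h}$, every point $P_i^{\Psi^j}$ lies on $\mathcal{N}_{hk,q^h}$.
\begin{itemize}
\item For fixed $i$, the points $P_i^{\Psi^j}$, $j=0,\ldots,h-1$, are pairwise distinct. Indeed, if $P_i^{\Psi^a}=P_i^{\Psi^b}$ with $a\neq b$, then the $\Psi$-orbit of $P_i$ would have size smaller than $h$, and $X^*(P_i)$ would have dimension smaller than $h-1$. This contradicts the assumption that $P_i$ is imaginary.
\item For $i\neq i'$, the orbits are disjoint by hypothesis.
\end{itemize}
Thus we obtain exactly $hk$ distinct points of $\mathcal{N}_{hk,q^h}$.

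\textbf{Step 3: conclusion.} Since $q^h\ge q\ge hk+1>hk-1$, the normal rational curve $\mathcal{N}_{hk,q^h}$ is an arc of $\PG(hk-1,q^h)$ (cf.\ \cite[Theorem 6.30]{HT}). Hence any $hk$ distinct points of it span the whole space, and these $hk$ points do. Therefore $\langle X(P_1),\ldots,X(P_k)\rangle$ is all of $\Sigma$, and the set $\{X(P_i)\}$ is a pseudo-arc. The hypothesis $\ell\ge k$ only ensures that $k$-subsets exist.

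\textbf{Main obstacle.} The only delicate point is the descent in Step 1, namely that the $\F_q$-dimension of a span of subspaces of $\Sigma$ equals the $\F_{q^h}$-dimension of the span of their extensions. This follows because the extension of a subspace of $\Sigma$ is spanned by a basis of vectors of $\F_q^{hk}$, and $\F_q$-independence of such vectors is preserved over $\F_{q^h}$. Everything else is bookkeeping.
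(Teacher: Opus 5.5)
Your proof is correct and follows the same route as the paper. Both arguments lift the $k$ subspaces $X(P_i)$ to the $hk$ points $P_i^{\Psi^j}$ of $\mathcal{N}_{hk,q^h}$, using Lemma~\ref{lm:subspacesubgeo} and the $\Psi$-invariance of the curve, and then invoke the arc property of the normal rational curve; the paper phrases this as a contradiction with a hyperplane, whereas you argue directly with spans. Your version is slightly more careful: you show that the $h$ points within a single $\Psi$-orbit are pairwise distinct because $P_i$ is imaginary, while the paper attributes all the distinctness only to the orbits being different.
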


\begin{proof}
Since the points $P_1, \ldots, P_{\ell}$ are imaginary points of $\mathcal{N}_{hk,q^h}$, it follows from the definition and from Lemma \ref{lm:subspacesubgeo} that each $X(P_i)$ is an $(h-1)$-dimensional projective space of $\Sigma$. Suppose, by contradiction, that $k$ of them do not span the whole space $\Sigma$. Then, without loss of generality, we may assume that $X(P_1), \ldots, X(P_k)$ are contained in a hyperplane of $\Sigma$.  
This would imply that the points
\[
P_1, P_1^{\Psi}, \ldots, P_1^{\Psi^{h-1}}, 
P_2, P_2^{\Psi}, \ldots, P_2^{\Psi^{h-1}}, 
\ldots,
P_k, P_k^{\Psi}, \ldots, P_k^{\Psi^{h-1}}
\]
are contained in a hyperplane of $\PG(hk-1,q^h)$.  
However, since $P_1, \ldots, P_k$ belong to distinct orbits under the cyclic group generated by $\Psi$, all the above points are distinct.  
Thus we obtain $hk$ distinct points of the normal rational curve $\mathcal{N}_{hk,q^h}$, that are contained in a hyperplane of $\PG(hk-1,q^h)$, contradicting the fact that $\cN_{hk, q^h}$ is an arc of $\PG(hk-1,q^h)$.  
Therefore, every $k$ of the subspaces $X(P_i)$ span $\Sigma$, and the claim follows.
\end{proof}

From the above result, the question of determining normal rational curves $\mathcal{N}_{hk,q^h}$ stabilized by $\Psi$ naturally arises. We consider it in the following.
  
\begin{proposition}\label{prop:fixed}
Let $\cN_{hk, q^h}$ be a normal rational curve of $\PG(hk-1,q^h)$. Then $\cN_{hk, q^h} \cap \Sigma = \cN_{hk, q}$ if and only if $\cN_{hk,q^h}^{\Psi} = \cN_{hk, q^h}$.
\end{proposition}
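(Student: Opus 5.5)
The plan is to prove the two implications separately. Both rest on one classical fact: through $hk+2$ points in general position of $\PG(hk-1,q^h)$ there passes a unique normal rational curve, and this needs $q^h\ge hk+1$, which holds. Together with the standing assumption $q\ge hk+1$, this gives $|\cN_{hk,q}|=q+1\ge hk+2$. I read "$\cN_{hk,q^h}\cap\Sigma=\cN_{hk,q}$" as: the intersection is a normal rational curve of $\Sigma\simeq\PG(hk-1,q)$.

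\emph{($\Rightarrow$)} Suppose $\cN_{hk,q^h}\cap\Sigma=\cN_{hk,q}$, parametrised by polynomials $g_i\in\F_q[x,y]$ as in \eqref{eq:generalrationalcurve}. The same $g_i$, evaluated over $\F_{q^h}$, define a normal rational curve $\overline{\cN}$ of $\PG(hk-1,q^h)$ that contains the $q+1\ge hk+2$ points of $\cN_{hk,q}$.
\begin{itemize}
\item These points are in general position, because an arc stays an arc after field extension: $hk$ points of $\Sigma$ that span $\Sigma$ still span over $\F_{q^h}$.
\item Hence $\overline{\cN}$ and $\cN_{hk,q^h}$ share $hk+2$ points in general position, and uniqueness gives $\overline{\cN}=\cN_{hk,q^h}$.
\item Since the $g_i$ have coefficients in $\F_q$, we get $\Psi(\nu(u,t))=\nu(u^q,t^q)$, so $\Psi$ permutes the points of $\overline{\cN}$. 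Therefore $\cN_{hk,q^h}^{\Psi}=\cN_{hk,q^h}$.
\end{itemize}

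\emph{($\Leftarrow$)} Suppose $\cN_{hk,q^h}^{\Psi}=\cN_{hk,q^h}$. The aim is to show that $\cN_{hk,q^h}$ is projectively equivalent, by a projectivity commuting with $\Psi$ (that is, defined over $\F_q$), to the standard curve \eqref{eq:standardnormalrationalcurve}.
\begin{itemize}
\item Write $\cN_{hk,q^h}=\{\nu_{hk,q^h}(u,t)M\}$ for some $M\in\GL(hk,q^h)$. Then $\cN^{\Psi}=\{\nu_{hk,q^h}(u,t)M^{(q)}\}$, where $M^{(q)}$ raises every entry to the power $q$.
\item From $\cN^{\Psi}=\cN$ we get that $M^{(q)}M^{-1}$ stabilises the standard curve. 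For $q^h\ge hk+1$, the stabiliser of the standard normal rational curve in $\PGL(hk,q^h)$ is the image of $\PGL(2,q^h)$ acting through the Veronese map. So $M^{(q)}=\lambda\,\hat B M$ for some $B\in\GL(2,q^h)$, with $\hat B$ its induced $hk\times hk$ matrix.
\item Equivalently, $\varphi:=\nu^{-1}\circ\Psi\circ\nu$ is the semilinear map $x\mapsto x^{q}B$ of $\PG(1,q^h)$. Since $\Psi^h=\mathrm{id}$, $\varphi$ has order dividing $h$.
\item By Hilbert's Theorem 90 for $\PGL(2,q^h)$ (Lang's theorem), $\varphi$ is conjugate in $\PGL(2,q^h)$ to the plain Frobenius $x\mapsto x^q$. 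Its fixed points therefore form a subline $\PG(1,q)$.
\item Replacing $M$ by $\hat C M$ for a suitable $C\in\GL(2,q^h)$, we may assume $B$ is scalar. Then $M^{(q)}=\mu M$, and again by Hilbert 90 we may rescale $M$ so that $M\in\GL(hk,q)$.
\item Now $\cN_{hk,q^h}$ is the standard curve transformed by an $\F_q$-matrix. Its points in $\Sigma$ are exactly the images of $\PG(1,q)$, because a point $\nu(u,t)M$ with $M$ over $\F_q$ lies in $\Sigma$ iff $(u,t)$ is proportional to an $\F_q$-vector (its first coordinate and another one determine $t/u$). So $\cN_{hk,q^h}\cap\Sigma=\cN_{hk,q}$.
\end{itemize}

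\textbf{Main obstacle.} The delicate step is the descent in the ($\Leftarrow$) direction: identifying $\varphi$ as an element of $\mathrm{P\Gamma L}(2,q^h)$ with companion automorphism the $q$-Frobenius, and then trivialising the resulting cocycle. For the first part I would rely on the classical description of the collineation group of a normal rational curve, valid when $q^h\ge hk+1$. For the second I would use the vanishing of $H^1(\mathrm{Gal}(\F_{q^h}/\F_q),\GL_n)$, applied first to $\GL_2$ and then to the scalar $\mu$.
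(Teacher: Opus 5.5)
Your proposal is correct and follows essentially the same route as the paper: for ($\Rightarrow$) you use the uniqueness of the normal rational curve through $hk+2$ points in general position, and for ($\Leftarrow$) you use the description of the stabiliser of the standard curve as $\PGL(2,q^h)$ together with Hilbert~90, which makes the induced semilinear map on $\PG(1,q^h)$ a conjugate of the Frobenius, so that the $\Psi$-fixed points come from a subline. Your extra steps of normalising $M$ into $\GL(hk,q)$ and handling the scalar ambiguity at the $\PGL_2$ level are useful refinements: they make explicit the paper's rather terse final claim that the image of that subline is a normal rational curve of $\Sigma$.
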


\begin{proof}
Recall that $\Sigma=\Fix(\Psi)$, that is,
\begin{equation}\label{eq:SigmaFix}
P\in \Sigma \iff P^\Psi=P.
\end{equation}
Assume first that $\cN_{hk,q^h}\cap\Sigma=\cN_{hk,q}$. Then $\cN_{hk,q}\subseteq \cN_{hk,q^h}$ and
$\cN_{hk,q}$ is pointwise fixed by $\Psi$, by~\eqref{eq:SigmaFix}. Moreover, since
$\lvert \cN_{hk,q} \rvert = q+1 \ge hk+2$, by~\cite[Theorem~6.30]{HT}, $\cN_{hk,q^h}$ is the unique normal rational curve of
$\PG(hk-1,q^h)$ containing $\cN_{hk,q}$. Since $\Psi$ is a collineation of $\PG(hk-1,q^h)$, it follows that
$\cN_{hk,q^h}^{\Psi}$ is also a normal rational curve containing $\cN_{hk,q}$; hence
$\cN_{hk,q^h}^{\Psi}=\cN_{hk,q^h}$. 

Conversely, assume that $\cN_{hk,q^h}^{\Psi}=\cN_{hk,q^h}$.
Let $\nu_{hk,q^h}:\PG(1,q^h)\to \PG(hk-1,q^h)$ be the Veronese embedding and set
\[
\cN'_{hk,q^h}:=\{\nu_{hk,q^h}(u,t):\ \langle(u,t)\rangle_{\F_{q^h}}\in\PG(1,q^h)\}\subseteq \PG(hk-1,q^h).
\]
Every normal rational curve of $\PG(hk-1,q^h)$ is projectively equivalent to $\cN'_{hk,q^h}$. Hence there exists
$A\in \GL(hk,q^h)$ such that
\[
\cN_{hk,q^h}=\{A\, \nu_{hk,q^h}(u,t)^{\top}:\ \langle(u,t)\rangle_{\F_{q^h}}\in\PG(1,q^h)\}.
\]
Since $\cN_{hk,q^h}$ is $\Psi$--invariant, for every $\langle(u,t)\rangle\in\PG(1,q^h)$ there exists a (necessarily unique)
point $\langle(u',t')\rangle\in\PG(1,q^h)$ such that
\begin{equation}\label{eq:inv1}
A^{q}\, \nu_{hk,q^h}(u^q,t^q)^{\top}
=
(A\, \nu_{hk,q^h}(u,t)^{\top})^{\Psi}
=
A\, \nu_{hk,q^h}(u',t')^{\top}.
\end{equation}
Multiplying \eqref{eq:inv1} on the left by $A^{-1}$ yields
\[
S\, \nu_{hk,q^h}(u^q,t^q)^{\top}=\nu_{hk,q^h}(u',t')^{\top},
\qquad\text{where}\qquad
S:=A^{-1}A^{q}\in \GL(hk,q^h).
\]
Thus the projectivity induced by $S$ stabilizes the standard curve $\cN'_{hk,q^h}$. By \cite[Theorem~6.32]{HT}, there exists a matrix $B\in \GL(2,q^h)$ such that the action of $S$ on $\cN'_{hk,q^h}$ is induced
by $B$, that is, for every $\langle(u,t)\rangle_{\F_{q^h}}\in\PG(1,q^h)$,
\begin{equation}\label{eq:SB}
\nu_{hk,q^h}(B\, (u,t)^{\top})
=
S\, \nu_{hk,q^h}(u,t)^{\top}.
\end{equation}

We now determine $\cN_{hk,q^h}\cap \Sigma$. Let $P=A\, \nu_{hk,q^h}(u,t)^{\top}\in \cN_{hk,q^h}$. By~\eqref{eq:SigmaFix},
$P\in\Sigma$ if and only if $P^\Psi=P$, that is,
\begin{equation}\label{eq:fixedcondition}
S\, \nu_{hk,q^h}(u^q,t^q)^{\top}
=
\nu_{hk,q^h}(u,t)^{\top}.
\end{equation}
Note that, by definition of $S$, we have the identity
\[
S\,S^{q}\cdots S^{q^{h-1}}=I.
\]
Equivalently, by~\eqref{eq:SB}, the matrix $B\in \GL(2,q^h)$ (defined up to scalar multiplication) satisfies
\[
B\,B^{q}\cdots B^{q^{h-1}}=I.
\]
By Hilbert’s Theorem~90 (see, for example, \cite[Proposition 3, pag. 151]{serre1979local} and \cite[Proposition 1.3]{Glasby1997writing}), there exists $C\in \GL(2,q^h)$ such that
\begin{equation}\label{eq:coboundary}
B=C^{-1}C^{q}.
\end{equation}
Substituting \eqref{eq:coboundary} into \eqref{eq:SB} and using \eqref{eq:fixedcondition}, we obtain
\[
\nu_{hk,q^h}\!\left(C^{-1}C^{q}(u^q,t^q)^{\top}\right)
=
\nu_{hk,q^h}(u,t),
\]
which implies that $\langle(u,t)\rangle\in \PG(1,q)$.

Therefore, $\cN_{hk,q^h}\cap\Sigma$ is the image under $A \, \nu_{hk,q^h}$ of a $\PG(1,q)$--subline.
Consequently, $\cN_{hk,q^h}\cap\Sigma$ is a normal rational curve in $\Sigma$.
\end{proof}

At this stage, we need to compute the number of imaginary points of a normal rational curve of $\PG(hk-1,q^h)$ fixed by $\Psi$. 
Taking into account Proposition~\ref{prop:fixed},
we can represent the normal rational curve as in \eqref{eq:standardnormalrationalcurve}:
\[
\mathcal{N}_{hk,q^h}
= 
\Bigl\{
\langle (1, t, t^2, \ldots, t^{hk-1}) \rangle_{\F_{q^h}} : t \in \F_{q^h}
\Bigr\}
\cup 
\Bigl\{
\langle (0, \ldots, 0, 1) \rangle_{\F_{q^h}}
\Bigr\}.
\]
We next determine the conditions under which a point of the normal rational curve is imaginary.

\begin{lemma} \label{lem:characimaginarypoints}
The imaginary points of $\mathcal{N}_{hk,q^h}$ are precisely the points of the form 
\begin{equation} \label{eq:formimaginarypoints}
P_{\alpha} = \langle (1, \alpha, \alpha^2, \ldots, \alpha^{hk-1}) \rangle_{\F_{q^h}},
\end{equation}
where $\alpha \in \F_{q^h}$ satisfies $\F_q(\alpha) = \F_{q^h}$.
\end{lemma}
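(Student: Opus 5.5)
We compute $X^*(P)$ directly for each point of the curve. Since $\Psi$ raises coordinates to the $q$-th power, for $P_\alpha=\langle(1,\alpha,\ldots,\alpha^{hk-1})\rangle_{\F_{q^h}}$ we have $P_\alpha^{\Psi^i}=P_{\alpha^{q^i}}$. The point at infinity $\langle(0,\ldots,0,1)\rangle_{\F_{q^h}}$ is fixed by $\Psi$. Hence $X^*$ of the point at infinity is a single point. Since $h\ge 2$ is the relevant case, this point is not imaginary. (If $h=1$ the statement is trivial, because every point is imaginary and $\F_q(\alpha)=\F_q$ always.)

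For a point $P_\alpha$ with $\alpha\in\F_{q^h}$, we have
\[
X^*(P_\alpha)=\langle P_\alpha,P_{\alpha^q},\ldots,P_{\alpha^{q^{h-1}}}\rangle_{\F_{q^h}}.
\]
We distinguish two cases.

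Suppose first that $\F_q(\alpha)=\F_{q^h}$. Then the conjugates $\alpha,\alpha^q,\ldots,\alpha^{q^{h-1}}$ are pairwise distinct, since $\alpha$ has degree $h$ over $\F_q$. So we get $h$ distinct points of $\cN_{hk,q^h}$. Since $h\le hk$ and $q^h+1\ge hk$, these points are independent, because $\cN_{hk,q^h}$ is an arc. Concretely, the $h\times hk$ matrix with rows $(1,\beta,\ldots,\beta^{hk-1})$, for $\beta$ a conjugate of $\alpha$, contains the Vandermonde minor on its first $h$ columns. That minor is nonzero because the $\beta$'s are distinct. Hence $\dim X^*(P_\alpha)=h-1$, and $P_\alpha$ is imaginary.

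Suppose instead that $\F_q(\alpha)=\F_{q^d}$ with $d<h$, $d\mid h$. Then $\alpha^{q^d}=\alpha$, so the orbit $\{P_{\alpha^{q^i}}\}$ contains only $d$ distinct points. Thus $X^*(P_\alpha)$ is spanned by at most $d$ points and has dimension at most $d-1<h-1$, so $P_\alpha$ is not imaginary.

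Combining the two cases with the treatment of the point at infinity gives exactly the characterization \eqref{eq:formimaginarypoints}. The only step needing care, rather than a genuine obstacle, is the independence of the conjugate points, which the Vandermonde argument settles. The identity $P_\alpha^{\Psi}=P_{\alpha^q}$ follows immediately from the coordinate description of $\Psi$.
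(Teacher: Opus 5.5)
Your proof is correct, but you read the key matrix differently from the paper. The paper stacks $P_\alpha,P_\alpha^{\Psi},\ldots,P_\alpha^{\Psi^{h-1}}$ into the $h\times hk$ matrix $M_\alpha$ and reads it column-wise, as a rectangular $q$-Moore matrix in the elements $1,\alpha,\ldots,\alpha^{hk-1}$. The Moore-determinant criterion \cite[Lemma 3.51]{lidl1997finite} then ties its rank to the $\F_q$-linear independence of these powers, i.e.\ to $[\F_q(\alpha):\F_q]$, so both directions come out in one stroke. You instead read the same matrix row-wise, as the Veronese images of the Galois conjugates of $\alpha$, so its rank is the number of distinct conjugates. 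A Vandermonde minor gives rank $h$ when $\F_q(\alpha)=\F_{q^h}$, and counting the orbit gives rank at most $d<h$ otherwise.

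Your route is fully elementary. It also uses the same mechanism, independence of distinct points of the curve, that drives Proposition~\ref{prop:constructionofpseudo-arc} and Theorem~\ref{th:extension_pseudoarc}. The paper's route is shorter but rests on the cited Moore-determinant lemma. You also explicitly dispose of $\langle(0,\ldots,0,1)\rangle_{\F_{q^h}}$, which the paper's proof silently passes over.

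One small correction: your parenthetical about $h=1$ is wrong. For $h=1$ the point at infinity is also imaginary but is not of the form $P_\alpha$, so the statement actually fails there. The lemma tacitly requires $h\ge 2$, which is the case the paper intends.
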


\begin{proof}
For any $\alpha \in \F_{q^h}$ and $i=0,\ldots,h-1$, we have
\[
P_{\alpha}^{\Psi^i} 
= 
\langle (1, \alpha^{q^i}, \alpha^{2q^i}, \ldots, \alpha^{(hk-1)q^i}) \rangle_{\F_{q^h}}.
\]
Consider the matrix whose rows are representatives of the points
$P_{\alpha}, P_{\alpha}^{\Psi}, \ldots, P_{\alpha}^{\Psi^{h-1}}$, namely
\[
M_\alpha=
\begin{pmatrix}
1 & \alpha & \alpha^2 & \cdots & \alpha^{hk-1}\\
1 & \alpha^{q} & \alpha^{2q} & \cdots & \alpha^{(hk-1)q}\\
\vdots & \vdots & \vdots & & \vdots\\
1 & \alpha^{q^{h-1}} & \alpha^{2q^{h-1}} & \cdots & \alpha^{(hk-1)q^{h-1}}
\end{pmatrix} \in \F_{q^h}^{h\times hk}.
\]
This is a (rectangular) $q$-Moore matrix associated with the elements
$1,\alpha,\alpha^2,\ldots,\alpha^{hk-1}$. By \cite[Lemma 3.51]{lidl1997finite}, we get that the rows of $M_\alpha$ are linearly independent over $\F_{q^h}$ if and only if $\alpha$ does not belong to any proper subfield of $\F_{q^h}$,
that is, if and only if $\F_q(\alpha)=\F_{q^h}$. Consequently, the points
\[
P_{\alpha}, P_{\alpha}^{\Psi}, \ldots, P_{\alpha}^{\Psi^{h-1}}
\]
span an $(h-1)$-dimensional projective space if and only if $\alpha$ generates $\F_{q^h}$ over $\F_q$.
This proves the claim.
\end{proof}

In view of \Cref{prop:constructionofpseudo-arc}, we consider imaginary points that do not belong to the same orbit under the cyclic group generated by $\Psi$.  
It is straightforward to verify that this condition corresponds to selecting elements $\alpha \in \F_{q^h}$ lying in distinct orbits of the $q$-Frobenius automorphism. For this reason, we define $\Lambda_{h,q}$ to be a set of representatives of the orbits of size $h$ of $\F_{q^h}$ under the $q$-Frobenius automorphism.  
Note that $\alpha \in \Lambda_{h,q}$ if and only if $\F_q(\alpha) = \F_{q^h}$.  
The size of $\Lambda_{h,q}$ coincides with the number of monic irreducible polynomials over $\F_q$ of degree $h$ and is given by 
\begin{equation*} 
|\Lambda_{h,q}| 
   = \frac{1}{h}\sum_{d \mid h} \mu\!\left(\frac{h}{d}\right) q^d,
\end{equation*}
where $\mu$ denotes the M\"obius function, defined for $m \in \mathbb{N}$ by
    $$\mu(m)=\begin{cases}
        1 & \mbox{ if  } m=1, \\
        (-1)^k & \mbox{ if } m \mbox{  is the product of } k \mbox{ distinct primes, } \\
        0 & \mbox{ if } m \mbox{ is divisible by the square of a prime.}
    \end{cases}$$
See for instance \cite[Section 1.6]{hirschfeld1998projective}. In particular, when $h$ is prime, the expression for $|\Lambda_{h,q}|$ simplifies to
\[
|\Lambda_{h,q}| = \frac{q^h - q}{h}.
\]
The following result holds true.

\begin{theorem} \label{th:explicitconstructionpseudoarc}
The set
\begin{equation} \label{eq:pseudopkhq}
\mathcal{P}_{h,k,q}:=\{X(P_{\alpha}) = X^*(P_{\alpha}) \cap \Sigma, \ \alpha \in \Lambda_{h,q}\},
\end{equation}
where $P_{\alpha}$ is as in \eqref{eq:formimaginarypoints}, is a pseudo-arc 
in $\Sigma \simeq \PG(hk-1,q)$ having size $|\Lambda_{h,q}|$.
\end{theorem}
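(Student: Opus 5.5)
The plan is to apply Proposition~\ref{prop:constructionofpseudo-arc} to the curve $\mathcal{N}_{hk,q^h}$ in its standard form \eqref{eq:standardnormalrationalcurve}, with the points $P_\alpha$, $\alpha\in\Lambda_{h,q}$. This requires three checks.

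First, $\Psi$ stabilizes the curve. Since $\Psi$ raises coordinates to the $q$-th power, we have $P_t^{\Psi}=P_{t^q}$ for every $t\in\F_{q^h}$, and $\langle(0,\ldots,0,1)\rangle$ is fixed. As $t\mapsto t^q$ is a bijection of $\F_{q^h}$, we get $\mathcal{N}_{hk,q^h}^{\Psi}=\mathcal{N}_{hk,q^h}$. (Equivalently, by Proposition~\ref{prop:fixed}, $\mathcal{N}_{hk,q^h}\cap\Sigma$ is the standard $\mathcal{N}_{hk,q}$.)

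Second, each $P_\alpha$ with $\alpha\in\Lambda_{h,q}$ is imaginary. This is Lemma~\ref{lem:characimaginarypoints}, because $\F_q(\alpha)=\F_{q^h}$. In particular each $X(P_\alpha)$ is an $(h-1)$-space of $\Sigma$, by Lemma~\ref{lm:subspacesubgeo}.

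Third, no two of the $P_\alpha$ lie in the same $\langle\Psi\rangle$-orbit. The $\langle\Psi\rangle$-orbit of $P_\alpha$ is $\{P_{\alpha^{q^i}}: 0\le i\le h-1\}$. The map $t\mapsto P_t$ is injective: the second coordinate of $(1,t,\ldots)$ recovers $t$. Hence $P_\alpha$ and $P_\beta$ share an orbit if and only if $\beta=\alpha^{q^i}$ for some $i$. For distinct representatives in $\Lambda_{h,q}$ this cannot happen.

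To apply Proposition~\ref{prop:constructionofpseudo-arc} we also need $\ell=|\Lambda_{h,q}|\ge k$. Since $q\ge hk+1$, this follows from the standard bound $|\Lambda_{h,q}|\ge (q^h-\sum_{d<h}q^d)/h\ge (q^h-q^{h}/(q-1)\cdot q^{-?})/h$. More simply, $h|\Lambda_{h,q}|\ge q^h-\frac{q^{\lfloor h/2\rfloor+1}-q}{q-1}$, which is at least $q^{h-1}(q-1)$ and hence at least $hk$. In the degenerate case $h=1$ we have $\Lambda_{1,q}=\F_q$, which has size $q\ge k$. This check is routine but is the only place where the hypothesis $q\ge hk+1$ is used, so I would write it carefully.

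Finally, the size. The map $\alpha\mapsto X(P_\alpha)$ must be injective on $\Lambda_{h,q}$. Suppose $X(P_\alpha)=X(P_\beta)$ with $\alpha\ne\beta$. Then $X^*(P_\alpha)=X^*(P_\beta)$ by Lemma~\ref{lm:subspacesubgeo}. This would put the $h+1\le hk$ distinct curve points $P_\alpha,\ldots,P_{\alpha^{q^{h-1}}},P_\beta$ in an $(h-1)$-space. That contradicts the arc property of $\mathcal{N}_{hk,q^h}$ when $k\ge2$; the case $k=1$ is trivial. Hence $|\mathcal{P}_{h,k,q}|=|\Lambda_{h,q}|$.

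I expect no genuine obstacle. The only mildly delicate points are the lower bound $|\Lambda_{h,q}|\ge k$ and the injectivity needed for the exact size.
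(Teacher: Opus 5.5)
Your proposal is correct and matches the paper's argument, which the paper leaves implicit by stating the theorem directly after Proposition~\ref{prop:constructionofpseudo-arc}: the paper likewise relies on the $\Psi$-invariance of the standard curve, on Lemma~\ref{lem:characimaginarypoints}, and on the correspondence between Frobenius orbits in $\F_{q^h}$ and $\langle\Psi\rangle$-orbits of the points $P_\alpha$; your explicit checks that $|\Lambda_{h,q}|\ge k$ and that $\alpha\mapsto X(P_\alpha)$ is injective are details the paper does not write out. One small correction: the case $k=1$ is not ``trivial'' for injectivity but is exactly where it fails, since every $X(P_\alpha)$ then equals $\Sigma\simeq\PG(h-1,q)$, so the size $|\Lambda_{h,q}|$ holds there only when counted with multiplicity.
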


\begin{remark}
Note that, in order to construct a pseudo-arc as in \Cref{prop:constructionofpseudo-arc}, 
it is enough to consider an arc of $\PG(hk-1,q^h)$ left invariant by $\Psi$.
Hence, the proposition can be extended to any arc of $\PG(hk-1,q^h)$ having this property.  
However, one then needs to determine the imaginary points of such an arc that do not belong
to the same orbit under the cyclic group generated by $\Psi$.
\end{remark}


\subsection{The pseudo-arc \texorpdfstring{$\mathcal{P}_{h,k,q}$}{Phkq} is non-Desarguesian} \label{sec:nondesarguesian}

$
$
Let $\cD$ be a Desarguesian $(h-1)$-spread of $\Sigma$ with director spaces $\Theta, \Theta^{\Psi}, \dots, \Theta^{\Psi^{h-1}}$. 
A natural way to construct a pseudo-arc of $\Sigma$ is the following.  
Consider an arc \( \mathcal{A} \) in \(\Theta \simeq \PG(k - 1, q^h) \). The subset of $\cD$ given by 
\[X(\cA) = \{X(P) \; : \; P \in \cA\}\]
is a pseudo-arc of $\Sigma$.
Since all the elements of a pseudo-arc obtained from an arc $\cA$ in this way belong to the Desarguesian $(h-1)$-spread $\cD$ of $\Sigma$, we will refer to such a pseudo-arc as a \textbf{Desarguesian pseudo-arc}. 
In this section, we prove that the pseudo-arc $\mathcal{P}_{h,k,q}$ is non-Desarguesian, that is, there is no Desarguesian $(h-1)$-spread of $\Sigma$ containing all its members.

Consider the setting as in Section \ref{sec:explicitconstruction}. Let $\cN_{hk, q^h}$ denote the normal rational curve of $\PG(hk-1, q^h)$, $q \ge hk+1$, as in \eqref{eq:standardnormalrationalcurve}. Then $\cN_{hk, q^h} \cap \Sigma = \cN_{hk, q}$, where $\Sigma$ is the canonical subgeometry $\PG(hk-1, q)$. Let $H$ be the subgroup of $\PGL(hk, q)$ of order $q^3-q$ isomorphic to $\PGL(2, q)$ acting $3$-transitively on $\cN_{hk, q} \cap \Sigma$, see \cite[Theorem 6.30]{HT}. 

\begin{lemma}
The group $H$ fixes the pseudo-arc 
\begin{align*}
    \cP_{h,k,q} = \left\{ X(P_{\alpha}) = X^*(P_{\alpha}) \cap \Sigma \;:\; \alpha \in \Lambda_{h,q} \right\}.
\end{align*}
\end{lemma}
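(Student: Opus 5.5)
The plan is to realize $H$ concretely as matrices and track how they move the points $P_\alpha$. Every element of $\PGL(2,q)$, given by a matrix $B=\begin{pmatrix} a & b\\ c & d\end{pmatrix}\in\GL(2,q)$, induces via the Veronese embedding $\nu_{hk,q}$ a matrix $\hat B\in\GL(hk,q)$. Its entries are polynomials over $\F_q$ in $a,b,c,d$, determined by the identity
\[
\bigl((au+bt)^{hk-1-i}(cu+dt)^{i}\bigr)_{i=0,\dots,hk-1}=\hat B\,\bigl(u^{hk-1-i}t^i\bigr)_{i=0,\dots,hk-1}^{\top}.
\]
This identity is polynomial in $u,t$, so it holds for all $u,t$ in every extension field, in particular in $\F_{q^h}$. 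The group $H$ consists of these projectivities. Since $\hat B$ has entries in $\F_q$, the same matrix also defines a projectivity of $\PG(hk-1,q^h)$ that commutes with $\Psi$, preserves $\Sigma$, and stabilizes $\cN_{hk,q^h}$. On this curve it acts as the Möbius map $t\mapsto (c+dt)/(a+bt)$, with $\infty$ treated in the usual way.

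Fix $\alpha\in\Lambda_{h,q}$. Because $\alpha\notin\F_q\cup\{\infty\}$ and $B$ is defined over $\F_q$, we get $a+b\alpha\neq0$. So $\hat B$ maps $P_\alpha$ to $P_\beta$ with $\beta=(c+d\alpha)/(a+b\alpha)$. Moreover $\F_q(\beta)=\F_q(\alpha)=\F_{q^h}$, since the Möbius map has coefficients in $\F_q$ and is invertible. By Lemma~\ref{lem:characimaginarypoints}, $P_\beta$ is again an imaginary point of the curve.

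Since $\hat B$ commutes with $\Psi$, we have $\hat B(P_\alpha^{\Psi^i})=P_\beta^{\Psi^i}$ for every $i$. Hence $\hat B(X^*(P_\alpha))=X^*(P_\beta)$, and because $\hat B$ preserves $\Sigma$, also $\hat B(X(P_\alpha))=X(P_\beta)$.

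It remains to see that $X(P_\beta)\in\cP_{h,k,q}$. Let $\gamma\in\Lambda_{h,q}$ be the representative of the Frobenius orbit of $\beta$, so $\beta=\gamma^{q^j}$ for some $j$. Then $P_\beta=P_\gamma^{\Psi^j}$, which gives $X^*(P_\beta)=X^*(P_\gamma)$ and therefore $X(P_\beta)=X(P_\gamma)\in\cP_{h,k,q}$. So $H$ maps $\cP_{h,k,q}$ into itself, and since $H$ acts bijectively this is equality.

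The step I expect to need the most care is the identification of $H$ with the matrices $\hat B$. The abstract subgroup $H\cong\PGL(2,q)$ acting $3$-transitively on $\cN_{hk,q}$ should be exactly the group induced by Veronese-lifting $\PGL(2,q)$. This follows from \cite[Theorem~6.32]{HT} together with $q\ge hk+1$: a projectivity fixing $hk+2$ points of the curve is the identity, so the induced action on $\cN_{hk,q}$ determines the projectivity. The remaining checks, namely $\F_q$-rationality of the entries, the commutation $\hat B\Psi=\Psi\hat B$, and the polynomial identity holding over $\F_{q^h}$, are routine.
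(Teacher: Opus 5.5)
Your proof is correct and follows the same route as the paper. The paper argues that $H$ preserves $\Sigma$ and $\cN_{hk,q^h}$ and sends each $P_\alpha$ with $\F_q(\alpha)=\F_{q^h}$ to some $P_\beta$ with $\F_q(\beta)=\F_{q^h}$; you make explicit the steps it leaves implicit, namely the M\"obius formula for $\beta$, the commutation $\hat B\Psi=\Psi\hat B$ giving $X^*(P_\alpha)\mapsto X^*(P_\beta)$, and the passage from $\beta$ to its Frobenius-orbit representative in $\Lambda_{h,q}$.
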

\begin{proof}
The group $H$ stabilizes both $\Sigma$ and $\cN_{hk, q^h}$. In particular, if $\xi \in H$, and $P_\alpha$ is a point of $\cN_{hk, q^h}$, where $\alpha$ does not lie in any proper subfield of $\F_{q^h}$, then $P_\alpha^\xi = P_\beta$, with $\beta$ not contained in any proper subfield of $\F_{q^h}$. The assertion follows.
\end{proof}

\begin{lemma}
    There is no Desarguesian $(h-1)$-spread of $\Sigma$ left invariant by $H$.
\end{lemma}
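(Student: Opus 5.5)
The plan is to argue by contradiction. Suppose $\cD$ is a Desarguesian $(h-1)$-spread of $\Sigma$ with $\cD^{\xi}=\cD$ for every $\xi\in H$, and let $\Theta,\Theta^{\Psi},\dots,\Theta^{\Psi^{h-1}}$ be its director spaces. By Section~\ref{sec:autdesarg}, and after conjugating by a projectivity of $\Sigma$ mapping $\cD$ to $\cD(\Theta)$, $H$ embeds in the full stabilizer $\cG\cong \frac{\GL(k,q^h)}{Z(\GL(k,q))}\rtimes C_h$. The normal subgroup $H'$ of $H$ fixing each director space is the kernel of the induced homomorphism $H\to C_h$. So $H/H'$ is cyclic of order dividing $h$.

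Since $H\cong\PGL(2,q)$ and $q\ge hk+1\ge 5$, every normal subgroup of $H$ with cyclic quotient contains $\mathrm{PSL}(2,q)$. Hence $H'\supseteq \mathrm{PSL}(2,q)$, and $H'$ fixes the $(k-1)$-dimensional subspace $\Theta$ of $\PG(hk-1,q^h)$. The same holds for each $\Theta^{\Psi^i}$. The extension of $H'$ to $\PG(hk-1,q^h)$ stabilizes $\cN_{hk,q^h}$ and acts on it as the corresponding subgroup of $\PGL(2,q)\le\PGL(2,q^h)$. In particular $H'$ is $2$-transitive on $\cN_{hk,q}$, because $\mathrm{PSL}(2,q)$ is $2$-transitive on $\PG(1,q)$.

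It therefore suffices to show that $\PG(hk-1,q^h)$ has no $(k-1)$-dimensional subspace $\Theta$ invariant under $\mathrm{PSL}(2,q)$ that also satisfies $\langle\Theta,\dots,\Theta^{\Psi^{h-1}}\rangle=\PG(hk-1,q^h)$. I would proceed in three steps.

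\begin{enumerate}
\item \emph{Reduction to coordinate subspaces.} Take the stabilizer of the two points $\nu(1,0)$ and $\nu(0,1)$, which in $\mathrm{PSL}(2,q)$ is the torus $t\mapsto \lambda^2 t$. It acts diagonally, multiplying the $i$-th coordinate by $\lambda^{2i}$. An invariant subspace is then a sum of its intersections with the eigenspaces of these characters.
\item \emph{Reduction to osculating spaces.} The unipotent subgroup $t\mapsto t+c$, $c\in\F_q$, fixes $\nu(0,1)$. Its action, together with the torus, should force an invariant subspace of projective dimension $k-1$ to be the osculating space $\Osc_{k-1}(\nu(0,1))$, which is spanned by the last $k$ coordinate vectors, as in \eqref{eq:generatoroscatinfty}.
\item \emph{Contradiction by transitivity.} By the transitivity of $H'$ on $\cN_{hk,q}$, the same argument at any other point $P$ gives $\Theta=\Osc_{k-1}(P)$. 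But $\Osc_{k-1}(\nu(0,1))\ne\Osc_{k-1}(\nu(1,0))$, since the latter is spanned by the first $k$ coordinates. This is a contradiction.
\end{enumerate}

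Alternatively, the last step can be obtained from the spanning condition: every $\Theta^{\Psi^i}$ would equal the same $\Psi$-fixed osculating space, so their span is only $(k-1)$-dimensional. That contradicts $\langle\Theta,\dots,\Theta^{\Psi^{h-1}}\rangle=\PG(hk-1,q^h)$ as soon as $h\ge2$.

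The main obstacle is the middle step. The characters $\lambda^{2i}$, $0\le i\le hk-1$, need not be pairwise distinct on the squares of $\F_q^*$ when $(q-1)/2<hk$. Also, when $p\le hk-1$ the unipotent group may have many invariant subspaces, since $\mathrm{Sym}^{hk-1}$ is not irreducible in small characteristic. My first attempt would use the whole torus where possible, that is, work with $H$ itself when the image of $H$ in $C_h$ is trivial. Otherwise I would use the $2$-transitivity of $H'$ to intersect the coordinate decompositions with respect to several pairs of points of $\cN_{hk,q}$. If that fails, I would invoke the results of \cite{Sheekeyetal} relating Desarguesian spreads to Segre varieties. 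These should identify an $H'$-invariant Desarguesian spread with an $H'$-invariant Segre variety $\mathcal{S}_{k-1,h-1}$ whose $k$-dimensional factor carries a faithful $\mathrm{PSL}(2,q)$-action over $\F_{q^h}$. That would contradict the irreducibility and dimension constraints of the modules appearing in $\mathrm{Sym}^{hk-1}$ for $q\ge hk+1$.
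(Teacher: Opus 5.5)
Your reduction is correct, and more careful than the paper's. $H$ permutes the director spaces through a cyclic group, so the stabilizer $H'$ of $\Theta$ is normal with cyclic quotient and therefore contains $\mathrm{PSL}(2,q)$. The paper instead concludes that all of $H$ fixes $\Theta$ because ``the largest subgroup of $H$ has order $q(q-1)$''. For $q$ odd this ignores the index-$2$ subgroup $\mathrm{PSL}(2,q)$, so when $q$ is odd and $h$ is even only your weaker conclusion is justified.

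The genuine gap is your Step~2, on which Step~3 and your alternative ending both depend. You claim that a $k$-dimensional subspace invariant under the torus and the translations $t\mapsto t+c$ must be spanned by the last $k$ coordinate vectors. This needs every binomial $\binom{j}{i}$, $0\le i\le j\le hk-1$, to be nonzero mod $p$, and it fails otherwise. Take $q=9$, $h=2$, $k=4$, with coordinates indexed $0,\dots,7$. The translation $t\mapsto t+c$ acts by $e_2\mapsto e_2+c^3e_5$, $e_5\mapsto e_5$, $e_6\mapsto e_6+ce_7$, $e_7\mapsto e_7$. Hence $\langle e_2,e_5,e_6,e_7\rangle$ is invariant under the whole Borel subgroup of $\PGL(2,9)$ fixing $\nu(0,1)$, yet it is not the span of the last four coordinates. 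Your fallbacks (intersecting coordinate decompositions, Segre varieties) are not carried out, so as it stands the proposal does not prove the lemma.

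The missing idea, which is the entire content of the paper's proof, is that unipotent elements and osculating spaces are unnecessary. The torus eigenspaces are spanned by standard basis vectors, so they are $\Psi$-stable.

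\emph{Case $H'=H$.} Use the full torus $\{\mathrm{diag}(1,a,\dots,a^{hk-1}):a\in\F_q^*\}$, of order $q-1\ge hk$, whose characters are distinct. An invariant $(k-1)$-space is then spanned by $k$ standard basis vectors. It therefore meets $\Sigma$ and satisfies $\Theta^\Psi=\Theta$, a contradiction. This is the paper's argument.

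\emph{General case.} Your own reduction can be completed the same way.
\begin{enumerate}
\item For the torus of $\mathrm{PSL}(2,q)$, each eigenspace $E_c$ is spanned by at most two standard basis vectors, since $hk-1\le q-2$.
\item Each $\Theta^{\Psi^j}$ is invariant and $\F_{q^h}^{hk}=\bigoplus_j\Theta^{\Psi^j}$, so $E_c=\bigoplus_j(\Theta^{\Psi^j}\cap E_c)$. The summands have equal dimension because $\Psi$ preserves $E_c$. Hence $h\mid\dim E_c$.
\item This forces $h=2$ and every $E_c$ to be $2$-dimensional. That means $q=2k+1$ and $E_c=\langle e_c,e_{c+k}\rangle$.
\item Then $\Theta\cap E_0=\langle e_0+\mu_0e_k\rangle$; it cannot be the $\Psi$-stable point $\langle e_k\rangle$.
\item The translation $t\mapsto t+1$ sends $e_0+\mu_0e_k$ to a vector whose $E_0$-component is $e_0+(1+\mu_0)e_k$. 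This component must lie in $\langle e_0+\mu_0e_k\rangle$, which is impossible.
\end{enumerate}
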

\begin{proof}
Assume, by contradiction, that $\cD(\Theta)$ is a Desarguesian $(h-1)$-spread fixed by $H$. Then $H$ is a subgroup of the group of projectivities $\cG$ of $\Sigma$ leaving invariant $\cD(\Theta)$. See Section \ref{sec:autdesarg}, for a description of $\cG$. Since the orbit of a director space of $\cD(\Theta)$ under $\cG$ has size $h$, it follows that $\Theta^H$, the orbit of $\Theta$ under the action of $H$, has size at most $h$. Hence the stabilizer in $H$ of $\Theta$ is a subgroup of $H$ having order $|H|/|\Theta^H|$, where     
\begin{align*}
    \frac{|H|}{|\Theta^H|} \ge \frac{|H|}{h} > \frac{|H|}{q+1} = q(q-1).
\end{align*}
However it is well known that the largest subgroup of $H$ has size $q(q-1)$, see for instance \cite{Dickson}. Therefore $\Theta$ is fixed by $H$.  
Next, denote by $C$ the subgroup of $H$ of order $q-1$ fixing the points $P_1 = \langle (1,0, \dots, 0) \rangle_{\F_q}$ and $P_2 = \langle (0,0, \dots, 1) \rangle_{\F_q}$ of $\cN_{hk, q}$. Then a member of $C$ is induced by the map
\begin{align*}
    \begin{pmatrix}
        1 & 0 & \dots & 0 \\
        0 & a & \dots & 0 \\
        \vdots & \vdots & \ddots & \vdots \\
        0 & 0 & \dots & a^{hk-1}
    \end{pmatrix},
\end{align*}
for some $a \in \F_q \setminus \{0\}$. Observe that if $T = \langle v \rangle_{\F_{q^h}}$ is a point of $\PG(hk-1, q^h)$, then the points in $T^C$ span a projective subspace of $\PG(hk-1, q^h)$ whose dimension equals the number of non zero entries of $v$ minus one. More precisely, a $(k-1)$-dimensional projective subspace of $\PG(hk-1, q^h)$ that is left invariant by $C$ consists of points with non-zero entries in fixed $k$ positions; hence, it is not disjoint from $\Sigma$. Therefore, $\Theta$ is not disjoint from $\Sigma$. A contradiction.  
\end{proof}

The next result has been obtained in \cite{Sheekeyetal}.

\begin{theorem}\label{thm:intersection}
Let $\cF$ be a pseudo-arc of $\PG(hk-1, q)$ of size $k+1$ and let $\cD$, $\cD'$ be distinct Desarguesian $(h-1)$-spreads of $\PG(hk-1, q)$ such that $\cF \subset \cD \cap \cD'$. Then $|\cD \cap \cD'| = \frac{q^{kr}-1}{q^r-1}$ for some proper divisor $r$ of $h$. Moreover, if $\cD = \cD(\Theta)$, then $\cD \cap \cD'$ corresponds to a subgeometry isomorphic to $\PG(k-1, q^r)$ of $\Theta \simeq \PG(k-1, q^h)$. 
\end{theorem}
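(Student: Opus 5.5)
We would prove \Cref{thm:intersection} algebraically, by describing each Desarguesian spread through a field of linear maps. Write $\Sigma=\PG(V,\F_q)$ with $V=\F_q^{hk}$. The key fact we take as given (it is equivalent to the construction $\cD(\Theta)$ of Section~\ref{sec:Des}) is the following. A Desarguesian $(h-1)$-spread $\cD$ determines a subfield $K\subseteq \mathrm{End}_{\F_q}(V)$ with $K\simeq\F_{q^h}$. The elements of $\cD$ are exactly the $1$-dimensional $K$-subspaces of $V$. The director spaces correspond to the $h$ eigenspace decompositions of $V\otimes\F_{q^h}$ under $K$. Let $K'$ be the corresponding field for $\cD'$. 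An element of $\cD$ lies in $\cD'$ if and only if, as an $\F_q$-subspace, it is also $K'$-invariant. Since $\cD\neq\cD'$, we have $K\neq K'$.

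\emph{Step 1: choose a frame from the pseudo-arc.} Write $\cF=\{X_1,\dots,X_{k+1}\}$. Because any $k$ members of $\cF$ span $\Sigma$, we have $V=X_1\oplus\cdots\oplus X_k$. Moreover, $X_{k+1}$ is the graph of $\F_q$-isomorphisms $\phi_i\colon X_1\to X_i$. Using the $\phi_i$, we identify every $X_i$ with a fixed $h$-dimensional space $W$, so that $V=W^k$ and $X_{k+1}=\{(w,\dots,w):w\in W\}$. Each $X_i$ and $X_{k+1}$ is a $1$-dimensional $K$-subspace. Hence $K$ preserves every $X_i$, and its actions on the $X_i$ are compatible with the diagonal. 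Therefore $K$ acts diagonally as $\{(w_i)\mapsto(f(w_i))\}$ for $f$ in a subfield $F\subseteq\mathrm{End}_{\F_q}(W)$ with $F\simeq\F_{q^h}$. The same argument gives a subfield $F'$ for $K'$, and $F\neq F'$. Fixing $w_0\neq0$ identifies $W$ with $F$ via $f\mapsto f(w_0)$. Then the elements of $\cD$ become the sets $F\cdot(a_1,\dots,a_k)$ with $a_i\in F$ not all zero; these are the points $\langle(a_1,\dots,a_k)\rangle_{\F_{q^h}}$ of $\Theta\simeq\PG(k-1,q^h)$.

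\emph{Step 2: describe the common elements.} Take an element $\{(\lambda a_1,\dots,\lambda a_k):\lambda\in F\}$ of $\cD$ and normalise its first nonzero coordinate to $1$, say $a_j=1$. This element is $F'$-invariant if and only if, for every $\mu\in F'$ and $\lambda\in F$, there is $\lambda'\in F$ with $\mu(\lambda a_i)=\lambda' a_i$ for all $i$. Reading this at $i=j$ gives $\lambda'=\mu(\lambda)$. The condition then becomes $\mu\circ m_{a_i}=m_{a_i}\circ\mu$ on $W$ for all $i$, where $m_{a_i}$ denotes multiplication by $a_i$. So the element lies in $\cD'$ exactly when every $a_i$ lies in $R:=F\cap C(F')$, where $C(F')$ is the centralizer of $F'$ in $\mathrm{End}_{\F_q}(W)$. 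Since $F'$ is a maximal subfield acting irreducibly on $W$, we have $C(F')=F'$, so $R=F\cap F'$. This is a subfield $\F_{q^r}$ of $F$, and $r$ is a proper divisor of $h$ because $F\neq F'$. Consequently $\cD\cap\cD'$ corresponds to the points of $\Theta$ admitting coordinates in $\F_{q^r}$. These points form a subgeometry $\PG(k-1,q^r)$, which has $\frac{q^{kr}-1}{q^r-1}$ points.

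\emph{Main obstacle.} We expect the hardest part to be Step~1. One must justify rigorously that a Desarguesian spread is the same as the orbit structure of a field $K\simeq\F_{q^h}$ of endomorphisms. One must also check that the coordinatisation of $\Theta$ in Section~\ref{sec:Des} matches the $F$-coordinates above, so that the $\F_{q^r}$-points really form a subgeometry of the director space $\Theta$ as the statement claims. For this matching we would use the explicit model $\Sigma'$ of Section~\ref{sec:Des}: there $K$ acts by the diagonal matrices $\mathrm{diag}(a,a^q,\dots,a^{q^{h-1}})$, and $\Theta$ consists of the first coordinate of each block. Once this dictionary is in place, Step~2 is a short computation.
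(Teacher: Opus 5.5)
Your argument is correct. The paper does not prove this statement itself: it quotes it from \cite{Sheekeyetal}, and the introduction indicates that the cited argument goes through the link between Desarguesian spreads and Segre varieties.

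Your route is a self-contained linear-algebra proof. The frame $\cF$ forces both spread fields to act diagonally on $V=W^k$, through subfields $F,F'\simeq\F_{q^h}$ of $\mathrm{End}_{\F_q}(W)$. Since $W$ is one-dimensional over $F'$, the centralizer is $C(F')=\mathrm{End}_{F'}(W)=F'$. Hence the common elements are exactly the points of $\PG(k-1,F)$ whose normalised coordinates lie in $F\cap F'\simeq\F_{q^r}$, which gives the count and the subgeometry structure at once.

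The two viewpoints meet at $r=1$. The elements $\{(c_1w,\dots,c_kw):w\in W\}$ with $c_i\in\F_q$ do not depend on $F$: they are one system of maximal subspaces of the Segre variety $\PG(k-1,q)\times\PG(h-1,q)$ determined by $\cF$. Your centralizer computation measures exactly how much more two Desarguesian spreads through $\cF$ can share. What your approach buys is a short, elementary proof with no external input; the cited approach places the result inside the Segre-variety geometry the paper uses elsewhere.

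The obstacle you flag is lighter than you expect.
\begin{itemize}
\item You never need that $\cD$ determines $K$. Only the trivial implication $K=K'\Rightarrow\cD=\cD'$ is used to get $F\neq F'$.
\item You do need $\F_q\,\mathrm{id}\subseteq K$, so that $F\cap F'\supseteq\F_q$ and $r$ is defined. This holds in the model $\Sigma'$ and is preserved under conjugation by $\GL(hk,q)$.
\item For the ``moreover'' part, each $\phi_i$ is $K$-linear because $X_{k+1}$ is a $K$-subspace. So your $F$-coordinates are $K$-linear coordinates on $V$.
\item In the model $\Sigma'$, the bijection $X(P)\mapsto P$ from $\cD(\Theta)$ to $\Theta$ is just $\langle x\rangle_K\mapsto\langle x\rangle_{\F_{q^h}}$ in block-first coordinates. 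For another director space only a Frobenius twist appears. It is therefore a (semilinear) collineation, and it carries the $(F\cap F')$-rational points onto a subgeometry $\PG(k-1,q^r)$ of $\Theta$.
\end{itemize}
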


Combining the previous result, together with \cite[p. 66]{Hirschfeld1983}, the following holds.

\begin{corollary}\label{cor:maxsize}
Let $\cD$, $\cD'$ be distinct Desarguesian $(h-1)$-spreads of $\PG(hk-1, q)$. The size of a pseudo-arc of $\PG(hk-1, q)$ contained $\cD \cap \cD'$ cannot exceed $q^r+k$, for some proper divisor $r$ of $h$.  
\end{corollary}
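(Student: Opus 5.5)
If the pseudo-arc $\cF \subseteq \cD\cap\cD'$ has at most $k$ elements, there is nothing to prove: $k \le q^r+k$ for every $r$, for instance $r=1$. So I assume $|\cF| \ge k+1$. I fix any $k+1$ members of $\cF$; they form a pseudo-arc $\cF_0$ of size $k+1$ with $\cF_0 \subset \cD\cap\cD'$. Theorem~\ref{thm:intersection} then applies to $\cF_0$. It gives a proper divisor $r$ of $h$ such that, writing $\cD=\cD(\Theta)$, the set $\cD\cap\cD'$ equals $\{X(P) : P\in \Theta_r\}$. Here $\Theta_r\simeq\PG(k-1,q^r)$ is a subgeometry of $\Theta\simeq\PG(k-1,q^h)$.

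Next I move back to $\Theta$. Since $\cF\subseteq \cD\cap\cD'\subseteq\cD(\Theta)$, every element of $\cF$ has the form $X(P)$ for a unique $P\in\Theta$, and in fact $P\in\Theta_r$. Let $\cA=\{P\in\Theta_r : X(P)\in\cF\}$. I claim $\cA$ is an arc of $\Theta_r\simeq \PG(k-1,q^r)$. Suppose $k$ points $P_1,\dots,P_k\in\cA$ lie in a hyperplane $\Pi$ of $\Theta$, which is a $(k-2)$-dimensional subspace over $\F_{q^h}$. Then $\langle \Pi,\Pi^\Psi,\dots,\Pi^{\Psi^{h-1}}\rangle$ is $\Psi$-invariant and has dimension at most $h(k-1)-1$. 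By Lemma~\ref{lm:subspacesubgeo}, it meets $\Sigma$ in a proper subspace, and that subspace contains $X(P_1),\dots,X(P_k)$. This contradicts the fact that $\cF$ is a pseudo-arc. Every hyperplane of the subgeometry $\Theta_r$ extends to a hyperplane of $\Theta$, so $\cA$ is an arc of $\PG(k-1,q^r)$. Moreover $|\cA|=|\cF|$, because $P\mapsto X(P)$ is injective on $\Theta$.

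Finally I apply the classical bound for arcs in $\PG(k-1,q^r)$ from \cite[p.~66]{Hirschfeld1983}: an arc of $\PG(k-1,Q)$ has at most $Q+k$ points, with $Q=q^r$. Hence $|\cF|=|\cA|\le q^r+k$ for the proper divisor $r$ of $h$ produced above, as required.

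The only delicate step is the correspondence between pseudo-arcs contained in $\cD(\Theta)$ and arcs in $\Theta$. The direction used here is the converse of the construction $X(\cA)$ given in the text, and it needs the dimension count for $\langle\Pi,\dots,\Pi^{\Psi^{h-1}}\rangle$ to be stated carefully. Everything else is a direct application of Theorem~\ref{thm:intersection} and the known arc bound.
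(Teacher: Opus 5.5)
Your proof is correct and takes the same route as the paper, which just combines Theorem~\ref{thm:intersection} with the arc bound in $\PG(k-1,q^r)$ from \cite[p.~66]{Hirschfeld1983}. You also spell out the steps the paper leaves implicit: passing to $k+1$ members of $\cF$ so the theorem applies (the case $|\cF|\le k$ is trivial, and $r=1$ is a legitimate proper divisor there because distinct Desarguesian spreads force $h\ge 2$), and transferring the pseudo-arc property in $\cD(\Theta)$ to the arc property in $\Theta_r$ via the $\Psi$-invariant span of a hyperplane of $\Theta$ and Lemma~\ref{lm:subspacesubgeo}.
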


We are now ready to prove that the pseudo-arc $\mathcal{P}_{h,k,q}$ is non-Desarguesian.

\begin{theorem} \label{th:ourarcisnondesarguesian}
    The pseudo-arc 
    \begin{align*}
    \cP_{h,k,q} = \left\{ X(P_{\alpha}) = X^*(P_{\alpha}) \cap \Sigma \;:\; \alpha \in \Lambda_{h,q} \right\}
\end{align*}
is non-Desarguesian.
\end{theorem}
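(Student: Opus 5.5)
Suppose, for a contradiction, that some Desarguesian $(h-1)$-spread $\cD$ of $\Sigma$ contains every member of $\cP_{h,k,q}$. The plan is to use the group $H \simeq \PGL(2,q)$ to produce a second Desarguesian spread containing the pseudo-arc, and then to contradict Corollary~\ref{cor:maxsize} by a size count.

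Take any $\xi \in H$. By the lemma stating that $H$ fixes $\cP_{h,k,q}$, we have $\cP_{h,k,q}^{\xi} = \cP_{h,k,q}$. Since $\cD^{\xi}$ is the image of a Desarguesian spread under a projectivity, it is again Desarguesian, and it contains $\cP_{h,k,q}^{\xi} = \cP_{h,k,q}$. By the preceding lemma, no Desarguesian $(h-1)$-spread of $\Sigma$ is invariant under $H$. Hence there is some $\xi \in H$ with $\cD' := \cD^{\xi} \neq \cD$. We therefore have two distinct Desarguesian spreads with $\cP_{h,k,q} \subseteq \cD \cap \cD'$.

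By Corollary~\ref{cor:maxsize}, this forces $|\cP_{h,k,q}| \le q^r + k$ for some proper divisor $r$ of $h$, so $r \le h/2$. Theorem~\ref{thm:intersection} requires a sub-pseudo-arc of size $k+1$ inside $\cD\cap\cD'$; this is available as soon as $|\Lambda_{h,q}| \ge k+1$, which holds because $q \ge hk+1$. On the other side, Theorem~\ref{th:explicitconstructionpseudoarc} gives $|\cP_{h,k,q}| = |\Lambda_{h,q}|$, and the Möbius formula yields
\[
|\Lambda_{h,q}| \ \ge\ \frac{1}{h}\Bigl(q^h - \sum_{d\mid h,\, d<h} q^d\Bigr) \ \ge\ \frac{1}{h}\bigl(q^h - 2q^{h/2}\bigr).
\]
It remains to check that this lower bound exceeds $q^{h/2} + k$. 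This follows from $q \ge hk+1$: in that range $q^h/h$ dominates $q^{h/2}$ by a factor of roughly $q^{h/2}/h$, which is large.

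The main obstacle is this final inequality for small $h$. For $h=2$ the only proper divisor is $r=1$, and $|\Lambda_{2,q}| = (q^2-q)/2$ must beat $q+k$; this holds since $q \ge 2k+1 \ge 5$. The case $h=1$ is vacuous, since a pseudo-arc of points always lies in the trivial spread. For general $h$ I would bound the sum over proper divisors crudely by $2q^{h/2}$, which is valid because $q \ge 3$, and then absorb $k \le (q-1)/h$.
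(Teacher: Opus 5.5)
Your proposal is correct and follows the paper's proof. You assume $\cP_{h,k,q}\subseteq\cD$, use that $H$ fixes $\cP_{h,k,q}$ but fixes no Desarguesian spread to obtain a second Desarguesian spread $\cD'=\cD^{\xi}\neq\cD$ containing $\cP_{h,k,q}$, and then contradict Corollary~\ref{cor:maxsize} by comparing $|\Lambda_{h,q}|$ with $q^r+k$. The only difference is the final estimate: the paper goes through the chain $q^r+k<q^{h-1}<|\Lambda_{h,q}|$, whose first inequality fails for $h=2$ (where $r=1$). Your direct comparison using $r\le h/2$, with $h=2$ checked separately via $(q^2-q)/2>q+k$, is the more careful version of the same count.
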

\begin{proof}
Assume, by contradiction, that $\cD$ is a Desarguesian $(h-1)$-spread of $\Sigma$ such that $\cP_{h,k,q} \subset \cD$. Since $H$ fixes $\cP_{h,k,q}$, but there is no Desarguesian $(h-1)$-spread left invariant by $H$, it follows that there is a Desarguesian $(h-1)$-spread of $\Sigma$, say $\cD'$, contained in $\cD^H$, with $\cD \ne \cD'$, and such that $\cP_{h,k,q} \subset \cD \cap \cD'$. Therefore, 
\begin{align*}
    |\cP_{h,k,q}| \le q^r + k < q^{h-1},
\end{align*}
by Corollary \ref{cor:maxsize}. On the other hand, 
\begin{align*}
|\cP_{h,k,q}| = |\Lambda_{h, q}| \ge \frac{q^h - q^{h-1}}{h} > (k-1)q^{h-1} \ge q^{h-1}, \end{align*}
a contradiction.
\end{proof}

\subsection{Extending the pseudo-arc \texorpdfstring{$\mathcal{P}_{h,k,q}$}{Phkq}} \label{sec:extending}

A pseudo-arc is said to be {\em complete} if it is not contained in a larger pseudo-arc.  
The study of the completeness of the normal rational curve has been addressed in a series of papers \cite{seroussi2003mds,storme1991generalized,storme1992completeness}.  
In this section, we show that it is possible to enlarge the pseudo-arc $\cP_{h,k,q}$ in $\Sigma$, which  arises from the normal rational curve $\cN_{hk, q^h}$ of $\PG(hk-1,q^h)$, by adding the osculating spaces of the normal rational curve $\cN_{hk, q}$ of $\Sigma$. 
 Throughout, we keep the notation of
\Cref{sec:explicitconstruction} and assume that $\mathrm{char}(\F_q)=p\geq h$. Again, let $\cN_{hk, q^h}$ denote the normal rational curve of $\PG(hk-1, q^h)$, $q \ge hk-1$, as in \eqref{eq:standardnormalrationalcurve}. Then $\cN_{hk, q^h} \cap \Sigma = \cN_{hk, q}$, where $\Sigma$ is the canonical subgeometry $\PG(hk-1, q)$.
For each point $Q\in\mathcal{N}_{hk,q}$, let $\Osc_{h-1}(Q)$ be the osculating
space of order $h-1$ at $Q$, which is an $(h-1)$-dimensional projective space of $\Sigma$ since $p\geq h$.
Consider the family
\[
\mathcal{O}_{h-1}
\;:=\;
\Bigl\{\,
\Osc_{h-1}\bigl(\nu_{hk,q}(1,t)\bigr)\ :\ t\in\F_q
\,\Bigr\}
\ \cup\
\Bigl\{\,\Osc_{h-1}\bigl(\nu_{hk,q}(0,1)\bigr)\,\Bigr\},
\]
which consists of $q+1$ projective subspaces of dimension $h-1$ in $\Sigma$.

\begin{theorem}\label{th:extension_pseudoarc}
Assume that $\mathrm{char}(\F_q)=p\geq h$. Then \[\cP_{h,k,q} \cup\ \mathcal{O}_{h-1}
\]
is a pseudo-arc 
in
$\Sigma \simeq \PG(hk-1,q)$ of size $|\Lambda_{h,q}|+q+1$.
\end{theorem}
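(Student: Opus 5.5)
The idea is to work inside $\PG(hk-1,q^h)$ and reduce everything to the statement that a nonzero polynomial of degree at most $hk-1$ has at most $hk-1$ roots counted with multiplicity.

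\textbf{Step 1: translate spanning into polynomials.} Identify a hyperplane of $\PG(hk-1,q^h)$ with a nonzero vector $(c_0,\ldots,c_{hk-1})$. Attach to it the polynomial $f(x)=\sum_i c_i x^i$ of degree at most $hk-1$; a zero "at $\infty$" corresponds to a drop in degree. Then:
\begin{itemize}
\item the hyperplane contains $P_\alpha$ if and only if $f(\alpha)=0$;
\item it contains $\nu_{hk,q^h}(0,1)$ if and only if $\deg f<hk-1$.
\end{itemize}
By \eqref{eq:generatoroscatt}, for $t\in\F_q$ the hyperplane contains $\Osc_{h-1}(\nu_{hk,q}(1,t))$ if and only if $f(t)=f'(t)=\cdots=f^{(h-1)}(t)=0$. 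Since $p\ge h$, the factorials $j!$ with $j\le h-1$ are nonzero in $\F_q$, so these are the Hasse derivatives up to a nonzero scalar. Hence the condition is that $t$ is a root of $f$ of multiplicity at least $h$. By \eqref{eq:generatoroscatinfty}, the condition at $\nu_{hk,q}(0,1)$ is that $c_{hk-1}=\cdots=c_{hk-h}=0$, i.e.\ $\deg f\le hk-1-h$, which is "multiplicity $\ge h$ at $\infty$".

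\textbf{Step 2: pass to the subgeometry.} Suppose $k$ members of $\cP_{h,k,q}\cup\mathcal O_{h-1}$ lie in a hyperplane of $\Sigma$. Extend it over $\F_{q^h}$; it is then fixed by $\Psi$, so we may take the coefficients $c_i\in\F_q$.
\begin{itemize}
\item If $X(P_\alpha)$ lies in the hyperplane, then so does $X^*(P_\alpha)$, by Lemma~\ref{lm:subspacesubgeo}. Hence $f(\alpha^{q^i})=0$ for $i=0,\ldots,h-1$. These are $h$ distinct roots, since $\F_q(\alpha)=\F_{q^h}$, so $f$ is divisible by the minimal polynomial of $\alpha$, of degree $h$.
\item Distinct $\alpha\in\Lambda_{h,q}$ give distinct, hence coprime, irreducible factors.
\item Each osculating space in the hyperplane contributes a factor $(x-t)^h$ with $t\in\F_q$. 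These are coprime to the irreducible factors of degree $h\ge2$. (The case $h=1$ is the plain normal rational curve and is trivial.)
\end{itemize}

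\textbf{Step 3: count.} Suppose $a$ of the $k$ members are at finite points and $b$ are at $\infty$, with $b\le1$ and $a+b=k$. Then $f$ is divisible by a product of degree $ha$ and has degree at most $hk-1-hb$. Since $ha+hb=hk>hk-1$, this forces $f=0$, a contradiction. So every $k$ members span $\Sigma$.

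\textbf{Step 4: the size.} Members of $\mathcal O_{h-1}$ are pairwise distinct and distinct from those of $\cP_{h,k,q}$. This follows from the same counting argument with two members, provided $k\ge2$ so that $2h\le hk$. Alternatively, $X(P_\alpha)$ contains no point of $\cN_{hk,q}$, whereas every osculating space does. Therefore the size is $|\Lambda_{h,q}|+q+1$.

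\textbf{Main obstacle.} The delicate step is Step 1's identification of the osculating conditions with root multiplicity, especially at $\infty$. It relies on $p\ge h$, which ensures that the ordinary derivatives up to order $h-1$ detect multiplicity $h$. Here I will invoke \eqref{eq:generatoroscatt} and \eqref{eq:generatoroscatinfty} directly.
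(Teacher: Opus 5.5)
Your proposal is correct and follows essentially the same route as the paper: you encode a hyperplane by the polynomial $f(x)=\sum_i c_i x^i$ of degree at most $hk-1$. You then read containment of $X(P_\alpha)$, of $\Osc_{h-1}$ at a finite parameter $t$, and of $\Osc_{h-1}$ at $\nu_{hk,q}(0,1)$ respectively as vanishing at the $h$ conjugates of $\alpha$, a root of multiplicity at least $h$ at $t$, and a degree drop by $h$, and conclude that $hk$ zeros counted with multiplicity are too many. The differences are cosmetic: you take $f\in\F_q[x]$ and argue with coprime factors (minimal polynomials and $(x-t)^h$) rather than counting roots in $\F_{q^h}$ of a kernel vector of the block matrix $N$, and you spell out the Hasse-derivative justification for $p\ge h$ and the distinctness of members behind the size count (valid for $h,k\ge 2$), both of which the paper leaves implicit.
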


\begin{proof} 
By \Cref{th:explicitconstructionpseudoarc}, we know that $\cP_{h,k,q}$ is a pseudo-arc.
We need to show that adding the osculating spaces in $\mathcal{O}_{h-1}$ preserves
the pseudo-arc property. For each $t\in\F_q$, let $A_t$ be the matrix whose rows form a basis for the osculating space of order $h-1$ of $\mathcal{N}_{hk,q}$ at the point
$\nu_{hk,q}(1,t)$.  
According to \eqref{eq:generatoroscatt}, we can write
\begin{equation} \label{eq:basisosculating}
A_t :=
\begin{pmatrix}
1 & t & t^{2}  & \cdots & t^{hk-1} \\[4pt]
0 & 1 & 2t & \cdots & (hk-1)t^{hk-2} \\[4pt]
0 & 0 & 2\!\cdot\!1 & \cdots & (hk-1)(hk-2)t^{hk-3} \\[4pt]
\vdots &  \vdots & \vdots & \ddots & \vdots \\[4pt]
0 & \cdots & 0 &
\cdots &
\prod\limits_{i=1}^{h-1}(hk-i)\,t^{hk-h}
\end{pmatrix}
\in \F_q^{h\times hk}.
\end{equation}
According to \eqref{eq:generatoroscatinfty}, the osculating space of order $h-1$ at
$\nu_{hk,q}(0,1)$ is spanned by the rows of the matrix
\begin{equation}  \label{eq:basisosculatinginfty}
A_{\infty}=
\begin{pmatrix}
0 & \cdots & 0 & 0 & \cdots & 0 & 1 \\[4pt]
0 & \cdots & 0 & 0 & \cdots & 1 & 0 \\[4pt]
0 & \cdots & 0 & 0 & 1 & 0 & 0 \\[2pt]
\vdots &  & \vdots & \reflectbox{$\ddots$} & \vdots & \vdots & \vdots \\[2pt]
0 & \cdots & 1 & 0 & 0 & 0 & 0
\end{pmatrix} \in\F_q^{\,h\times hk}.
\end{equation}

\noindent Finally, for each $\alpha\in\Lambda_{h,q}$, define $B_{\alpha}$ as the matrix whose
rows are the vectors representing the points
$P_{\alpha},P_{\alpha}^{\Psi},\ldots,P_{\alpha}^{\Psi^{h-1}}$, namely
\[
B_{\alpha} =
\begin{pmatrix}
1 & \alpha & \alpha^{2} & \cdots & \alpha^{hk-1} \\[4pt]
1 & \alpha^{q} & \alpha^{2q}  & \cdots & \alpha^{(hk-1)q} \\[4pt]
1 & \alpha^{q^{2}} & \alpha^{2q^{2}}  & \cdots & \alpha^{(hk-1)q^{2}} \\[4pt]
\vdots & \vdots & \vdots & \ddots & \vdots \\[4pt]
1 & \alpha^{q^{h-1}} & \alpha^{2q^{h-1}}  & \cdots & \alpha^{(hk-1)q^{\,h-1}}
\end{pmatrix}
\in \F_{q^h}^{h\times hk}.
\]
The row space of $B_{\alpha}$ spans the subspace $X^*(P_{\alpha})$ associated with
the orbit of $P_{\alpha}$ under $\Psi$. To prove the assertion, it is enough to show that for any
$j\in\{1,\ldots,k\}$, if we take $j$ pairwise distinct elements
$t_1,\ldots,t_{j}\in\F_q \cup \{\infty\}$ and
$k-j$ pairwise distinct elements
$\alpha_1,\ldots,\alpha_{k-j}\in \Lambda_{h,q}$,
then the block matrix
\[
N :=
\begin{pmatrix}
A_{t_1}\\[2pt]
\vdots\\[2pt]
A_{t_{j}}\\[2pt]
B_{\alpha_1}\\[2pt]
\vdots\\[2pt]
B_{\alpha_{k-j}}
\end{pmatrix}
\in \F_{q^h}^{hk\times hk}
\]
is nonsingular. When $j=k$, only the matrices $A_{t_i}$ appear in $N$.

\noindent Assume, for contradiction, that $N$ is singular.  
Then there exists a nonzero vector
\[
\mathbf{c}=(c_0,\ldots,c_{hk-1})\in\F_{q^h}^{hk}
\]
such that $N\mathbf{c}^{\top}=\mathbf{0}$.
This means that
\[
A_{t_i}\mathbf{c}^{\top}=\mathbf{0}\quad
\text{for all }i\in\{1,\ldots,j\},
\qquad
B_{\alpha_r}\mathbf{c}^{\top}=\mathbf{0}\quad
\text{for all }r\in\{1,\ldots,k-j\}.
\]

\smallskip
\noindent
Define the polynomial
\[
f(Z)=\sum_{i=0}^{hk-1}c_i Z^i\in\F_{q^h}[Z].
\]
From $B_{\alpha_r}\mathbf{c}^{\top}=\mathbf{0}$ we deduce that
\[
f(\alpha_r^{q^s})=0,
\qquad
r\in\{1,\dots,k-j\},\ s\in\{0,1,\dots,h-1\},
\]
since the $s$-th row of $B_{\alpha_r}$ corresponds to the evaluation of
$f$ at $\alpha_r^{q^s}$.
Because $\F_q(\alpha_r)=\F_{q^h}$, the conjugates
$\alpha_r,\alpha_r^{q},\ldots,\alpha_r^{q^{h-1}}$
are $h$ distinct elements of $\F_{q^h}$.
Hence each $\alpha_r$ contributes $h$ distinct simple zeros of $f$.

\smallskip
\noindent
From $A_{t_i}\mathbf{c}^{\top}=\mathbf{0}$, if $t_i\neq\infty$ we obtain
\[
f^{(\ell)}(t_i)=0,
\qquad
\text{for all }\ell\in\{0,1,\dots,h-1\},
\]
where $f^{(\ell)}$ denotes the $\ell$-th derivative of $f$.
Thus each $t_i$ is a root of multiplicity at least $h$. In total, $f$ has at least $j\cdot h$ zeros (counted with multiplicity)
coming from the $t_i$, and $(k-j)\cdot h$ distinct simple zeros
coming from the $\alpha_r$.  
Hence $f$ has at least $hk$ zeros in $\F_{q^h}$ counting multiplicities.
Since $\deg f\le hk-1$, this is impossible for a nonzero polynomial.  
Therefore $f$ must be the zero polynomial, implying $\mathbf{c}=\mathbf{0}$.
Hence $N$ is nonsingular. Finally, if one of the parameters $t_i=\infty$, then
the last $h$ coordinates of $\mathbf{c}$ vanish,
that is, $c_{hk-h}=c_{hk-h+1}=\cdots=c_{hk-1}=0$.
In this case $f$ has degree at most $hk-h-1$, and
the same argument as above applies verbatim,
yielding the desired contradiction.
\end{proof}


\begin{remark}
The question whether the pseudo-arc $\cP_{h,k,q} \cup \mathcal{O}_{h-1}$ is complete naturally arises. We remark that, in the case $(h,k)=(2,2)$, the pseudo-arc
$\mathcal{P}_{2,2,q} \cup \mathcal{O}_1$ in $\PG(3, q)$ 
coincides with the one presented
in \cite[Section 21.4]{hirschfeld1985finite}. In this case the pseudo-arc is complete if $q \equiv 1 \pmod{3}$  \cite[Theorem 21.4.1]{hirschfeld1985finite}; on the other hand, if $q \equiv -1 \pmod{3}$, $q > 2$, by adding the imaginary axis, a non-Desarguesian spread of lines of $\PG(3, q)$ is obtained, see \cite[Theorem 21.4.2]{hirschfeld1985finite}.
\end{remark}

\subsection{Pseudo-arcs and quadrics} \label{sec:quadrics}

Quadrics are intimately connected with arcs. For instance, in \cite{glynn1994construction} it is proved that every arc in $\PG(k-1,q)$ having size $2k-1$ is contained in the intersection of a unique
projective space $S$ of quadrics of dimension $\binom{k-1}2-1$ that
does not contain a quadric that is the union of two hyperplanes. Moreover, a normal rational curve in
$\PG(k-1,q)$ is the complete intersection of such a space $S$ of quadrics. We refer to \cite{glynn1994construction} for further results
of this type. In the following, we investigate this relationship in the context of
pseudo-arcs. A pseudo-arc is said to be an \emph{intersection of quadrics} if there exists a family of quadrics whose intersection contains each of its subspaces.  
It is called a \emph{complete intersection of quadrics} if that intersection contains no further points besides those lying in the union of its subspaces. By Proposition~\ref{prop:pseudoarcquadric}, we see that if an arc $\cA$ in $\Theta \simeq \PG(k-1, q^h)$ is intersection of a family of
quadrics, then the members of the associated pseudo-arc
\[X(\cA) = \{X(P) \;:\; P \in \cA\}\]
are still contained in the intersection of a suitable
family of quadrics. Moreover, $X(\cA)$ is a complete intersection of quadrics if $\cA$ does.

\begin{proposition} \label{prop:pseudoarcquadric1}
Let $\mathcal{A}\subseteq \Theta \simeq \PG(k-1,q^h)$ be an arc, and assume that it is an intersection of quadrics in $\PG(k-1,q^h)$.  
Then the associated Desarguesian pseudo-arc $X(\mathcal{A})$ of $\PG(hk-1,q)$ is an intersection of quadrics in $\PG(hk-1,q)$.  
Moreover, if $\mathcal{A}$ is a complete intersection of quadrics, then the associated pseudo-arc $X(\mathcal{A})$ is a complete intersection of quadrics as well.
\end{proposition}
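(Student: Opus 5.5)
This statement is essentially Proposition~\ref{prop:pseudoarcquadric} specialised to an arc, so the plan is to reduce to it directly. Take $\cZ=\cA$. By hypothesis there are quadrics $\mathcal{Q}_1,\ldots,\mathcal{Q}_\ell$ of $\Theta\simeq\PG(k-1,q^h)$ with $\cA\subseteq\bigcap_i\mathcal{Q}_i$. Proposition~\ref{prop:pseudoarcquadric} then gives that every member $X(P)$, $P\in\cA$, is contained in the common intersection of the $\F_q$-quadrics $L_\alpha\circ\mathcal{Q}_i$ of $\Sigma'\simeq\PG(hk-1,q)$, where $\alpha\in\F_{q^h}^*$. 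This is exactly the definition of $X(\cA)$ being an intersection of quadrics. In this setting we identify $\Sigma'$ with the $\F_q$-vector space $\F_{q^h}^k$ via $(x_1,\ldots,x_k)\mapsto(x_1,x_1^q,\ldots,x_k^{q^{h-1}})$, and we identify $X(P)$ with the $\F_q$-points $\langle w\rangle_{\F_q}$, where $w$ ranges over the nonzero vectors of the $\F_{q^h}$-span of a representative of $P$.

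The one point needing care is that $L_\alpha\circ\mathcal{Q}_i$ really vanishes on all of $X(P)$, and not only on a chosen representative. If $P=\langle v\rangle_{\F_{q^h}}$, every point of $X(P)$ is $\langle \lambda v\rangle_{\F_q}$ for some $\lambda\in\F_{q^h}^*$. We then have $\mathcal{Q}_i(\lambda v)=\lambda^2\mathcal{Q}_i(v)=0$, and applying $\mathrm{Tr}_{q^h/q}(\alpha\,\cdot)$ gives zero. We also check that $L_\alpha\circ\mathcal{Q}_i$ is a genuine $\F_q$-quadratic form: it is $\F_q$-homogeneous of degree two, and its polar form is $\mathrm{Tr}(\alpha B_i)$, which is $\F_q$-bilinear. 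This was already noted before Proposition~\ref{prop:pseudoarcquadric}.

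For completeness, assume $\cA=\bigcap_i\mathcal{Q}_i$. Let $R=\langle w\rangle_{\F_q}$ lie on every $L_\alpha\circ\mathcal{Q}_i$. Then $\mathrm{Tr}(\alpha\mathcal{Q}_i(w))=0$ for all $\alpha$. Non-degeneracy of the trace form gives $\mathcal{Q}_i(w)=0$ for all $i$, so $R^*=\langle w\rangle_{\F_{q^h}}\in\cA$. Hence $R\in X(R^*)\in X(\cA)$, which shows that the intersection contains no points outside the union of the members of $X(\cA)$.

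The only real obstacle is bookkeeping. We must make sure the identification of $\Theta$ with $\F_{q^h}^k$, and of $\Sigma'$ with its $\F_q$-structure, is the one under which $X(P)$ consists exactly of the $\F_{q^h}$-multiples of $v$ read over $\F_q$. Once this is fixed, as in the Desarguesian spread construction of Section~\ref{sec:Des}, both parts follow verbatim from Proposition~\ref{prop:pseudoarcquadric}. The arc hypothesis plays no role beyond guaranteeing that $X(\cA)$ is a (Desarguesian) pseudo-arc.
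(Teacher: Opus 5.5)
Your proposal is correct and matches the paper: there the statement is obtained directly from Proposition~\ref{prop:pseudoarcquadric} with $\cZ=\cA$. That proposition uses the trace-composed quadrics $L_\alpha\circ\mathcal{Q}_i$, and non-degeneracy of the trace form for the completeness part. Your explicit check that $\mathcal{Q}_i(\lambda v)=\lambda^2\mathcal{Q}_i(v)=0$ for all $\lambda\in\F_{q^h}^*$, so that these forms vanish on all of $X(P)$ and not only on one representative, makes precise a step the paper leaves implicit.
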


By \cite[Theorem 3.3]{glynn1994construction}, a normal rational curve of $\PG(k-1,q)$ is the complete intersection of a space of quadrics not containing any hyperplane-pair. More precisely, consider the normal rational curve 
\[
\mathcal{N}_{k,q}=\bigl\{\,\langle (1,t,t^2,\ldots,t^{k-1})\rangle_{\fq} : t\in\F_q\,\bigr\}\cup\{\langle(0,\ldots,0,1)\rangle_{\fq}\}.
\]
The space of quadrics whose complete intersection is $\mathcal{N}_{k,q}$ is generated by the quadrics
\begin{equation} \label{eq:quadricsrationalnormalcurve}
\cQ_{i,j}\colon\; x_i x_j-x_{i+1}x_{j-1}=0,
\qquad 1\le i \leq j-2, \ 2 \leq j\le k,
\end{equation}
of $\PG(k-1,q)$ which span a vector space of dimension $\binom{k-1}{2}$.  
None of these quadrics is a union of two hyperplanes, and their intersection coincides exactly with the normal rational curve. Clearly, we can apply this description to a normal rational curve $\mathcal{N}_{k,q^h}$ of $\Theta \simeq \PG(k-1,q^h)$. By Proposition \ref{prop:pseudoarcquadric1}, the Desarguesian pseudo-arc $X(\mathcal{N}_{k,q^h})$ of $\PG(hk-1,q)$, associated with $\mathcal{N}_{k,q^h}$, is a complete intersection of quadrics as well.
 
Now, we investigate the situation for the pseudo-arc $\cP_{h,k,q}$ constructed from the imaginary points of a normal rational curve as in \Cref{th:explicitconstructionpseudoarc}. We prove that, despite the fact that it also originates from points of a normal rational curve, its geometric behaviour is fundamentally different: no quadric of the ambient projective space contains all of its elements. Consequently, this pseudo-arc is not an intersection of quadrics.

We consider the same setting as in Section \ref{sec:explicitconstruction}, with $\Sigma\cong\PG(hk-1,q)$, $\Psi$ and the normal rational curve
$
\mathcal{N}_{hk,q^h}$ as defined therein.

\begin{theorem}
There is no quadric of $\PG(hk-1,q)$ containing all the
$(h-1)$-dimensional projective spaces of the pseudo-arc $\cP_{h,k,q}$. In particular, $\cP_{h,k,q}$ is not an
intersection of quadrics.
\end{theorem}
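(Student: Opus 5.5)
We argue by linear algebra. Let $\cQ$ be a quadratic form on $\F_q^{hk}$, written as $\cQ(x)=\sum_{0\le a\le b\le hk-1} q_{ab}x_ax_b$ with $q_{ab}\in\F_q$, and suppose every $X(P_\alpha)$, $\alpha\in\Lambda_{h,q}$, lies on $\cQ$. The aim is to show that all $q_{ab}$ vanish. Extend $\cQ$ to $\F_{q^h}^{hk}$ using the same coefficients. Because $X^*(P_\alpha)=\langle X(P_\alpha)\rangle_{\F_{q^h}}$ by Lemma~\ref{lm:subspacesubgeo}, and a quadratic form vanishing on an $\F_q$-subspace also vanishes on its $\F_{q^h}$-span (its polar form vanishes on an $\F_q$-basis, hence on the span), $\cQ$ vanishes identically on $X^*(P_\alpha)$. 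Write $v_\beta=(1,\beta,\dots,\beta^{hk-1})$ and let $B$ be the polar form of $\cQ$. Then $\cQ(v_{\alpha^{q^s}})=0$ and $B(v_{\alpha^{q^s}},v_{\alpha^{q^r}})=0$ for all $s,r\in\{0,\dots,h-1\}$.

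Next we translate this into polynomial identities. Put $F(X,Y)=\sum_{a\le b}q_{ab}X^aY^b\in\F_q[X,Y]$ and $G(X,Y)=F(X,Y)+F(Y,X)$, which gives $B(v_\beta,v_\gamma)=G(\beta,\gamma)$. Note that $G(\beta,\beta)=2\cQ(v_\beta)$, so in even characteristic we use $\cQ(v_\beta)=F(\beta,\beta)$ directly. The conditions become $g(\alpha)=0$ for $g(Z)=F(Z,Z)$, and $G(\alpha,\alpha^{q^r})=0$ for $r=1,\dots,h-1$, for every $\alpha$ with $\F_q(\alpha)=\F_{q^h}$. The univariate polynomial $g$ has degree at most $2hk-2$ and vanishes on all $h|\Lambda_{h,q}|$ generators of $\F_{q^h}$. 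Similarly, $G_r(Z)=G(Z,Z^{q^r})$ vanishes on all generators.

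The reduction step is where we use the sizes of the fields. For $g$, the number of generators is roughly $q^h-O(q^{h/2})$, which exceeds $2hk-2$ under the standing assumption $q\ge hk+1$ (and $h\ge2$). So $g\equiv0$, meaning $\sum_{a+b=m}q_{ab}=0$ for every $m$. For $G_r$, we first reduce modulo $Z^{q^h}-Z$. The exponents $a+bq^r$ and $b+aq^r$, with $a,b\le hk-1<q$, are base-$q$ expansions with digit $a$ in position $0$ and digit $b$ in position $r$ (or the reverse). They are therefore distinct and already reduced, since each is less than $q^h$, except that when $r=h-r$ the pairs $(a,b)$ and $(b,a)$ can collide; this needs a separate check. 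A reduced polynomial of degree less than $q^h$ that vanishes on about $q^h-O(q^{h/2})$ points need not be zero, so counting roots of $G_r$ is not enough.

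This is the main obstacle. To get around it, we instead vanish on all of $\F_{q^h}$ by a different route. The elements of proper subfields number at most $\sum_{d\mid h,\,d<h}q^d$. Consider $G_r(Z)\cdot\prod_{d}(Z^{q^d}-Z)$ over the maximal proper divisors $d$ of $h$. This product vanishes on all of $\F_{q^h}$, and its degree is about $(hk)(q^r+1)+\sum q^d$. When this is less than $q^h$ the product must be zero, forcing $G_r\equiv0$; this works at least for $r=1$, and $r=1$ is what we need. From $G_1\equiv0$ the distinct monomials $Z^{a+bq}$ give $q_{ab}+q_{ba}=0$ in the $G$-coefficients, so $q_{ab}=0$ for $a\neq b$; in odd characteristic this applies directly. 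In characteristic $2$ the polar form only detects the off-diagonal part, but it gives the same conclusion. The remaining diagonal terms are $q_{aa}X_a^2$. Combined with $g\equiv0$, the coefficient of $Z^{2a}$ forces $q_{aa}=0$, because the off-diagonal contributions are already zero.

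Hence $\cQ=0$, and no quadric contains all members of $\cP_{h,k,q}$. The part requiring the most care is the degree comparison for $G_1$ against $q^h$, since it uses $h\ge2$ and $q\ge hk+1$. When $h=2$, the quantity $(hk)(q+1)+q$ is not below $q^2$ unless $q$ is large relative to $k$. If that comparison fails, we would fall back to counting roots of the reduced $G_1$ directly. Its reduced degree is at most $(hk-1)(q+1)$, and it vanishes on $h|\Lambda_{h,q}|$ points. Its total count of roots in $\F_{q^h}$ is at most its degree, and $(hk-1)(q+1)<q^h-q^{h-1}$ holds for $h\ge3$ with $q\ge hk+1$. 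The case $h=2$ needs the extra observation that vanishing on $\F_{q^2}\setminus\F_q$ means $G_1$ is divisible by $(Z^{q^2}-Z)/(Z^q-Z)$, which has degree $q^2-q$, greater than $(2k-1)(q+1)$ when $q\ge 2k+1$.
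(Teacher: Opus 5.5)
Your argument is correct, and its decisive step is the same as the paper's. The polynomial $G_1(Z)=B(\mathbf{Z},\mathbf{Z}^q)$ has degree at most $(hk-1)(q+1)$. It vanishes at all $h|\Lambda_{h,q}|\ge q^h-q^{h-1}$ generators of $\F_{q^h}$, so it must be identically zero.

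Where you genuinely differ is in how this becomes a contradiction.

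\textbf{The paper's route.} It first invokes Glynn's theorem: the $\ge 2hk-1$ points of $\cN_{hk,q^h}$ on $\cQ^*$ force $\cQ^*$ into the span of the quadrics $x_ix_j-x_{i+1}x_{j-1}$. It then writes out the polar form of such a combination coefficient by coefficient, asserts that this form is nonzero, and concludes that $B^*(\mathbf{Z},\mathbf{Z}^q)$ is a nonzero polynomial with too many roots.

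\textbf{Your route.} You argue contrapositively. Since the exponents $a+bq$ with $0\le a,b\le hk-1<q$ are pairwise distinct, $G_1\equiv 0$ kills every off-diagonal coefficient. The elementary observation that $F(Z,Z)$, of degree $\le 2hk-2$, vanishes on more than $2hk-2$ points then kills the diagonal ones. This argument works in every characteristic, so your odd/even case split is not needed.

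\textbf{What your route buys.} The proof is self-contained, with no appeal to Glynn's description of the quadrics through a normal rational curve and no $\gamma_{i,j}$ bookkeeping. It also makes explicit the characteristic-$2$ point hidden behind the paper's ``$B^*$ is not identically zero'': no nonzero diagonal form $\sum c_a x_a^2$ vanishes on the curve.

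\textbf{Unnecessary detour.} Your ``main obstacle'' does not arise for $r=1$. The bound $(hk-1)(q+1)$ is far below the number of roots, and the product $G_r\prod_d (Z^{q^d}-Z)$ is just root counting in disguise. Your worry for $h=2$ came from using $hk$ instead of $hk-1$ in the degree: in fact $(2k-1)(q+1)+q\le q^2-2$ when $q\ge 2k+1$. The feared collision between $a+bq^r$ and $b+aq^r$ never occurs for $a\neq b$, since the base-$q$ digits sit in different positions. You could drop that whole passage and go straight to your final root count.
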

\begin{proof}
Assume, by contradiction, that there exists a quadric $\mathcal{Q}$ of $\PG(hk-1,q)$
containing all the $(h-1)$-dimensional projective spaces
\[
X(P_{\alpha}) = X^*(P_{\alpha})\cap \Sigma,\qquad \alpha\in\Lambda_{h,q},
\]
where $P_{\alpha}$ is as in \eqref{eq:formimaginarypoints}.
Let $\mathcal{Q}^*$ denote the extension of $\mathcal{Q}$ to
$\PG(hk-1,q^h)$, obtained by viewing the defining homogeneous quadratic equation
of $\mathcal{Q}$ over $\F_{q^h}$.
Since $\mathcal{Q}$ contains $X(P_{\alpha})$, it follows that $\mathcal{Q}^*$ contains
$X^*(P_{\alpha})$.
In particular, $\mathcal{Q}^*$ contains the points
$P_{\alpha},P_{\alpha}^{\Psi},\ldots,P_{\alpha}^{\Psi^{h-1}}$ for every
$\alpha\in\Lambda_{h,q}$.
These are pairwise distinct points of the normal rational curve
$\mathcal{N}_{hk,q^h}$.
Therefore, $\mathcal{Q}^*$ contains the sub-arc
\[
\mathcal{A}_0:=\{\,P_{\alpha}^{\Psi^i} : \alpha\in\Lambda_{h,q},\ i=0,\ldots,h-1\,\}
\subseteq \mathcal{N}_{hk,q^h},
\]
whose size is
\[
|\mathcal{A}_0|
= h|\Lambda_{h,q}|
\ge q^h-\sum_{\substack{d\mid h\\ d<h}} q^d
\ge q^h-q^{h-1}.
\]
In particular, since $q\ge hk$, then $|\mathcal{A}_0|\ge 2hk-1$.
Hence, by \cite[Theorem~3.3]{glynn1994construction}, the quadric $\mathcal{Q}^*$ vanishes
on the whole normal rational curve $\mathcal{N}_{hk,q^h}$ and must belong to the space of
quadrics defining $\mathcal{N}_{hk,q^h}$, namely the space generated by the quadrics of
$\PG(hk-1,q^h)$
\[
\mathcal{Q}_{i,j}^*:\ x_i x_j-x_{i+1}x_{j-1}=0,
\qquad 1 \le i \le j-2,\ 3\le j\le hk,
\]
cf.~\eqref{eq:quadricsrationalnormalcurve}.
In particular, we may write
\[
\mathcal{Q}^*:
\sum_{\substack{1 \le i \le j-2\\ 3 \le j \le hk}}
\lambda_{i,j}\,(x_i x_j-x_{i+1}x_{j-1})=0,
\]
for some coefficients $\lambda_{i,j}\in\F_{q^h}$, not all zero. It can then be checked by direct computation that the bilinear form
\[
B^*(\mathbf{x},\mathbf{y})=\sum_{i,j}\gamma_{i,j}x_i y_j,
\]
with $\mathbf{x}=(x_1,\ldots,x_{hk})$ and $\mathbf{y}=(y_1,\ldots,y_{hk})$, is the bilinear
form associated with $\mathcal{Q}^*$, where the coefficients $\gamma_{i,j}$ are given by
\[
\begin{cases}
\gamma_{1,1}=\gamma_{1,2}=\gamma_{hk-1,hk}=\gamma_{hk,hk}=0,\\
\gamma_{1,j}=\lambda_{1,j}, & \text{for all } j\ge 3,\\
\gamma_{i,hk}=\lambda_{i,hk}, & \text{for all } i\le hk-2,\\
\gamma_{i,i}=-2\lambda_{i-1,i+1}, & \text{for all } 2\le i\le hk-1,\\
\gamma_{i,i+1}=-\lambda_{i-1,i+2}, & \text{for all } 2\le i\le hk-2,\\
\gamma_{j,i}=\gamma_{i,j}=\lambda_{i,j}-\lambda_{i-1,j+1},
& \text{for all } 2\le i\le j-2,\ 4\le j\le hk-1.
\end{cases}
\]

Since the coefficients $\lambda_{i,j}$ are not all zero, it follows from the above
expressions that the bilinear form $B^*$ is not identically zero.
Moreover, since $X^*(P_\alpha)\subseteq \mathcal{Q}^*$ for every $\alpha\in\Lambda_{h,q}$,
and since $P_\alpha,P_\alpha^{\Psi},\ldots,P_\alpha^{\Psi^{h-1}}$ belong to
$X^*(P_\alpha)$, it follows from \cite[Lemma~1.5]{HT} that
\begin{equation} \label{eq:condbilinear}
B^*(P_\alpha^{\Psi^i},P_\alpha^{\Psi^{i+1}})=0
\end{equation}
for every $\alpha\in\Lambda_{h,q}$ and $i=0,\ldots,h-1$.

Now, let $\mathbf{Z}=(1,Z,\ldots,Z^{hk-1})$, and consider the polynomial
\[
f(Z):=B^*(\mathbf{Z},\mathbf{Z}^q)
=\sum_{i,j}\gamma_{i,j}Z^{i-1}Z^{q(j-1)}.
\]
Since $q\ge hk$ and $B^*$ is non-zero, $f(Z)$ is a non-zero polynomial over $\F_{q^h}$ of
degree at most $(hk-1)(q+1)$.
By~\eqref{eq:condbilinear}, for every $\alpha\in\Lambda_{h,q}$ the elements
$\alpha,\alpha^q,\ldots,\alpha^{q^{h-1}}$ are zeros of $f(Z)$.
Hence, $f(Z)$ has at least
\[
h|\Lambda_{h,q}|
\ge q^h-q^{h-1}
\]
distinct zeros.
In particular, for $q\ge hk+1$ this number is strictly larger than $\deg(f)\le (hk-1)(q+1)$,
which contradicts the degree bound.
Therefore, no such quadric $\mathcal{Q}$ exists.
\end{proof}

\section{Applications to additive MDS codes} \label{sec:coding}

Pseudo-arcs have recently regained considerable attention, especially because of their connection with additive MDS codes.  
Indeed, in \cite{ball2023additive} it was shown that they represent the geometric counterpart of additive MDS codes.  
In this section, we first recall the necessary background on additive codes and their geometric interpretation.  
We then prove that the codes associated with the pseudo-arc $\mathcal{P}_{h,k,q}$ constructed in Theorem \ref{th:explicitconstructionpseudoarc} correspond precisely to the additive MDS codes arising from a polynomial construction recently introduced in \cite{neri2025skew}.  
As a consequence, we show that these codes are not equivalent to any linear MDS code.  
Finally, by considering the extension of the pseudo-arc $\mathcal{P}_{h,k,q}$ via osculating spaces, we can also extend the codes constructed in \cite{neri2025skew}.

\subsection{Basics on additive Hamming metric codes} We begin by recalling some basic notions of Hamming-metric codes that will be needed in the following. For further background, we refer the reader to \cite{ball2020course}.

Let $n$ be a positive integer and let $\F$ denote a finite field.  
The \emph{Hamming distance} on $\F^n$ is defined by
\[
\dH(\mathbf{x},\mathbf{y}) := \lvert \{\, i : x_i \neq y_i \,\} \rvert,
\]
for all vectors $\xx=(x_1,\ldots,x_n)$ and $\yy=(y_1,\ldots,y_n)$ in $\F^n$.  
A \emph{Hamming-metric code} is simply a subset of the metric space $(\F^n,\dH)$. A code $\C$ is called \emph{additive} if it is closed under vector addition, i.e., if $\xx,\yy \in \C$ implies $\xx+\yy \in \C$.  
Suppose now that the alphabet is $\F = \F_{q^h}$, the finite field with $q^h$ elements.  
Viewing $\F_{q^h}$ as a vector space over its subfield $\F_q$, we say that a code $\C \subseteq \F_{q^h}^n$ is \emph{$\F_q$-linear} if it is an $\F_q$-subspace of $(\F_{q^h}^n, \dH)$.  

We write that $\C$ is an \emph{$\F_q$-linear $(n,q^k,d)_{q^h}$ code} if $\C$ is an $\F_q$-linear Hamming-metric code in $\F_{q^h}^n$, $\dim_{\F_q}(\C) = k$, and $d$ is the \emph{minimum distance}:
    \[
    d = \dH(\C) := \min\{ \dH(\mathbf{c}_1, \mathbf{c}_2) : \mathbf{c}_1, \mathbf{c}_2 \in \C,\ \mathbf{c}_1 \ne \mathbf{c}_2 \}.
    \]
If the minimum distance is not known or not relevant, we simply denote $\C$ as an $\F_q$-linear $(n,q^k)_{q^h}$ code.  
Note that every additive code is $\F_p$-linear, and every $\F_q$-linear code is additive. When $\C$ is $\F_{q^h}$-linear, we refer to it simply as a \emph{linear} code. The \emph{Hamming weight} of a vector $\xx = (x_1,\ldots,x_n) \in \F_{q^h}^n$ is defined as
\[
\wH(\xx) := \dH(\xx, \mathbf{0}) = \lvert \{\, i : x_i \neq 0 \,\} \rvert.
\] 
Since $\dH(\xx,\yy) = \wH(\xx - \yy)$ for all $\xx, \yy \in \F_{q^h}^n$, the minimum distance of an additive code $\C$ can be equivalently expressed as
$\dH(\C) = \min\{\, \wH(\cc) : \cc \in \C \setminus \{\mathbf{0}\} \,\},$
see e.g. \cite[Lemma 3]{ball2023additive}. The fundamental trade-off between the parameters of a Hamming-metric code is expressed by the classical Singleton bound: if $\C$ is an $\F_q$-linear $(n,q^k,d)_{q^h}$ code, then
    \[
        q^k \;\leq\; q^{h(n-d+1)}.
    \]
\noindent Codes that attain equality in the Singleton bound are called \emph{maximum distance separable} codes, or simply \emph{MDS codes}.

\begin{definition} \label{def:additiveequivalence}
   Let $\C$ and $\C'$ be $\F_q$-linear Hamming-metric codes in $\F_{q^h}^n$.  
   We say that $\C$ and $\C'$ are \textbf{$\F_q$-linearly equivalent} if there exist $\F_q$-linear automorphism $\phi_1,\ldots,\phi_n$ of $\F_{q^h}$ and a permutation $\theta$ of $\{1,\ldots,n\}$ such that
   \[
   \C' = \left\{ \big(\phi_1(x_{\theta(1)}),\ldots,\phi_n(x_{\theta(n)})\big) : (x_1,\ldots,x_n) \in \C \right\}.
   \]
\end{definition}

Codes that are $\F_q$-linear equivalent clearly have the same size and the same minimum distance.  
Note that this notion of additive equivalence includes the \emph{monomial equivalence}, where each $\phi_i$ is of the form 
    $
    y \mapsto \alpha_i y, $ with $\alpha_i \in \F_{q^h}^*,
    $
    in which case the codes are also called \emph{monomial (or linear) equivalent}, see e.g. \cite{huffman2010fundamentals}.

\begin{remark}
When $q=p$ is prime, the notion of $\F_p$-linear equivalence coincides with the general notion of equivalence for Hamming-metric codes.  
Indeed, in full generality two Hamming-metric codes $\C$ and $\C'$ are said to be \textbf{equivalent} if the maps $\phi_i$ in \Cref{def:additiveequivalence} are arbitrary permutations of $\F_{p^h}$.  
However, as shown in \cite{ball2022equivalence}, for additive MDS codes the appropriate notion of equivalence is precisely additive equivalence, i.e., $\F_p$-linear equivalence.  Indeed, \cite[Theorem 4]{ball2022equivalence} shows that if $\C$ and $\C'$ are additive MDS codes in $\F_{p^h}^n$, then $\C$ and $\C'$ are equivalent if and only if they are $\F_p$-linearly equivalent. It should also be noted that, as observed in \cite[Section 4]{ball2022equivalence}, it is not known whether equivalence can in general be characterized in terms of additive equivalence for all additive codes. Our study, however, is restricted to additive MDS codes, for which, in the case where $q=p$, this characterization applies. For further details on equivalence in Hamming-metric spaces, we refer the reader to \cite{ball2022equivalence,betten2006error} and the references therein.
\end{remark}

\medskip
Finally, we recall that in the sequel we only consider Hamming-metric codes that are \textbf{nondegenerate}, meaning that the code cannot be isometrically embedded into a smaller ambient space.  

\subsection{The geometry of additive Hamming metric codes} 

There is a well-known geometric interpretation of codes endowed with the Hamming metric.  
A linear $(n,q^k,d)_q$ code $\C$ corresponds to a multiset of points in $\PG(k-1,q)$, and conversely, see e.g. \cite{MR1186841}.  
In \cite{ball2023additive}, this correspondence was extended to additive codes over finite fields, see also \cite{adriaensen2023additive}.  
Below we briefly recall this viewpoint.







A \textbf{generator matrix} of a $(n, q^{hk}, d)_{q^h}$ code $\cC$, which is linear over $\F_q$, is an $hk \times n$ matrix
with entries from $\F_{qh}$, whose row space over $\F_q$ is $\cC$. Fix an $\F_q$-basis $\{\omega_1,\ldots,\omega_h\}$ of $\F_{q^h}$ and let $\boldsymbol{\omega}= (\omega_1,\ldots,\omega_h)^{\top}$. Then the $j$-th column of $G$ is of the form $G_j \boldsymbol{\omega}$, for some $G_j \in \F_q^{hk \times h}$.
Define $\pi_j$ as the subspace of $\PG(hk-1,q)$ corresponding to the column space of $G_j$, for each $j \in \{1,\ldots, n\}$.  
Each $\pi_j$ has projective dimension at most $h-1$. Also the subspace $\pi_j$ is independent of the chosen $\F_q$-basis of $\F_{q^h}$, see \cite[Remark 3.1]{adriaensen2023additive}. Therefore, we can consider the multiset $\mathcal{U}=\{\pi_1,\ldots,\pi_n\}$.
Conversely, given a multi-set of subspaces of $\PG(hk-1, q)$ of projective dimension at most $h-1$,
once that an $\F_q$-basis $\{\omega_1,\ldots,\omega_h\}$ of $\F_{q^h}$ is fixed, an additive code $\cC$ can be constructed by reversing the
above process.

\begin{theorem}
$\cC$ is an additive $(n, q^{kh}, d)_{q^h}$ code which is linear over $\F_q$ if and only if the multi-set $\cU$ of subspaces obtained from a generator matrix $G$ of $\cC$ is such that every hyperplane of $\PG(hk-1, q)$ contains at most $n-d$ elements of $\cU$
and some hyperplane contains exactly $n-d$ elements of $\cU$. 
\end{theorem}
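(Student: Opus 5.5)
The plan is to reduce both conditions to a single weight computation: for each nonzero codeword, count its zero coordinates and read that count off as the number of elements of $\cU$ lying in one hyperplane.

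First, fix the dictionary between codewords and hyperplanes. Write $G$ with columns $G_j\boldsymbol{\omega}$, where $G_j\in\F_q^{hk\times h}$. A codeword of $\cC$ is $\mathbf{u}G$ for some $\mathbf{u}\in\F_q^{hk}$, and its $j$-th coordinate is
\[
\mathbf{u}G_j\boldsymbol{\omega}=\sum_{s=1}^{h}(\mathbf{u}G_j)_s\,\omega_s .
\]
Since $\omega_1,\ldots,\omega_h$ is an $\F_q$-basis of $\F_{q^h}$ and $\mathbf{u}G_j\in\F_q^h$, this coordinate vanishes exactly when $\mathbf{u}G_j=\mathbf{0}$. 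That in turn holds exactly when every column of $G_j$ lies in $\mathbf{u}^{\perp}$, i.e. when $\pi_j$ is contained in the hyperplane $H_{\mathbf{u}}:=\mathbf{u}^\perp$ of $\PG(hk-1,q)$. Consequently
\[
\wH(\mathbf{u}G)=n-\bigl|\{j:\pi_j\subseteq H_{\mathbf{u}}\}\bigr| ,
\]
counted as a multiset. Every hyperplane arises as $H_{\mathbf{u}}$ for some nonzero $\mathbf{u}$, unique up to a nonzero $\F_q$-scalar.

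Next, handle the dimension. $\cC$ has size $q^{hk}$ exactly when the map $\mathbf{u}\mapsto\mathbf{u}G$ is injective. By the weight formula, $\mathbf{u}G=\mathbf{0}$ with $\mathbf{u}\neq\mathbf{0}$ would mean all $\pi_j$ lie in $H_{\mathbf{u}}$. Under the hypothesis in either direction this is excluded. On the geometric side, $d\ge 1$ forces some $\pi_j\not\subseteq H$ for every hyperplane $H$. On the code side, the dimension is $hk$, so $\mathbf{u}\mapsto\mathbf{u}G$ is injective.

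Then, conclude. Using the standard fact recalled above that $\dH(\cC)$ is the minimum weight of a nonzero codeword, the weight formula gives
\[
d=\min_{\mathbf{u}\neq\mathbf{0}}\wH(\mathbf{u}G)=n-\max_{H}\bigl|\{j:\pi_j\subseteq H\}\bigr| .
\]
This says precisely that every hyperplane contains at most $n-d$ elements of $\cU$, with equality for some hyperplane. Both implications follow at once.

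The only subtle step is the one in the first paragraph: that a coordinate vanishes if and only if $\mathbf{u}G_j=\mathbf{0}$. This relies on the entries of $\mathbf{u}G_j$ lying in $\F_q$, together with the $\F_q$-independence of the $\omega_s$. Everything else is bookkeeping, including the harmless projective scaling of $\mathbf{u}$.
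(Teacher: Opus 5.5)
Your proof is correct. The paper gives no proof of this statement; it recalls the result from \cite{ball2023additive}, and your argument is the standard one behind it: the $j$-th coordinate of $\mathbf{u}G$ vanishes if and only if $\mathbf{u}G_j=\mathbf{0}$, that is, $\pi_j\subseteq\mathbf{u}^{\perp}$, so $\wH(\mathbf{u}G)$ equals $n$ minus the number of elements of $\cU$ in that hyperplane, and the tacit assumption $d\ge 1$ you use in the geometric direction correctly rules out all $\pi_j$ lying in one hyperplane, which gives $|\cC|=q^{hk}$.
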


Note that different choices of the generator matrix $G$ yield different representatives of the same multisets of subspaces. Precisely, if $G'$ is another generator matrix for $\C$, then $G' = MG$ for some $M \in \GL(hk,q)$, and the multisets of subspaces defined by $G$ and $G'$ lie in the same orbit under $\PGL(hk,q)$. The following holds. 
\begin{theorem} [see \textnormal{\cite[Result 3.3]{adriaensen2023additive}}]
    There is a one to one correspondence between equivalence classes of $\F_q$-linear $(n,q^{kh},d)_{q^h}$ codes and orbits of multisets $\cU$ formed by $n$ subspaces of $\PG(hk-1, q)$ of projective dimension at most $h-1$, such that every hyperplane contains at most $n-d$ elements of $\cU$ and some hyperplane contains exactly $n-d$ elements of $\cU$. 
\end{theorem}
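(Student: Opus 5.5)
The plan is to reduce everything to two facts. First, a change of generator matrix acts on the multiset $\cU$ by an element of $\PGL(hk,q)$. Second, each $\F_q$-linear automorphism $\phi_j$ of a coordinate acts on the corresponding subspace $\pi_j$ trivially. I would set up the maps carefully and check that they are well defined and mutually inverse on equivalence classes and orbits.

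The first step is to show that the subspace $\pi_j$ depends only on the $j$-th coordinate map $\C\to\F_{q^h}$. If $G$ has rows $g_1,\dots,g_{hk}$, the $j$-th column $G_j\boldsymbol{\omega}$ records the images of the basis codewords in the $j$-th coordinate. Identifying $\F_{q^h}\simeq\F_q^h$ via $\boldsymbol{\omega}$, the $\F_q$-linear map $\F_q^{hk}\to\F_q^h$, $x\mapsto x^{\top}G_j$, is exactly $x\mapsto(\text{$j$-th coordinate of } xG)$. So the column space of $G_j$ is determined by this map, or dually by its kernel. Replacing $G$ by $MG$ with $M\in\GL(hk,q)$ replaces each $G_j$ by $MG_j$, and hence maps every $\pi_j$ by the same projectivity. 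Applying $\phi_j$ to coordinate $j$ replaces $G_j\boldsymbol{\omega}$ by $G_j\Phi\boldsymbol{\omega}$, where $\Phi\in\GL(h,q)$ represents $\phi_j$ in the basis $\boldsymbol{\omega}$. Then $G_j\Phi$ has the same column space, so $\pi_j$ is unchanged. A permutation $\theta$ of coordinates just permutes the multiset. Together these show that equivalent codes give multisets in the same $\PGL(hk,q)$-orbit, and that different generator matrices give the same orbit.

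Conversely, suppose the multisets $\cU,\cU'$ of two codes lie in one orbit. I would apply the projectivity, which amounts to changing generator matrix, so that $\pi'_{j}=\pi_{\theta(j)}$ as subspaces. This gives column spaces $\mathrm{col}(G'_j)=\mathrm{col}(G_{\theta(j)})$.

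The main obstacle sits here, and it concerns the case $\dim\pi_j<h-1$. In that case the column space does not pin down $G_j$ up to right multiplication by an invertible matrix, because $G_j$ and $G'_j$ could have different dependencies among their columns. I would handle this as follows. If $G_j$ and $G'_j$ have the same column space and the same rank, then $G'_j=G_j\Phi$ for some $\Phi\in\GL(h,q)$; this is standard, since both are maps $\F_q^h\to W$ onto the same space $W$, and one can match their kernels by a change of basis of $\F_q^h$. Rank equals $\dim\pi_j+1$ as a vector dimension, so it is determined by the subspace. Then $\Phi$ defines an $\F_q$-automorphism $\phi_j$ of $\F_{q^h}$ carrying the $j$-th coordinate of one code to the other.

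With the $\phi_j$ and $\theta$ in hand, the code generated by $G'$ equals the image of $\C$, since generated row spaces correspond. The correspondence is then bijective on classes. Surjectivity is the reverse construction already described in the text. Finally, the condition on the parameters $n$ and $d$ follows from the preceding theorem. That theorem is stated for the code and is invariant under both operations, because the hyperplane counts are preserved by projectivities.
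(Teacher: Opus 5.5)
Your proof is correct. The paper gives no argument for this statement; it imports it as Result~3.3 of the cited reference. The surrounding text only records two easy facts: $\pi_j$ does not depend on $\boldsymbol{\omega}$, and replacing $G$ by $MG$ moves $\cU$ by an element of $\PGL(hk,q)$.

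Your write-up is the standard self-contained verification. In particular, it supplies the direction the paper leaves entirely to the citation: multisets in the same orbit come from $\F_q$-linearly equivalent codes. The lemma you rely on is exactly what is needed: two matrices in $\F_q^{hk\times h}$ with the same column space differ by a right factor in $\GL(h,q)$. It holds with no extra hypothesis, because equal column spaces already force equal ranks, so the case $\dim\pi_j<h-1$ is not really an obstacle. Your description of $\pi_j$ through the kernel of the $j$-th coordinate map also gives independence from the choice of $\boldsymbol{\omega}$ for free.

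One point should be made explicit in the surjectivity step. The matrix $G$ rebuilt from a multiset has $\F_q$-independent rows, so its row space has size $q^{hk}$, precisely because the members of $\cU$ span $\PG(hk-1,q)$. This is guaranteed by the hyperplane condition with $d\ge 1$: a relation $xG=\mathbf{0}$ with $x\neq \mathbf{0}$ would put every $\pi_j$ in the hyperplane $x^{\perp}$.
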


\noindent For the case $h=1$, the correspondence reduces to the classical connection between linear Hamming-metric codes and multiset of points in a projective space. \\

\noindent We now turn our attention to MDS codes.  
It is a classical result that linear MDS $(n,q^k)_q$ codes correspond to arcs in $\PG(k-1,q)$. Using the geometric description of additive codes, this correspondence can be extended to provide the geometric counterpart of additive MDS codes.

\begin{theorem} \label{th:MDSconnectionpseudo-arc}
    Let $\cC$ be an additive $(n, q^{kh}, d)_{q^h}$ code which is linear over $\F_q$. Then the set of subspaces obtained from a generator matrix $G$ of $\cC$ is a pseudo-arc in $\PG(hk-1, q)$ if and only if $\cC$ is an MDS code.
\end{theorem}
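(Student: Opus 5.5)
The plan is to read the statement off the preceding theorem, which characterises the minimum distance through hyperplane incidences. Write $\cU=\{\pi_1,\ldots,\pi_n\}$ for the multiset of subspaces obtained from $G$. The argument has three steps: (i) relate the MDS condition $d=n-k+1$ to ``every hyperplane contains at most $k-1$ elements of $\cU$''; (ii) show that this hyperplane condition forces every $\pi_j$ to have projective dimension exactly $h-1$; (iii) identify the resulting condition with the pseudo-arc property.

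For step (i), since $\dim_{\F_q}\cC = hk$, the Singleton bound $q^{hk}\le q^{h(n-d+1)}$ gives $d\le n-k+1$. By the previous theorem, $n-d$ is exactly the maximum number of elements of $\cU$ lying in a common hyperplane of $\PG(hk-1,q)$. Hence $\cC$ is MDS if and only if every hyperplane contains at most $k-1$ of the $\pi_j$.

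For step (ii), assume every hyperplane contains at most $k-1$ elements. If some $\pi_j$ had dimension less than $h-1$, choose $k-1$ further elements. Their span together with $\pi_j$ has vector dimension at most $(h-1)+h(k-1)<hk$, so it lies in a hyperplane. That hyperplane contains $k$ elements, which is a contradiction. The same dimension count handles repeated elements of the multiset. This requires $n\ge k$, which holds for a nondegenerate code of $\F_q$-dimension $hk$. Consequently all $\pi_j$ are $(h-1)$-dimensional and pairwise distinct.

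For step (iii), observe that $k$ subspaces of dimension $h-1$ span $\PG(hk-1,q)$ if and only if they do not all lie in a common hyperplane. So ``no hyperplane contains $k$ elements'' is exactly the statement that any $k$ of the $\pi_j$ span the whole space, that is, that $\cU$ is a pseudo-arc. Conversely, if $\cU$ is a pseudo-arc, then no hyperplane contains $k$ of its members, so $n-d\le k-1$, and combined with Singleton this gives $d=n-k+1$.

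I expect the main subtlety to be step (ii): making sure that a degenerate column, meaning a $\pi_j$ of smaller dimension or a repeated subspace, really produces a hyperplane with $k$ elements. The dimension count above settles it. The rest is bookkeeping with the hyperplane characterisation of $d$ from the previous theorem.
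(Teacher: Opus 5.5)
Your proof is correct, and it follows the route the paper implicitly intends. The paper gives no proof of its own: it quotes this result from \cite{ball2023additive} as a direct consequence of the preceding hyperplane characterisation of $n-d$, combined with the Singleton bound, which is exactly how you argue. One small caveat: the pairwise-distinctness claim in your step (ii) needs $k\ge 2$, since for $k=1$ every $\pi_j$ equals the whole space $\PG(h-1,q)$, but this does not affect the equivalence.
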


\subsection{The pseudo-arc \texorpdfstring{$\mathcal{P}_{h,k,q}$}{Phkq} and additive MDS codes} \label{sec:correspondenceadditive}

In view of Theorem \ref{th:MDSconnectionpseudo-arc}, the pseudo-arc $\mathcal{P}_{h,k,q}$ constructed in \Cref{th:explicitconstructionpseudoarc} correspond to additive MDS codes.  
In this subsection, we show that these additive MDS codes coincide precisely with the family of additive MDS codes recently introduced in \cite{neri2025skew}.  
Exploiting the geometric properties established for these pseudo-arcs, we then prove that the corresponding codes are not equivalent to any linear MDS code.\\

\noindent We start by recalling the additive MDS codes introduced in \cite{neri2025skew}.

\begin{definition}
Let $\boldsymbol{\alpha} = (\alpha_1,\ldots,\alpha_n)$, where $\alpha_1,\ldots,\alpha_n$ are distinct elements of $\Lambda_{h,q}$.  
For any integer $1 \leq k < n$, define the code
\begin{equation} \label{eq:sublinearanalogue}
\mathcal{S}_{h,k,q}(\boldsymbol{\alpha})
   := \bigl\{\, (f(\alpha_1),\ldots,f(\alpha_n)) : f(x) \in \F_q[x],\ \deg(f) < hk \,\bigr\} 
   \subseteq \F_{q^h}^n.
\end{equation}
\end{definition}
\noindent The codes $\mathcal{S}_{h,k,q}(\boldsymbol{\alpha})$ can be regarded as the \emph{sublinear analogue} of Reed--Solomon codes.  
Indeed, they are $\F_q$-linear MDS codes over $\F_{q^h}$, and can be obtained as subcodes of classical Reed--Solomon codes over $\F_{q^h}$ of dimension $hk$, evaluated on subsets of $\Lambda_{h,q}$. As proved in \cite[Theorem 5.17]{neri2025skew}, the code $\mathcal{S}_{h,k,q}(\boldsymbol{\alpha})$ is an $\F_q$-linear $(n,q^{hk},n-k+1\,)_{q^h}$
code, and hence an $\F_q$-linear MDS code in $\F_{q^h}^{n}$.  
The maximum length occurs when $n = |\Lambda_{h,q}|$. Note that a generator matrix of $\mathcal{S}_{h,k,q}(\boldsymbol{\alpha})$ is given by
\begin{equation} \label{eq:generatormatrixS}
G_{h,k,q}(\boldsymbol{\alpha})=
\begin{bmatrix}
1 & 1 & \cdots & 1\\
\alpha_1 & \alpha_2 & \cdots & \alpha_n\\
\alpha_1^{2} & \alpha_2^{2} & \cdots & \alpha_n^{2}\\
\vdots & \vdots & \ddots & \vdots\\
\alpha_1^{\,hk-1} & \alpha_2^{\,hk-1} & \cdots & \alpha_n^{\,hk-1}
\end{bmatrix} \in \F_{q^h}^{hk \times n}.
\end{equation}

\begin{proposition} \label{prop:descriptionhsystemS}
Let $\boldsymbol{\alpha} = (\alpha_1,\ldots,\alpha_n)$, where $\alpha_1,\ldots,\alpha_n$ are the distinct elements of $\Lambda_{h,q}$.  
For any integer $1 \leq k < n$, the set of subspaces obtained from the generator matrix $G_{k,h}(\boldsymbol{\alpha})$
coincides with the pseudo-arc $\mathcal{P}_{h,k,q}$ as in \eqref{eq:pseudopkhq}.
\end{proposition}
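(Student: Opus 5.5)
We work column by column. Fix $j\in\{1,\ldots,n\}$ and an $\F_q$-basis $\{\omega_1,\ldots,\omega_h\}$ of $\F_{q^h}$. The $j$-th column of $G_{h,k,q}(\boldsymbol{\alpha})$ is $\mathbf{v}_j=(1,\alpha_j,\ldots,\alpha_j^{hk-1})^{\top}$, which is the coordinate vector of $P_{\alpha_j}$. Write $\mathbf{v}_j=G_j\boldsymbol{\omega}$ with $G_j\in\F_q^{hk\times h}$. We must show that the $\F_q$-column space of $G_j$, viewed projectively in $\Sigma$, equals $X(P_{\alpha_j})=X^*(P_{\alpha_j})\cap\Sigma$. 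The overall plan is to show that the $\F_{q^h}$-span of the columns of $G_j$ is $X^*(P_{\alpha_j})$, and then to invoke Lemma~\ref{lm:subspacesubgeo}.

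\textbf{Step 1: a basis of $X^*(P_{\alpha_j})$ in terms of $G_j$.} Apply the Frobenius map $\Psi^s$ entrywise. Since $G_j$ has entries in $\F_q$,
\[
\mathbf{v}_j^{q^s}=G_j\,\boldsymbol{\omega}^{q^s},\qquad s=0,\ldots,h-1.
\]
Collect these into a single matrix identity
\[
\bigl(\mathbf{v}_j,\mathbf{v}_j^{q},\ldots,\mathbf{v}_j^{q^{h-1}}\bigr)=G_j\,M_\omega,
\]
where $M_\omega=\bigl(\omega_i^{q^s}\bigr)_{i,s}$ is the $h\times h$ Moore matrix of the basis. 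By \cite[Lemma 3.51]{lidl1997finite}, $M_\omega$ is invertible precisely because $\omega_1,\ldots,\omega_h$ are $\F_q$-independent. Therefore the $\F_{q^h}$-column space of $G_j$ coincides with the span of $P_{\alpha_j},P_{\alpha_j}^{\Psi},\ldots,P_{\alpha_j}^{\Psi^{h-1}}$, which is $X^*(P_{\alpha_j})$.

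By Lemma~\ref{lem:characimaginarypoints}, $\alpha_j\in\Lambda_{h,q}$ is an imaginary point, so $X^*(P_{\alpha_j})$ has projective dimension $h-1$. It follows that $G_j$ has rank $h$ over $\F_{q^h}$, and hence also over $\F_q$, since the rank of a matrix with entries in $\F_q$ does not change under field extension.

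\textbf{Step 2: passing to $\Sigma$.} Let $W_j$ be the $\F_q$-column space of $G_j$. Its points lie in $\Sigma$ and also in $X^*(P_{\alpha_j})$, so $W_j\subseteq X(P_{\alpha_j})$. Both are $(h-1)$-dimensional: $W_j$ by Step 1, and $X(P_{\alpha_j})$ by Lemma~\ref{lm:subspacesubgeo}, since $X^*(P_{\alpha_j})$ is $\Psi$-invariant. Hence $\pi_j=W_j=X(P_{\alpha_j})$.

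As $j$ ranges over $1,\ldots,n$, the $\alpha_j$ range over all of $\Lambda_{h,q}$. The multiset $\{\pi_1,\ldots,\pi_n\}$ is therefore exactly $\mathcal{P}_{h,k,q}$; it has no repetitions because distinct $\alpha$'s give disjoint spaces, by Theorem~\ref{th:explicitconstructionpseudoarc}.

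\textbf{Main obstacle.} The only delicate point is the bookkeeping in Step 1. The identity must be applied with the Frobenius acting on $\boldsymbol{\omega}$ while $G_j$ stays fixed, which is exactly where $G_j\in\F_q^{hk\times h}$ is used. The invertibility of the Moore matrix is what makes the two spans equal rather than merely one containing the other. Independence of the result from the choice of basis is already guaranteed by \cite[Remark 3.1]{adriaensen2023additive}.
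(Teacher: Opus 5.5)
Your proof is correct and is essentially the paper's argument. You write the $j$-th column as $G_j\boldsymbol{\omega}$, apply the Frobenius (using $G_j\in\F_q^{hk\times h}$) to place $P_{\alpha_j},P_{\alpha_j}^{\Psi},\ldots,P_{\alpha_j}^{\Psi^{h-1}}$ in the $\F_{q^h}$-span of the columns of $G_j$, and then intersect with $\Sigma$. The paper does the same with a normal basis but writes out only this inclusion, leaving the dimension count implicit; your invertible Moore matrix and the explicit count in Step 2 give the equality directly.

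One small caveat: your closing claim that distinct $\alpha$'s give disjoint spaces needs $k\ge 2$, since for $k=1$ every $X(P_\alpha)$ is all of $\Sigma$. The claim is not needed, because both sides are families indexed by $\Lambda_{h,q}$.
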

\begin{proof}
A generator matrix for $\mathcal{S}_{h,k,q}(\boldsymbol{\alpha})$ is $G = G_{h,k,q}(\boldsymbol{\alpha})$ as in \eqref{eq:generatormatrixS}.  
Let $\omega$ be a normal element of the extension $\F_{q^h}/\F_q$, and set $\boldsymbol{\omega} = (\omega, \omega^q, \ldots, \omega^{q^{h-1}})^{\top}$.  
The $j$-th column of $G$ can be written as
\begin{equation} \label{eq:sfolderjcolumns}
\begin{pmatrix}
1 \\
\alpha_j \\
\vdots \\
\alpha_j^{hk-1}
\end{pmatrix}
= G_j \boldsymbol{\omega},
\end{equation}
for some $G_j \in \F_q^{hk \times h}$. Keeping the notation of \Cref{sec:explicitconstruction}, recall that
\[
X^{*}(P_{\alpha_j}) = \langle P_{\alpha_j}, P_{\alpha_j}^{\Psi}, \ldots, P_{\alpha_j}^{\Psi^{h-1}} \rangle_{\F_{q^h}},
\]
where $P_{\alpha_j}$ is as in \eqref{eq:formimaginarypoints}
and note that
\[
P_{\alpha_j}^{\Psi^i} = \langle (1, \alpha_j^{q^i}, (\alpha_j^2)^{q^i}, \ldots, (\alpha_j^{hk-1})^{q^i}) \rangle_{\F_{q^h}}.
\]
If $G_j^1,\ldots,G_j^h$ denote the columns of $G_j$, by \eqref{eq:sfolderjcolumns}, we have
\[
P_{\alpha_j} \in \langle (G_j^1)^{\top},\ldots,(G_j^h)^{\top} \rangle_{\F_{q^h}}.
\]
Moreover, for every $i$,
\[
\begin{pmatrix}
1 \\
\alpha_j^{q^i} \\
(\alpha_j^2)^{q^i} \\
\vdots \\
(\alpha_j^{hk-1})^{q^i}
\end{pmatrix}
= G_j \begin{pmatrix} \omega^{q^i} \\  \omega^{q^{i+1}} \\ \vdots \\ \omega^{q^{h-1}} \\ \omega \\ \vdots \\ \omega^{q^{i-1}}\end{pmatrix},
\]
so that $P_{\alpha_j}^{\Psi^i} \in \langle (G_j^1)^{\top},\ldots,(G_j^h)^{\top} \rangle_{\F_{q^h}}$ as well. Therefore,
\[
X(P_{\alpha_j}) = X^{*}(P_{\alpha_j}) \cap \Sigma = \PG(U_j,\F_q),
\]
where $U_j$ is the $\F_q$-column space generated by the columns of $G_j$ and so a pseudo-arc associated with $\mathcal{S}_{h,k,q}(\boldsymbol{\alpha})$ is $\cP_{h,k,q}$ provided in \Cref{th:explicitconstructionpseudoarc}.
\end{proof}

The question of determining whether the codes $\mathcal{S}_{h,k,q}(\boldsymbol{\alpha})$ are equivalent to linear MDS codes remained open in \cite{neri2025skew}.  
In what follows, we answer this question in the negative by adopting a geometric approach.  
A geometric criterion for determining when an additive code is $\F_q$-equivalent to a linear code is recalled below.

\begin{proposition}[see \textnormal{\cite[Lemma 3.7]{adriaensen2023additive}}]
\label{prop:geometriccondnonequiv}
An $\F_q$-linear $(n,q^{hk},d)_{q^h}$ code $\cC$ is $\F_q$-equivalent to a linear code in $\F_{q^h}^n$ if and only if the set of subspaces obtained from a generator matrix $G$ of $\cC$ consists of $(h-1)$-dimensional projective subspaces contained in a Desarguesian $(h-1)$-spread of $\PG(hk-1,q)$.
\end{proposition}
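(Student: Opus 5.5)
This is a two-directional equivalence; I would prove both directions through the explicit column-to-subspace dictionary used in the proof of Proposition~\ref{prop:descriptionhsystemS}. Fix an $\F_q$-basis $\boldsymbol{\omega}$ of $\F_{q^h}$. Write each column of $G$ as $G_j\boldsymbol{\omega}$ with $G_j\in\F_q^{hk\times h}$, and let $\pi_j$ be the subspace spanned by the columns of $G_j$. A first observation is that $\pi_j$ has projective dimension exactly $h-1$ whenever the code is MDS of the relevant size. In general, one should note that nondegeneracy together with the Desarguesian hypothesis forces $\dim\pi_j=h-1$.

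($\Rightarrow$) Suppose $\cC$ is $\F_q$-equivalent to an $\F_{q^h}$-linear code $\cC'$. The maps $\phi_i$ and the permutation $\theta$ act on the columns. Replacing the $j$-th column by $\phi_j$ applied to it amounts to replacing $G_j$ by $G_jM_j$, with $M_j\in\GL(h,q)$ the matrix of $\phi_j$ in the basis $\boldsymbol{\omega}$. This leaves each $\pi_j$ unchanged and only permutes them. Hence the subspace sets of $\cC$ and $\cC'$ coincide up to $\PGL(hk,q)$.

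Now consider the linear code $\cC'$. It has an $\F_{q^h}$-generator matrix $G'\in\F_{q^h}^{k\times n}$. As an $\F_q$-generator matrix we may take the $hk\times n$ matrix whose rows are $\omega_s\,\mathbf{g}'_r$. Each column $\mathbf{v}_j\in\F_{q^h}^k$ of $G'$ then produces the subspace $\{\lambda\mathbf{v}_j:\lambda\in\F_{q^h}\}$, read in $\F_q^{hk}$ via coordinates in $\boldsymbol\omega$. This is exactly the spread element of the Desarguesian spread $\{\langle \mathbf v\rangle_{\F_{q^h}}\}$ obtained by field reduction of $\PG(k-1,q^h)$. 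That such field-reduced spreads are Desarguesian, i.e.\ $\PGL$-equivalent to $\cD(\Theta)$, is the standard identification of Section~\ref{sec:Des}; I would verify it directly with the model $\Sigma'$ and the director space $\Theta$. Since the subspace set is determined up to $\PGL(hk,q)$ independently of the generator matrix, this direction follows.

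($\Leftarrow$) Suppose the $\pi_j$ are $(h-1)$-spaces lying in a Desarguesian spread $\cD$. Apply an element of $\PGL(hk,q)$, which corresponds to changing the generator matrix, so that $\cD$ becomes the field-reduction spread of $\F_{q^h}^k$. Then, for each $j$, $\pi_j=\{\lambda \mathbf v_j\}$ for some nonzero $\mathbf v_j\in\F_{q^h}^k$. The column spaces of $G_j$ and of the $\F_q$-matrix $R(\mathbf v_j)$ representing $\lambda\mapsto\lambda\mathbf v_j$ agree, so $G_j=R(\mathbf v_j)M_j$ for some $M_j\in\GL(h,q)$. Reversing the first computation, $\cC$ is obtained from the linear code generated by the columns $\mathbf v_j$ by applying the $\F_q$-automorphisms $\phi_j$ defined by the $M_j$.

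The main obstacle is the bookkeeping in this last step. One must make precise that the $j$-th coordinate of a codeword $\mathbf m G$ is an $\F_q$-linear function of $\mathbf m$ whose kernel is the hyperplane dual to $\pi_j$. One must also check that two $\F_q$-linear maps $\F_q^{hk}\to\F_{q^h}$ are related by an automorphism of $\F_{q^h}$ if and only if they have the same kernel, equivalently the same image subspace $\pi_j$ under the duality convention used. I would handle this by identifying the coordinate map with $G_j^{\top}$ and applying a rank/kernel argument.
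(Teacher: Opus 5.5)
The paper does not prove this statement: it quotes it from \cite[Lemma 3.7]{adriaensen2023additive} and uses it as a black box in the proof of Proposition~\ref{prop:thecodeisnotlinear}. Your field-reduction argument is a correct, self-contained proof.

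In ($\Rightarrow$), two facts make the subspace multiset an invariant of the $\F_q$-equivalence class: $\phi_j$ replaces $G_j$ by $G_jM_j$, and a change of generator matrix acts through $\PGL(hk,q)$. For the $\F_q$-generator matrix of a nondegenerate linear code with rows $\omega_s\mathbf g'_r$, each $\pi_j$ is then the field reduction of $\langle\mathbf v_j\rangle_{\F_{q^h}}$. In ($\Leftarrow$), moving $\cD$ to the field-reduction spread and writing $G_j=R(\mathbf v_j)M_j$ produces the $\phi_j$ explicitly.

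Compared with the citation, your version shows exactly where two assumptions enter. Nondegeneracy (every $\mathbf v_j\neq\mathbf 0$) is used in ($\Rightarrow$). The hypothesis $\dim\pi_j=h-1$ is used in ($\Leftarrow$); it excludes, e.g., a coordinate taking values only in $\F_q$. The remark about MDS codes in your first paragraph plays no role and can be dropped.

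Some details to correct when you write it out:

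\begin{enumerate}
\item The kernel of $\mathbf m\mapsto\mathbf mG_j\boldsymbol\omega$ is $\{\mathbf m:\mathbf mG_j=\mathbf 0\}$, the orthogonal complement of the vector subspace underlying $\pi_j$. It has codimension $h$, so it is not a hyperplane. Your ``same kernel'' criterion is nonetheless correct.
\item With rows $\omega_s\mathbf g'_r$ and columns expanded in the same basis $\boldsymbol\omega$, the column space of $G_j$ is $\{\lambda\mathbf v_j:\lambda\in\F_{q^h}\}$ written in coordinates with respect to the trace-dual basis of $\boldsymbol\omega$, not $\boldsymbol\omega$ itself.
\item Correspondingly, in ($\Leftarrow$) the code whose columns are $R(\mathbf v_j)\boldsymbol\omega$ is in general not literally the linear code with columns $\mathbf v_j$. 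It is the image of that linear code under one fixed $\F_q$-linear automorphism $\psi$ of $\F_{q^h}$ applied in every coordinate. It is the linear code itself when the basis defining $R$ and the identification $\F_q^{hk}\cong\F_{q^h}^k$ are matched through the trace form.
\end{enumerate}

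None of these affects the conclusion. Every fixed $\F_q$-linear identification carries the field-reduction spread to a Desarguesian spread, and $\psi$ is absorbed into the $\phi_j$.
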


Since, by \Cref{th:ourarcisnondesarguesian}, the pseudo-arc $\mathcal{P}_{h,k,q}$ is non-Desarguesian, we obtain the following result.

\begin{proposition} \label{prop:thecodeisnotlinear}
Assume that $q \ge hk+1$ and let $\boldsymbol{\alpha}=(\alpha_1,\ldots,\alpha_n)$, where
$\alpha_1,\ldots,\alpha_n$ are all the distinct elements of $\Lambda_{h,q}$.
For any integer $1 \le k < n$, the code
$
\mathcal{S}_{h,k,q}(\boldsymbol{\alpha}) \subseteq \F_{q^h}^n,
$
defined in \eqref{eq:sublinearanalogue}, is not $\F_q$-equivalent to any linear MDS code in
$\F_{q^h}^n$.
\end{proposition}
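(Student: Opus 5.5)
The plan is to argue by contradiction, combining the geometric criterion of Proposition~\ref{prop:geometriccondnonequiv} with the non-Desarguesian property established in Theorem~\ref{th:ourarcisnondesarguesian}. Suppose that $\mathcal{S}_{h,k,q}(\boldsymbol{\alpha})$ is $\F_q$-equivalent to some linear MDS code $\cC'\subseteq\F_{q^h}^n$.

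First I fix the generator matrix $G=G_{h,k,q}(\boldsymbol{\alpha})$ of \eqref{eq:generatormatrixS}. Since $\boldsymbol{\alpha}$ runs over all of $\Lambda_{h,q}$, Proposition~\ref{prop:descriptionhsystemS} says that the multiset of subspaces of $\PG(hk-1,q)$ read off from $G$ is exactly $\cP_{h,k,q}$. Moreover, each element $X(P_{\alpha_j})$ has projective dimension exactly $h-1$: by Lemma~\ref{lem:characimaginarypoints} each $P_{\alpha_j}$ is imaginary, and then Lemma~\ref{lm:subspacesubgeo} applies.

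Next I apply Proposition~\ref{prop:geometriccondnonequiv}. If $\mathcal{S}_{h,k,q}(\boldsymbol{\alpha})$ is $\F_q$-equivalent to a linear code, then the subspaces obtained from $G$ all lie in a single Desarguesian $(h-1)$-spread of $\PG(hk-1,q)$. One point needs checking here. The criterion is phrased for an arbitrary generator matrix, while the associated multiset is only defined up to the $\PGL(hk,q)$-action, because $G'=MG$ moves the subspaces by the projectivity induced by $M$. Since the image of a Desarguesian spread under a projectivity is again Desarguesian, the conclusion does not depend on the choice of $G$. So we may take the representative $\cP_{h,k,q}$, and we conclude that every member of $\cP_{h,k,q}$ lies in one Desarguesian $(h-1)$-spread.

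This says exactly that $\cP_{h,k,q}$ is Desarguesian, which contradicts Theorem~\ref{th:ourarcisnondesarguesian}; that theorem holds under the standing hypothesis $q\ge hk+1$. Hence $\mathcal{S}_{h,k,q}(\boldsymbol{\alpha})$ is not $\F_q$-equivalent to any linear code, and in particular not to any linear MDS code. The MDS property of $\mathcal{S}_{h,k,q}(\boldsymbol{\alpha})$ itself, from \cite[Theorem 5.17]{neri2025skew} or from Theorem~\ref{th:MDSconnectionpseudo-arc}, is not needed for the argument.

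The only delicate step is the compatibility between the subspaces produced from $G$ via a normal basis and the sets $X(P_\alpha)$, together with the invariance of the construction under the choice of basis and of generator matrix. Both are already handled by Proposition~\ref{prop:descriptionhsystemS} and \cite[Remark 3.1]{adriaensen2023additive}, so the proof reduces to citing these results in the order above.
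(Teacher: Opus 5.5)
Your proposal is correct and follows the paper's proof exactly: Proposition~\ref{prop:descriptionhsystemS}, then Theorem~\ref{th:ourarcisnondesarguesian}, then Proposition~\ref{prop:geometriccondnonequiv}. Your extra checks, that each $X(P_{\alpha_j})$ has dimension exactly $h-1$ and that Desarguesian spreads form a $\PGL(hk,q)$-invariant family so the choice of generator matrix is irrelevant, are harmless refinements. One caveat applies equally to the paper: the argument tacitly needs $k\ge 2$ (and $h\ge 2$). For $k=1$ every member of $\cP_{h,1,q}$ is all of $\PG(h-1,q)$ and $\{\PG(h-1,q)\}$ is itself a Desarguesian $(h-1)$-spread, so Theorem~\ref{th:ourarcisnondesarguesian} fails there, and indeed $\mathcal{S}_{h,1,q}(\boldsymbol{\alpha})$ is $\F_q$-equivalent to the repetition code.
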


\begin{proof}
By \Cref{prop:descriptionhsystemS}, 
the set of subspaces obtained from the generator matrix $G_{h,k,q}(\boldsymbol{\alpha})$ is the pseudo-arc $\mathcal{P}_{h,k,q}$.
By Theorem \ref{th:ourarcisnondesarguesian}, the pseudo-arc $\mathcal{P}_{h,k,q}$ is
non-Desarguesian.
Therefore, by Proposition \ref{prop:geometriccondnonequiv}, the assertion follows
immediately.
\end{proof}
\subsection{Extending the code \texorpdfstring{$\mathcal{S}_{h,k,q}(\boldsymbol{\alpha})$}{Shkq} } \label{sec:codeextension}

As shown in \Cref{sec:extending}, the pseudo-arc $\mathcal{P}_{h,k,q}$ can be further extended to a larger pseudo-arc.
An immediate consequence on the coding-theoretic side is that the corresponding codes
$\mathcal{S}_{h,k,q}(\boldsymbol{\alpha})$ can be extended to longer additive MDS codes.
In this section, we provide an explicit description of these extended codes.

\begin{definition}
Let $\boldsymbol{\alpha} = (\alpha_1,\ldots,\alpha_n)$, where $\alpha_1,\ldots,\alpha_n$ are distinct elements of $\Lambda_{h,q}$, and let $\mathbf{t}=(t_1,\ldots,t_m)$, where $t_1,\ldots,t_m$ are distinct elements of $\F_q$.
Let $\omega$ be a normal element of $\F_{q^h}$ over $\F_q$.
For any integer $1 \le k < n$, define the code
\begin{equation} \label{eq:extendedcode}
\begin{aligned}
\mathcal{S}_{h,k,q}(\boldsymbol{\alpha},\mathbf{t},\infty)
   := \bigl\{\,(&f(\alpha_1),\ldots,f(\alpha_n),
   \sum_{i=0}^{h-1} f^{(i)}(t_1)\,\omega^{q^i}, \ldots,
   \sum_{i=0}^{h-1} f^{(i)}(t_m)\,\omega^{q^i}, f(\overline{\infty})) \\
   & :
   f(x)\in\F_q[x],\ \deg(f)<hk \,\bigr\}
   \subseteq \F_{q^h}^{\,n+m+1}.
\end{aligned}
\end{equation}
where, if $f(x)=\sum_{j=0}^{hk-1} f_j x^j$, we define
$
f(\overline{\infty}) := \sum_{i=0}^{h-1} f_{hk-h+i}\,\omega^{q^i}$.
\end{definition}

The code $\mathcal{S}_{h,k,q}(\boldsymbol{\alpha},\mathbf{t},\infty)$ extends
$\mathcal{S}_{h,k,q}(\boldsymbol{\alpha})$ by adding evaluation points corresponding
to derivatives and to the point at infinity.
We prove that the resulting code is still an additive MDS code by showing that it
is precisely the code associated with the extended pseudo-arc
$\mathcal{P}_{h,k,q} \cup \mathcal{O}_{h-1}$ as defined in \Cref{th:extension_pseudoarc}.

\begin{theorem}
    The code $\mathcal{S}_{h,k,q}(\boldsymbol{\alpha},\mathbf{t},\infty) \subseteq \F_{q^h}^{n+m+1}$ defined as in \eqref{eq:extendedcode} is an additive MDS code. 
\end{theorem}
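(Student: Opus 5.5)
The plan is to show that the set of subspaces read off from a generator matrix of $\mathcal{S}_{h,k,q}(\boldsymbol{\alpha},\mathbf{t},\infty)$ is contained in the pseudo-arc $\cP_{h,k,q}\cup\mathcal{O}_{h-1}$ of \Cref{th:extension_pseudoarc}, and then apply \Cref{th:MDSconnectionpseudo-arc}. Any subset of a pseudo-arc with at least $k$ elements is again a pseudo-arc; here $n>k$, so this applies to the $n+m+1$ subspaces we obtain.

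\emph{Setting up the generator matrix.} Write $f(x)=\sum_{j=0}^{hk-1}f_jx^j$ with $(f_0,\ldots,f_{hk-1})\in\F_q^{hk}$. This gives an $\F_q$-isomorphism from the space of such $f$ onto the code: it is injective because $f$ is already determined by its values at the $\alpha_r$, since $n\ge k$ forces at least $hk$ distinct conjugate roots. So $\dim_{\F_q}\mathcal{S}_{h,k,q}(\boldsymbol{\alpha},\mathbf{t},\infty)=hk$. A generator matrix $G$ has rows indexed by the monomials $x^j$, and each column is the image of the standard basis.

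\emph{Reading off the subspaces, column by column.}
\begin{enumerate}
\item For a column $\alpha_r$, the entry is $f(\alpha_r)=\sum_j f_j\alpha_r^j$. Exactly as in \Cref{prop:descriptionhsystemS}, using the normal basis $\omega,\ldots,\omega^{q^{h-1}}$, the associated subspace is $X(P_{\alpha_r})\in\cP_{h,k,q}$.
\item For a column $t_s$, the entry is $\sum_{i=0}^{h-1}\Big(\sum_j f_j\,\tfrac{d^i}{dt^i}t^j\big|_{t_s}\Big)\omega^{q^i}$. In the notation of the $\F_q$-basis $\boldsymbol{\omega}=(\omega,\ldots,\omega^{q^{h-1}})^\top$, the column is $G_j\boldsymbol{\omega}$, where $G_j$ is precisely the transpose of the matrix $A_{t_s}$ in \eqref{eq:basisosculating}: its $i$-th column is the $i$-th derivative vector of $\nu_{hk,q}$ at $t_s$. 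The column space of $G_j$ is therefore $\Osc_{h-1}(\nu_{hk,q}(1,t_s))$.
\item For the last column, $f(\overline{\infty})=\sum_i f_{hk-h+i}\omega^{q^i}$. Here $G_j$ has columns $e_{hk-h+i}$, $i=0,\ldots,h-1$. Its column space is the span of the last $h$ standard basis vectors, which is the row space of $A_\infty$ in \eqref{eq:basisosculatinginfty}, namely $\Osc_{h-1}(\nu_{hk,q}(0,1))$.
\end{enumerate}

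\emph{Conclusion.} The multiset $\cU$ consists of distinct members of $\cP_{h,k,q}\cup\mathcal{O}_{h-1}$, each of projective dimension exactly $h-1$, since the osculating matrices have full rank as $p\ge h$. By \Cref{th:extension_pseudoarc}, $\cU$ is a pseudo-arc, and \Cref{th:MDSconnectionpseudo-arc} gives that the code is MDS. We assume the hypotheses of \Cref{th:extension_pseudoarc}, namely $\mathrm{char}(\F_q)\ge h$ and $q$ large.

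The main obstacle is the bookkeeping in the second step. The paper's derivatives include factorial factors, with the $i$-th row of $A_t$ being $\prod_{\ell}(j-\ell)t^{j-i}$, and the code definition uses $f^{(i)}$. These must match so that the column space is exactly the osculating space and not merely contained in a space spanned by different vectors. Since $f^{(i)}(t_s)$ is literally the inner product of the coefficient vector with the $i$-th row of $A_{t_s}$, they do match. The only remaining check is that the $h$ columns of $G_j$ are $\F_q$-independent, which follows from $p\ge h$.
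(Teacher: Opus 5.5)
Your proposal is correct and follows the paper's own proof. Both write the generator matrix as $\bigl(G_{h,k,q}(\boldsymbol{\alpha}) \mid A_{t_1}^{\top}\boldsymbol{\omega} \mid \cdots \mid A_{t_m}^{\top}\boldsymbol{\omega} \mid A_{\infty}^{\top}\boldsymbol{\omega}\bigr)$, identify the resulting subspaces with members of $\mathcal{P}_{h,k,q}\cup\mathcal{O}_{h-1}$, and conclude via \Cref{th:extension_pseudoarc} and \Cref{th:MDSconnectionpseudo-arc}; your extra checks that the code has $\F_q$-dimension $hk$ and that one obtains a sub-collection (not necessarily all) of that pseudo-arc are small refinements the paper leaves implicit.
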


\begin{proof}
By considering as an $\F_q$-basis of the space of polynomials in $\F_q[x]$ of degree less
than $hk$ the set $
\{1,x,x^2,\ldots,x^{hk-1}\},$
we can explicitly describe a generator matrix of
$\mathcal{S}_{h,k,q}(\boldsymbol{\alpha},\mathbf{t},\infty)$. Set  $\boldsymbol{\omega} = (\omega, \omega^q, \ldots, \omega^{q^{h-1}})^{\top}$.
More precisely, a generator matrix is given by
\[
G = \bigl(\, G_{h,k,q}(\boldsymbol{\alpha}) \;\big|\; H_1 \;\big|\; \cdots \;\big|\; H_m \;\big|\; H_{\infty} \,\bigr),
\]
where $G_{h,k,q}(\boldsymbol{\alpha})$ is as in \eqref{eq:generatormatrixS},
$H_i = A_{t_i}^{\top}\boldsymbol{\omega}$ for $i=1,\ldots,m$, and
$H_{\infty} = A_{\infty}^{\top}\boldsymbol{\omega}$.
Here $A_{t_i}$ and $A_{\infty}$ are defined in
\eqref{eq:basisosculating} and \eqref{eq:basisosculatinginfty}, respectively. Hence, the set of subspaces obtained from the generator matrix $G$ of $\mathcal{S}_{h,k,q}(\boldsymbol{\alpha},\mathbf{t},\infty)$ is
$
\mathcal{P}_{h,k,q} \cup \mathcal{O}_{h-1}
$. The assertion follows by \Cref{th:extension_pseudoarc} and \Cref{th:MDSconnectionpseudo-arc}. 
\end{proof}

We remark that the maximum length of the code $\mathcal{S}_{h,k,q}(\boldsymbol{\alpha},\mathbf{t},\infty)$ is
\[
|\Lambda_{h,q}| + q + 1,
\]
and that, if $q \ge hk-1$, it is not $\F_q$-equivalent to any linear MDS code in
$\F_{q^h}^n$, as immediate consequence of Proposition \ref{prop:thecodeisnotlinear}.

\subsection{Further comments}

For small parameters, additive MDS codes that are not equivalent to linear MDS codes have been classified in \cite[Section 6]{ball2023additive}. In particular, the authors have proved that the longest $(n, 4^6, n-2)_{4^2}$ additive MDS code that is not equivalent
to a linear MDS code has parameters $(11, 4^6
,9)_{4^2}$. Moreover, the corresponding pseudo-arc $\cP$ is unique up to a collineation of $\PG(5, 4)$ and can be described as follows. Let $w$ be a primitive element of $\F_{4^2}$ such that $w^4 = w+1$. Then $e = w^5$ is a primitive element of $\F_{4}$ satisfying $e^2 = e+1$ and $\cP$ consists of the lines:
\begin{align*}
    & \ell_1 = \langle (1,0,0,0,0,0), (0,1,0,0,0,0) \rangle_{\F_4}, && \ell_2 = \langle (1,0,e,e,e,1), (0,1,e,1,1,1) \rangle_{\F_4}, \\
    & \ell_3 = \langle (1,0,1,0,1,0), (0,1,0,1,0,1) \rangle_{\F_4}, && \ell_{4} = \langle (1,0,0,e,1,e), (0,1,e,e^2,e,0) \rangle_{\F_4}, \\
    & \ell_5 = \langle (1,0,e,e^2,0,1), (0,1,e^2,e^2,1,e^2) \rangle_{\F_4}, 
    && \ell_6 = \langle (0,0,1,0,0,0), (0,0,0,1,0,0) \rangle_{\F_4}, \\
    & \ell_7 = \langle (0,0,0,0,1,0), (0,0,0,0,0,1) \rangle_{\F_4}, && \ell_8 = \langle (1,0,0,1,0,e), (0,1,1,e^2,e,e^2) \rangle_{\F_4}, \\
    & \ell_9 = \langle (1,0,1,e^2,e^2,1), (0,1,1,1,0,e) \rangle_{\F_4}, && \ell_{10} = \langle (1,0,0,1,0,e), (0,1,1,e^2,e,e^2) \rangle_{\F_4}, \\
    & \ell_{11} = \langle (1,0,e,0,e,e), (0,1,1,e,e^2,e) \rangle_{\F_4}. 
\end{align*}

Actually, the pseudo-arc $\cP$ can be described as in Theorem \ref{th:extension_pseudoarc}. Indeed, embed $\PG(5, 4)$ in $\PG(5, 4^2)$ and consider the set $\cN$ formed by the points 
\begin{align*}
    & L_1 = \langle (1,0,0,0,0,0)\rangle_{\F_{16}}, && L_2 = \langle (1,1,0,e^2,e^2,0) \rangle_{\F_{16}}, \\
    & L_3 = \langle (1,e,1,e,1,e) \rangle_{\F_{16}}, 
    && L_4 = \langle (1,e^2,1,0,0,e) \rangle_{\F_{16}}, \\
    & L_5 = \langle (0,1,e^2,e^2,1,e^2) \rangle_{\F_{16}}, && L_6 = \langle (0,0,1,w^4,0,0) \rangle_{\F_{16}}, \\
     & L_6^{\Psi} = \langle (0,0,1,w^6,0,0) \rangle_{\F_{16}}, && L_7 = \langle (0,0,0,0,1,w^3) \rangle_{\F_{16}}, \\  & L_7^{\Psi} = \langle (0,0,0,0,1,w^{12}) \rangle_{\F_{16}}, &&
     L_8 = \langle (1,w^7,w^8,w^5,w^{14},w^{12}) \rangle_{\F_{16}}, \\ & L_8^{\Psi} = \langle (1,w^{13},w^2,w^5,w^{11},w^3) \rangle_{\F_{16}}, &&  L_9 = \langle (1,w^2,w^8,w^4,w^{10},w^9) \rangle_{\F_{16}}, \\
     & L_9^{\Psi} = \langle (1,w^8,w^2,w,w^{10},w^6) \rangle_{\F_{16}}, && L_{10} = \langle (1,w^{11},w^{11},w^{13},w,w^9) \rangle_{\F_{16}}, \\  & L_{10}^{\Psi} = \langle (1,w^{14},w^{14},w^7,w^4,w^6) \rangle_{\F_{16}}, 
    && L_{11} = \langle (1,w,w^2,w^6,w^3,w^9) \rangle_{\F_{16}}, \\ & L_{11}^{\Psi} = \langle (1,w^4,w^8,w^9,w^12,w^6)\rangle_{\F_{16}}. && 
\end{align*}
Here $\Psi$ is the involutory collineation of $\PG(5, 4^2)$ fixing pointwise $\PG(5, 4)$.
The projectivity induced by the matrix (acting on the left) 
\begin{align*}
\begin{pmatrix}
    1 & e & 0 & 1 & 1 & e^2 \\
    e & 1 & 1 & 1 & e^2 & 0 \\
    e^2 & 1 & e & 0 & 1 & 1 \\
    1 & e^2 & 1 & e & 0 & 1 \\
    e & e^2 & 1 & e^2 & e & e^2 \\
    e^2 & e & 0 & e & e & 1 
\end{pmatrix}
\end{align*}
maps $\cN$ to the normal rational curve $\cN_{6, 4^2}$ consisting of the points
\begin{align*}
    \left\{\langle (1,t,t^2,t^3,t^4,t^5) \rangle_{\F_{16}} \mid t \in \F_{16}\right\} \cup \left\{\langle (0,0,0,0,0,1)\rangle_{\F_{16}}\right\}. 
\end{align*}
Hence $\cN$ is a normal rational curve of $\PG(5, 4^2)$. Moreover, the line $\ell_i$ is the intersection of the line $\langle L_i, L_i^{\Psi}\rangle_{\F_{16}}$ with $\PG(5, 4)$, if $i = 6, \dots, 11$; whereas the line $\ell_i$ is the intersection of the line tangent to $\cN$ at the point $L_i$ with $\PG(5, 4)$ in the case when $i = 1,2,3,4,5$.

\bigskip
{\footnotesize
\noindent\textit{Acknowledgments.}
The research was supported by the Italian National Group for Algebraic and Geometric Structures and their Applications (GNSAGA--INdAM).}

\bibliographystyle{abbrv}
\bibliography{biblio}

\noindent Francesco Pavese,\\
Dipartimento di Meccanica, Matematica e Management, \\
Politecnico di Bari, \\
Via Orabona 4, \\
70125 Bari, Italy \\
E-mail: francesco.pavese@poliba.it\\

\noindent Paolo Santonastaso,\\
Dipartimento di Meccanica, Matematica e Management, \\
Politecnico di Bari, \\
Via Orabona 4, \\
70125 Bari, Italy \\
E-mail: paolo.santonastaso@poliba.it

\end{document}